\def\NN{\mathbb{N}}
\def\RR{\mathbb{R}}
\def\O{\mathcal{O}}
\theoremstyle{plain}
\newtheorem{thm}{Theorem}
\newtheorem{lem}[thm]{Lemma}
\newtheorem{prop}[thm]{Proposition}
\theoremstyle{remark}
\newtheorem{rema}{Remark}
\theoremstyle{definition}
\newtheorem{defin}{Definition}
\title[Asymptotic variance for RWM chains in high dimensions]{
Asymptotic variance for Random walk Metropolis chains in high dimensions: 
logarithmic growth via the Poisson equation}
\author{Aleksandar Mijatovi\'{c}}
\address{Department of Statistics, University of Warwick, and The Alan Turing Institute, UK}
\email{aleks.mijatovic@gmail.com}
\author{Jure Vogrinc}
\address{Department of Statistics, University of Warwick, UK}
\email{Jure.Vogrinc@warwick.ac,uk}
\thanks{We thank Krys Latuszynski for insightful comments.
AM is supported in part by the EPSRC grant EP/P003818/1 and the Programme on Data-Centric Engineering  funded by
Lloyd's Register Foundation. JV is supported by President's PhD Scholarship of Imperial College London and by the EPSRC grants  EP/P002625/1 and EP/R022100/1}
\keywords{Asymptotic variance, Poisson equation for diffusions, variance reduction, 
scaling limits of Metropolis-Hastings chains, optimal Young's inequality, Berry-Esseen bounds, large deviations}
\subjclass[2000]{60J10, 60J22}
\begin{document}

\begin{abstract}
There are two ways of speeding up MCMC algorithms: (1) construct more
complex samplers that use gradient and higher order information about
the target and (2) design a control variate to reduce the asymptotic
variance. While the efficiency of (1) 
as a function of dimension has been studied extensively, this paper provides 
the first results linking the efficiency of (2) with dimension.
Specifically, we construct a control variate for a  
$d$-dimensional Random walk Metropolis chain with an IID target 
using the solution of the Poisson equation for the 
scaling limit in~\cite{RobertsGelmanGilks97}. 
We prove that 
the asymptotic variance of the corresponding estimator is bounded above by a multiple of 
$\log(d)/d$ over the spectral gap of the chain. 
The proof hinges on 
large deviations theory, optimal Young's inequality and
Berry-Esseen type bounds. 
Extensions of the result to non-product targets are discussed. 
\end{abstract}

\date{\today}

\maketitle

\section{Introduction}

Markov Chain Monte Carlo (MCMC) methods are designed to approximate 
expectations of high dimensional random vectors, see e.g.~\cite{Handbook, tierney}. 
It is hence important to understand how the efficiency of MCMC algorithms
scales with dimension. The optimal scaling literature, initiated by the seminal
paper~\cite{RobertsGelmanGilks97}, indicates that for the high-dimensional algorithms
it is the growth of the asymptotic variance with dimension that 
provides perhaps the most natural measure of efficiency for MCMC (see~\cite[Sections~1.2 and~2.2]{RobertsRosenthal01},
\cite[Sec.~3]{MALAScaling}).
For instance, for a product target, the asymptotic variance for the Random walk Metropolis (RWM) chain on
$\RR^d$ is 
heuristically of the order $\O(d)$  \cite{RobertsGelmanGilks97,RobertsRosenthal01}. 
Moreover, 
the asymptotic variances 
of the $d$-dimensional 
Metropolis-adjusted Langevin algorithm (MALA), 
Hamiltonian Monte Carlo  and the fast-MALA
are
$\O(d^{1/3})$ \cite{MALAScaling}, $\O(d^{1/4})$ \cite{HMCScaling} and $\O(d^{1/5})$ \cite{fMALA}, respectively.

This paper constructs a dimension-dependent estimator (see~\eqref{eq:CLT} below)
and proves a bound on its asymptotic variance, 
suggesting the order $\O(\log(d))$, for a RWM chain with an IID target.
The idea is to exploit the following facts:
(I) the law of the diffusion scaling limit for the RWM chain 
(as $d\to\infty$) from~\cite{RobertsGelmanGilks97}
is close (in the weak sense) to that of the law of the chain 
itself \textit{and} (II) the Poisson equation for the limiting Langevin diffusion
has an explicit solution. 
Following ideas from~\cite{MV},  
we construct and analyse the estimator using (I) and (II).

Specifically, let $\rho$ be a density on $\RR$ and 
$\rho_d(\mathbf{x}^d):=\prod_{i=1}^d\rho(x^d_i)$
the corresponding $d$-dimensional product density, 
where
$\mathbf{x}^d=(x_1^d,\dots,x_d^d)\in\RR^d$.
Let
$\mathbf{X}^d=\{\mathbf{X}^d_n\}_{n\in\NN}$
be the RWM chain converging to $\rho_d$
with the normal proposal with variance 
$l^2/d\cdot I_d$
(here 
$I_d\mathbf{x}^d=\mathbf{x}^d$, for all $\mathbf{x}^d\in\RR^d$, and 
$l$ a constant),  
analysed in~\cite{RobertsGelmanGilks97}.
If
$f(\mathbf{x}^d)$ depends only on the first coordinate $x_1^d$, 
then 
$\rho_d(f):=\int_{\RR^d} f(\mathbf{x}^d)\rho_d(\mathbf{x}^d)d\mathbf{x}^d= \int_\RR f(x)\rho(x)dx=:\rho(f)$.
Under appropriate conditions, 
the asymptotic variance $\sigma^2_{f,d}$ in the Central limit theorem (CLT) for the 
estimator
$\sum_{i=1}^n f(\mathbf{X}^d_i)/n$
of
$\rho_d(f)$
satisfies
$$\sigma_{f,d}^2\leq 2\mathrm{Var}_\rho(f)/(1-\lambda_d)\qquad \text{and, heuristically, 
}\qquad\sigma_{f,d}^2= \O(d)\text{ as $d\to\infty$.}$$
Here 
$\mathrm{Var}_\rho(f):=\rho(f^2)-\rho(f)^2$ and 
$1-\lambda_d$ denotes the spectral gap of the chain 
$\mathbf{X}^d$. The inequality follows by the spectral representation of $\sigma_{f,d}^2$ 
in~\cite{Geyer92,KipnisVaradhan86}.
The reasoning analogous to that 
applied to the integrated autocorrelation time
in \cite[Sec.~2.2]{RobertsRosenthal01} can be used to argue that 
the spectral gap $1-\lambda_d$ is of the order $\O(1/d)$.
Hence the asymptotic variance $\sigma_{f,d}^2$ is $\O(d)$.

The Poisson equation for the Langevin diffusion arising in the scaling limit of  $\mathbf{X}^d$
in~\cite{RobertsGelmanGilks97} is a second order linear ODE with solution $\hat f$ given explicitly in 
terms of $f$ and the density $\rho$, see~\eqref{eq:solutions_Poisson_Eq_diff} below.
For large $d$, the function $\hat f$ ought to approximate the solution of the Poisson equation for
$\mathbf{X}^d$. 
The reasoning in~\cite{MV} then suggests the form of an estimator for $\rho_d(f)$, which 
under appropriate technical assumptions,
satisfies the following CLT:
\begin{equation}\label{eq:CLT}
\sqrt{n}\left(\frac{1}{n}\sum_{i=1}^n\left(f+d(P_d\hat{f}-\hat{f})\right)(\mathbf{X}^d_i)-\rho_d(f)\right)\xrightarrow{d}
N(0,\hat \sigma^2_{f,d})\quad\text{as }n\to\infty\text{,} 
\end{equation}
where $P_d$ is the transition kernel of the chain $\mathbf{X}^d$. 
The main result of this paper (Theorem~\ref{thm:AVbound} below) states that for some constant $C>0$ 
the following inequality 
holds 
$$\hat \sigma_{f,d}^2\leq  C\log(d)/(d(1-\lambda_d))  \qquad \text{and, heuristically,}\qquad\hat \sigma_{f,d}^2=\O(\log(d))\text{ as $d\to\infty$.}$$
Theorem~\ref{thm:AVbound} also gives the explicit dependence of the constant $C$ on the function $f$. 

This result suggests that to achieve the same level
of accuracy as when estimating $\rho_d(f)$ by an average over an IID sample 
form $\rho_d$,
only  $\O(\log d)$ times as many RWM samples are
needed if the control variate $d (P_d\hat{f}-\hat{f})$ is added.  This should be
contrasted to  $\O(d)$ (resp. $\O(d^{1/3})$, $\O(d^{1/4})$, $\O(d^{1/5})$) times as many samples
for the RWM (resp. MALA, Hamiltonian Monte Carlo, fast-MALA) 
without the control
variate, 
see~\cite{RobertsRosenthal01, MALAScaling, HMCScaling, fMALA}. 

The optimal scaling for the proposal variance of a $d$-dimensional RWM chain is $\O(1/d)$, 
see~\cite{RobertsRosenthal01}
for a review  
and~\cite[Thm.~4]{Optimal_Scaling_Var} for the proof that other scalings lead to suboptimal behaviour. 
To get a non-trivial scaling limit in~\cite{RobertsGelmanGilks97},
it is necessary to accelerate the chains $(\mathbf{X}^d)_{d\in\NN}$ linearly 
in dimension. The weak convergence of the accelerated chain to the Langeivn diffusion
suggests that 
$d\hat{f}$ is close to the solution of the Poisson equation for $P_d$ and $f$,
making $d (P_d\hat{f}-\hat{f})$ a good control variate. Using an
approximate solution to the Poisson equation to construct control variates is a
common variance reduction technique, see e.g.~\cite{henderson,Meyn_Control,petros, MV}. 
In this context, more often than not, 
an approximate solution used is a solution of a
Poisson equation of a simpler related process. For instance, in~\cite{MV} 
a sequence of Markov chains on a finite state space, converging weakly to a given RWM chain,
is constructed. 
Then a solutions to the  Poisson equation of the finite state chain
is used to construct a control variate capable of reducing the 
asymptotic variance of the RWM chain arbitrarily. 
Here this idea is turned on its head: 
a solution to the Poisson equation of the limiting diffusion is used to construct a
control variate for a RWM chain from a weakly convergent sequence in~\cite{RobertsGelmanGilks97}.
Since the complexity of the RWM increases arbitrarily as dimension $d\to\infty$, 
it is infeasible to get an arbitrary variance reduction as in~\cite{MV}.  
However, heuristically, the amount of 
variance reduction  measured by the ratio $\sigma_{f,d}^2/\hat \sigma_{f,d}^2$ 
still tends to infinity at the rate $d/\log(d)$.

If the solution of the Poisson equation for $\mathbf{X}^d$ and $f$ were available,
we could construct an estimator for $\rho_d(f)$ with zero variance (see e.g.~\cite{MV}).
Put differently, in this case there would be no need for the chain to explore its state space
at all.
In our setting,
since the jumps of 
$\mathbf{X}^d$
are of size 
$\O(1/\sqrt{d})$~\cite{RobertsGelmanGilks97},
after $\O(\log d)$ steps 
the chain will have explored the distance of 
$\O(\log(d)/\sqrt{d})$. In line with the observation above this distance
tends to zero as $d\to\infty$ since, heuristically, 
$d\hat f$ approximates the solution of the Poisson equation for $P_d$ and $f$
arbitrarily well. 

The key technical step 
in the proof of our result
(Theorem~\ref{thm:GdG} below)
is a type of concentration inequality.
It generalises the limit in~\cite[Lemma~2.6]{RobertsGelmanGilks97}, 
which essentially states that generators of the accelerated chains $(\mathbf{X}^d)_{d\in\NN}$ converge 
to the generator of the Langevin limit when applied to a compactly supported and infinitely smooth function,
in two ways: (A) it extends the limit to a class of functions of sub-exponential growth
and (B) provides estimates for the rate of convergence.
Both of these extensions are key for our main result. (A) allows us to apply 
Theorem~\ref{thm:GdG} to a solution of the Poisson equation, which is not compactly supported.
Note that this step in the proof entails identifying the correct space of functions that is closed under the operation of
solving the Poisson equation (see Proposition~\ref{prop:fhatinS} below). Estimate (B) allows us to control the asymptotic variance 
via a classical spectral-gap bound.
The proof of Theorem~\ref{thm:GdG}, outlined in Sec.~\ref{subsec:outline} below,
crucially depends on the large deviations theory (Sec.~\ref{sec:EventsBounds}), the form of the constant in optimal 
Young's inequality (Sec.~\ref{sec:PDFBOunds})
and Berry-Esseen type bounds (Sec.~\ref{sec:CDFandCFBounds}). 

We conclude the introduction with a comment on how the present paper fits into the literature.
Since, as discussed above, the asymptotic variance 
$\sigma_{f,d}^2$ 
is approximately equal to the product
$2\mathrm{Var}_\rho(f)/(1-\lambda_d)$, 
two ``orthogonal'' approaches to speeding up MCMC algorithms are feasible. 
(a) The MCMC method itself can be modified, with the aim of increasing the
spectral gap, leading to many well-known reversible samplers such as 
MALA and  Hamiltonian Monte Carlo~\cite{HMC}
as well as 
non-reversible ones~\cite{ZigZag,Pavliotis}. 
There is a plethora of papers (see~\cite{MALAScaling,HMCScaling,fMALA} and the references therein)
studying the asymptotic properties of such sampling algorithms as dimension increases to infinity. 
(b) A control variate $g$, satisfying $\rho(g)=0$, may be added to $f$ 
with the aim of reducing $\mathrm{Var}_\rho(f)$ to
$\mathrm{Var}_\rho(f+g)$ 
without modifying the MCMC algorithm. 
A number of control variates have been proposed in the MCMC literature \cite{AssarafCaffarel99, MiraGirolami2014,
OatesGirolami2017, petros}.  Thematically, the present paper fits under (b) and, to the best of our knowledge,
is the first to investigate the growth of the asymptotic variance as the dimension $d\to\infty$ in this context.  
Moreover, it is feasible that our method could be generalised to some of the algorithms  under (a), 
see Section~\ref{sec:Implementation} below for a discussion of possible extensions.

The remainder of paper is structured as follows. Section~\ref{Sec:A&R} 
gives a detailed description of the assumptions and states the results. 
Section~\ref{sec:Extensions} illustrates algorithms based on our main result with numerical examples and discusses (without proof) potential extensions of our results for other MCMC methods, more general targets, etc.
In Section~\ref{sec:proofs} we prove our results. Section~\ref{sec:Technical} 
develops the tools needed for the proofs of Section~\ref{sec:proofs}.
Section~\ref{sec:Technical} 
uses results from probability and analysis but
is independent of all that
precedes it in the paper. 

\section{Results}\label{Sec:A&R}

Let
$\mathbf{X}^d=\{\mathbf{X}^d_n\}_{n\in\NN}$
be a RWM chain in $\RR^d$ with a transition kernel $P_df:=(1/d)\mathcal{G}_df + f$, where 
\begin{equation}\label{eq:MetropolisGenerator}
\mathcal{G}_df(\mathbf{x}^d):=d\mathbb{E}_{\mathbf{Y}^d}\left[\left(f(\mathbf{Y}^d)-f(\mathbf{x}^d)\right) \alpha(\mathbf{x}^d,\mathbf{Y}^d)
\right],\qquad 
\alpha(\mathbf{x}^d,\mathbf{Y}^d):=1\wedge\frac{\rho_d(\mathbf{Y}^d)}{\rho_d(\mathbf{x}^d)},
\end{equation}
$\mathbf{x}^d\in\RR^d$,
$\mathbf{Y}^d=(Y_1^d,\ldots,Y_d^d)\sim N(\mathbf{x}^d,l^2/d\cdot I_d)$
and $x\wedge y:=\min\{x,y\}$ for all $x,y\in\RR$,
started in stationarity $\mathbf{X}^d_1\sim \rho_d$.
Let $\mathcal{S}^n$ consists of all the functions  with their
first $n$ derivatives growing slower then any exponential function. 
More precisely, 
for any $n\in\NN\cup\{0\}$, define
\begin{equation}\label{eq:defSn}
\mathcal{S}^n:=\left\{g\in\mathcal{C}^{n}(\RR)\colon  \sum_{i=0}^n\|g^{(i)}\|_{\infty,s}<\infty\>\>\forall s>0\right\},
\qquad
\text{where $\|g\|_{\infty,s}:=\sup_{x\in\RR}\left(e^{-s|x|}|g(x)|\right)$}
\end{equation}
and
$\mathcal{C}^n(\RR)$ (resp. $\mathcal{C}^0(\RR)$)
denotes $n$-times continuously differentiable (resp. continuous) functions.
Our main result (Theorem~\ref{thm:AVbound} below) applies to the space $\mathcal{S}^1$, 
containing functions $f$ for which 
$\rho(f):=\int_{\RR}f(x)\rho(x) dx$ is typically of interest in applications (e.g. polynomials).  
In addition, spaces in~\eqref{eq:defSn} are closed for solving Poisson's equation in~\eqref{eq:UfPE}, see Proposition~\ref{prop:fhatinS} below. 

Throughout the paper 
$\rho$ denotes a strictly positive density on $\RR$ with $\log(\rho)\in\mathcal{S}^4$ and 
\begin{equation}\label{eq:SupperExponentialTails}
\lim_{|x|\to\infty}\frac{x}{|x|}\cdot \log(\rho(x))'=-\infty\text{,}
\end{equation} 
unless otherwise stated.
Assumption~\eqref{eq:SupperExponentialTails} implies that the tails of $\rho$ decay faster
then any exponential, i.e. 
$\mathbb{E}\left[e^{sX}\right]<\infty$ 
for any $s\in\RR$ for $X\sim\rho$ (cf.~\cite[Sec.~4]{jarner}). 
The assumption $\log(\rho)\in\mathcal{S}^4$ prohibits $\rho$ from
decaying to quickly, e.g. proportionally to $e^{-e^{|x|}}$. 
Both of these assumptions serve brevity and clarity of the proofs and it is feasible they can
be relaxed. Nevertheless, 
a large class of densities of interest satisfy these
assumptions, e.g. mixtures of Gaussian densities or any density
proportional to $e^{-p(x)}$ for a positive polynomial $p$.

The scaling limit, introduced in~\cite{RobertsGelmanGilks97}, 
of the chain $\mathbf{X}^d$ as the dimension $d$ tends to infinity is key 
for all that follows. 
Consider a continuous-time process $\{U^d_t\}_{t\geq0}$, given by $U^d_t:=X^d_{\lfloor d\cdot t\rfloor,1}$,
where $\lfloor\cdot\rfloor$ is the integer-part function
and 
$X^d_{\cdot,1}$ is the first coordinate of $\mathbf{X}^d$ (since the proposal distribution for 
$X^d_{\cdot,1}$
has variance  
$l^2/d$,  time needs to be accelerated to get a non-trivial limit). 
As shown in~\cite{RobertsGelmanGilks97} (see also~\cite{RobertsRosenthal01}), the weak convergence 
$U^d\Rightarrow U$ holds as $d\uparrow\infty$,
where $U$ is the Langevin diffusion
started in stationarity, $U_0\sim \rho$, with generator acting on $f\in\mathcal{C}^2(\RR)$ as
\begin{equation}\label{eq:GenU}
\mathcal{G}f:=(h(l)/2)(f''+(\log\rho)'f'), \quad\text{where 
$h(l):=2l^2\Phi\left(-l\sqrt{J}/2\right)$ and $J:=\rho\left((\log(\rho)')^2\right)$}
\end{equation}
and $\Phi$ is the distribution of $N(0,1)$.
Poisson's equation for $U$ and a function $f$ takes the form 
\begin{equation}\label{eq:UfPE}
\mathcal{G}\hat{f}(x)
=\rho(f)-f(x)\text{.}
\end{equation}
It is immediate that a solution $\hat f$ of~\eqref{eq:UfPE} is given by the formula 
\begin{equation}
\label{eq:solutions_Poisson_Eq_diff}
\hat{f}(x):=\int_0^x\frac{2dy}{h(l)\rho(y)}\int_{-\infty}^y{\rho(z)(\rho(f)-f(z))dz},\qquad x\in\RR.
\end{equation}
In the remainder of the paper
$\hat{f}$ denotes the particular solution in~\eqref{eq:solutions_Poisson_Eq_diff}
of the equation in~\eqref{eq:UfPE}.  

As usual, for $p\in[1,\infty)$, $f:\RR^d\to\RR$
is in $L^p(\rho_d)$ if and only if $\|f\|_p:=\rho_d(|f|^p)^{1/p}<\infty$.
Finally, note that under our assumptions on $\rho$, the kernel $P_d$ of the RWM chain
$\mathbf{X}^d$ defined above is a self-adjoint bounded operator on the Hilbert space
$\{g\in L^2(\rho_d)\colon \rho_d(g)=0\}$ with norm 
$\lambda_d<1$ 

\begin{thm}\label{thm:AVbound}
If $f\in\mathcal{S}^1$, then $\hat f\in\mathcal{S}^3$
and CLT~\eqref{eq:CLT} holds for 
the function $f+dP_d\hat{f}-d\hat{f}$ and the RWM chain
$\mathbf{X}^d$ introduced above. 
Furthermore, 
there exists a constant $C_1>0$, such that for all $f\in\mathcal{S}^1$ and $d\in\NN\setminus\{1\}$, 
the asymptotic variance
$\hat\sigma^2_{f,d}$
in CLT~\eqref{eq:CLT}
satisfies:   
$$\hat\sigma^2_{f,d}\leq C_1 \left(\sum_{i=1}^3\|\hat
f^{(i)}\|_{\infty,1/2}\right)^2\frac{\log(d)}{(1-\lambda_d)d}\text{.}$$
\end{thm}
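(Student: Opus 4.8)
The plan is to reduce the bound on $\hat\sigma^2_{f,d}$ to the classical spectral-gap estimate, and then to control the $L^2(\rho_d)$-norm of the control variate $g_d:=d(P_d\hat f-\hat f)$ by a quantitative version of the generator convergence. Recall that the asymptotic variance in the CLT~\eqref{eq:CLT} for the function $f+g_d$ satisfies $\hat\sigma^2_{f,d}\le 2\|(f+g_d-\rho_d(f))\|_2^2/(1-\lambda_d)$ by the spectral representation in~\cite{Geyer92,KipnisVaradhan86} (this is exactly the inequality already quoted for $\sigma^2_{f,d}$). So it suffices to show $\|f+g_d-\rho_d(f)\|_2^2\le C\log(d)/d$ for a constant depending on $f$ only through $\sum_{i=1}^3\|\hat f^{(i)}\|_{\infty,1/2}$. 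Now the point is that $g_d=d(P_d\hat f-\hat f)=\mathcal{G}_d\hat f$, the generator of the accelerated chain applied to $\hat f$, and since $\hat f$ solves the Poisson equation~\eqref{eq:UfPE} for the limiting diffusion, $\mathcal{G}\hat f=\rho(f)-f$; hence pointwise
$$f(\mathbf{x}^d)+g_d(\mathbf{x}^d)-\rho_d(f)=\mathcal{G}_d\hat f(\mathbf{x}^d)-\mathcal{G}\hat f(x^d_1).$$
Thus the whole problem collapses to estimating the $L^2(\rho_d)$-norm of the difference $\mathcal{G}_d\hat f-\mathcal{G}\hat f$ between the generator of the $d$-th chain and the generator of the scaling limit, both applied to the fixed function $\hat f\in\mathcal{S}^3$.

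The next step is to invoke Theorem~\ref{thm:GdG} (the quantitative generator-convergence / concentration inequality announced in the introduction), which is designed precisely to handle functions of sub-exponential growth and to supply an explicit rate. I expect Theorem~\ref{thm:GdG} to give a bound of the form $\|\mathcal{G}_d g-\mathcal{G}g\|_{2,\rho_d}\le C\,(\sum_{i=1}^3\|g^{(i)}\|_{\infty,1/2})\,\sqrt{\log(d)/d}$ for $g\in\mathcal{S}^3$, which applied with $g=\hat f$ yields exactly the claimed bound after squaring and dividing by $1-\lambda_d$. The membership $\hat f\in\mathcal{S}^3$ when $f\in\mathcal{S}^1$ is the content of Proposition~\ref{prop:fhatinS}: two integrations in the explicit formula~\eqref{eq:solutions_Poisson_Eq_diff}, combined with the assumption $\log\rho\in\mathcal{S}^4$ and the super-exponential tail condition~\eqref{eq:SupperExponentialTails}, keep the solution (and its first three derivatives, since $\mathcal{G}\hat f=\rho(f)-f$ controls $\hat f''$ in terms of $\hat f'$ and $f$, and one more derivative of the ODE controls $\hat f'''$) within the sub-exponential class. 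Validity of the CLT~\eqref{eq:CLT} itself follows once $f+g_d\in L^2(\rho_d)$, which the preceding bound establishes, together with $\rho_d(f+g_d)=\rho_d(f)$ (because $\rho_d$ is stationary for $P_d$, so $\rho_d(\mathcal{G}_d\hat f)=0$) and the self-adjointness of $P_d$ on $L^2_0(\rho_d)$ noted just above the theorem; the CLT for bounded-below-by-$\lambda_d$ reversible chains applied to $L^2$ functions is standard~\cite{KipnisVaradhan86,Geyer92}.

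The main obstacle, clearly, is Theorem~\ref{thm:GdG}, i.e.\ proving the quantitative generator bound for sub-exponentially growing test functions; everything above is bookkeeping once that is in hand. Schematically, $\mathcal{G}_d g(\mathbf{x}^d)=d\,\mathbb{E}_{\mathbf{Y}^d}[(g(Y^d_1)-g(x^d_1))\alpha(\mathbf{x}^d,\mathbf{Y}^d)]$ and one Taylor-expands $g(Y^d_1)-g(x^d_1)$ to second order in the $\O(1/\sqrt d)$ increment $Y^d_1-x^d_1$; the leading terms reproduce $(h(l)/2)(g''+(\log\rho)'g')$ once one shows the acceptance ratio $\alpha$ concentrates, in the sense that $\mathbb{E}[\,\alpha\mid Y^d_1-x^d_1\,]$ is close to the deterministic factor coming from $2\Phi(-l\sqrt J/2)$ — this is the content of Lemma~2.6 of~\cite{RobertsGelmanGilks97}, but now one needs (A) the expansion to survive multiplication by $g$ of sub-exponential growth, controlled via the $\|\cdot\|_{\infty,s}$ norms and the finiteness of all exponential moments of $\rho$ guaranteed by~\eqref{eq:SupperExponentialTails}, and (B) explicit $\O(\sqrt{\log d/d})$ rates, which is where the large-deviations estimates of Section~\ref{sec:EventsBounds} (controlling the rare event that $\sum_{i=2}^d$ of the log-density increments deviates from its mean), the sharp constant in Young's inequality from Section~\ref{sec:PDFBOunds} (bounding convolutions of the relevant densities), and the Berry--Esseen bounds of Section~\ref{sec:CDFandCFBounds} (quantifying the CLT for the sum $\sum_{i=2}^d\log(\rho(Y^d_i)/\rho(x^d_i))$ that governs $\alpha$) all enter. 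I would organise the proof of Theorem~\ref{thm:GdG} by splitting $\RR^d$ into a high-probability ``bulk'' region where the Berry--Esseen and Young's-inequality estimates give the $\sqrt{\log d/d}$ rate, and a complementary ``tail'' region whose $\rho_d$-measure is made summably small by the large-deviations bound, with the sub-exponential growth of $g$ ensuring the contribution from the tail region is still negligible.
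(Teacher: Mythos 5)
Your proposal follows the paper's argument closely and is essentially correct: reduce to the spectral-gap bound $\hat\sigma^2_{f,d}\le 2\|\mathcal{G}_d\hat f-\mathcal{G}\hat f\|_2^2/(1-\lambda_d)$ via the identity $f+d(P_d\hat f-\hat f)-\rho_d(f)=\mathcal{G}_d\hat f-\mathcal{G}\hat f$ (a consequence of $P_d=\mathrm{Id}+(1/d)\mathcal{G}_d$ and $\mathcal{G}\hat f=\rho(f)-f$), then bound the $L^2(\rho_d)$-norm by a quantitative generator-convergence estimate and use Proposition~\ref{prop:fhatinS} for $\hat f\in\mathcal{S}^3$. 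The $L^2$ bound you invoke is really Proposition~\ref{cor:Lpnorm} (the $L^2$-norm consequence of the pointwise Theorem~\ref{thm:GdG} via Proposition~\ref{prop:AdSizes}), not Theorem~\ref{thm:GdG} itself; the two are related exactly as you describe in your bulk/tail decomposition, so this is only a labelling slip.

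The one place where you take a genuinely different route is the CLT validity itself. You argue that a uniform spectral gap $\lambda_d<1$ on $L^2_0(\rho_d)$ plus $f+g_d\in L^2(\rho_d)$ suffices for the Kipnis--Varadhan CLT, which is correct (the spectral representation then gives a finite asymptotic variance automatically). The paper instead establishes $V$-uniform ergodicity with $V=\rho_d^{-1/2}$ (Lemma~\ref{lem:GE}), checks $\max\{\hat f^2,(P_d\hat f)^2\}\le cV$ via Lemma~\ref{lemma:AuxLp1}, and invokes the Meyn--Tweedie CLT. Your route is shorter, but note that the fact $\lambda_d<1$ is not free: in the paper it is \emph{deduced} from the $V$-uniform ergodicity of Lemma~\ref{lem:GE} together with \cite{RobertsRosenthal97}, even though it is informally stated just before the theorem. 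So if you want a self-contained argument you still need Lemma~\ref{lem:GE} (or an equivalent geometric-ergodicity argument for the RWM chain) to produce the spectral gap; once that is in hand, your simpler Kipnis--Varadhan step replaces the Meyn--Tweedie invocation, and the rest is identical to the paper.
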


The proof of Theorem~\ref{thm:AVbound} is given in Section~\ref{subsec:proofthm3} below. 
It is based on the spectral-gap estimate of the asymptotic variance 
$\hat\sigma_{f,d}^2\leq 2\|\mathcal{G}_d\hat f-\mathcal{G}\hat f\|^2_2/(1-\lambda_d)$ 
from~\cite{Geyer92,KipnisVaradhan86}, the uniform ergodicity of the chain 
$\mathbf{X}^d$ 
and the following proposition.

\begin{prop}\label{cor:Lpnorm}
There exists a constant $C_2$  such that for every
$f\in\mathcal{S}^3$ and all $d\in\NN\setminus\{1\}$ we have:
$\left\|\mathcal{G}f-{\mathcal{G}}_df\right\|_2\leq
C_2\left(\sum_{i=1}^3\|f^{(i)}\|_{\infty,1/2}\right)\sqrt{\log(d)/d}$.  
\end{prop}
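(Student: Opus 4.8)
The plan is to reduce Proposition~\ref{cor:Lpnorm} to a pointwise estimate — essentially the content of Theorem~\ref{thm:GdG} — and then integrate. Since $f$ depends on the first coordinate only, the goal is a bound of the form
$$\bigl|\mathcal{G}f(x_1^d)-\mathcal{G}_df(\mathbf{x}^d)\bigr|\le C\Bigl(\sum_{i=1}^3\|f^{(i)}\|_{\infty,1/2}\Bigr)\sqrt{\frac{\log d}{d}}\;\Psi_d(\mathbf{x}^d),\qquad d\ge 2,$$
with $\Psi_d\ge 0$ and $\sup_{d\ge 2}\rho_d(\Psi_d^2)<\infty$; taking $L^2(\rho_d)$-norms then gives the proposition with $C_2=C\,(\sup_d\rho_d(\Psi_d^2))^{1/2}$. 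The exponent $1/2$ in $\|\cdot\|_{\infty,1/2}$ is calibrated precisely so that the pointwise bound carries only a weight $\lesssim e^{|x_1^d|/2}$, whose square $e^{|x_1^d|}$ is $\rho_d$-integrable by~\eqref{eq:SupperExponentialTails}; the hypothesis $d\ge 2$ only ensures $\log d>0$.

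For the pointwise bound, write $Y_1^d=x_1^d+(l/\sqrt d)Z_1$ with $Z_1\sim N(0,1)$ and Taylor-expand $f(Y_1^d)-f(x_1^d)$ to second order: the remainder is $O(d^{-3/2}|Z_1|^3)$ with a multiplicative factor $\le\|f^{(3)}\|_{\infty,s}e^{s|x_1^d|}e^{sl|Z_1|/\sqrt d}$. Multiplying by $d\,\alpha(\mathbf{x}^d,\mathbf{Y}^d)$, using $\alpha\le 1$, and integrating over $\mathbf{Y}^d$ bounds the remainder's contribution by $Cd^{-1/2}\|f^{(3)}\|_{\infty,s}e^{s|x_1^d|}$, already of the desired form (with a better power of $d$). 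It remains to show, with error of order $\sqrt{\log d/d}$ times a sub-exponential weight in $x_1^d$,
$$l\sqrt d\,f'(x_1^d)\,\mathbb{E}_{\mathbf{Y}^d}[Z_1\alpha]\approx\tfrac{h(l)}{2}(\log\rho)'(x_1^d)f'(x_1^d)\quad\text{and}\quad\tfrac{l^2}{2}f''(x_1^d)\,\mathbb{E}_{\mathbf{Y}^d}[Z_1^2\alpha]\approx\tfrac{h(l)}{2}f''(x_1^d).$$

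For both, write $\alpha=1\wedge\exp(\ell_1+W^d_{-1})$, $\ell_1=\log\rho(Y_1^d)-\log\rho(x_1^d)$, $W^d_{-1}=\sum_{i=2}^d\bigl(\log\rho(Y_i^d)-\log\rho(x_i^d)\bigr)$, the latter being, conditionally on $\mathbf{x}^d$, a sum of independent summands that is independent of $Z_1$. For the second relation, replace $\alpha$ by $1\wedge e^{W^d_{-1}}$ — the error is $\le\mathbb{E}[Z_1^2|\ell_1|]$, which is $O(d^{-1/2})$ times a weight since $|\ell_1|\lesssim d^{-1/2}e^{s|x_1^d|}(|Z_1|+|Z_1|^2/\sqrt d)$ by Taylor's theorem for $\log\rho\in\mathcal{S}^4$ — use independence to factor $\mathbb{E}[Z_1^2(1\wedge e^{W^d_{-1}})]=\mathbb{E}[1\wedge e^{W^d_{-1}}]$, and prove $\mathbb{E}[1\wedge e^{W^d_{-1}}]\to h(l)/l^2=2\Phi(-l\sqrt J/2)$ at rate $\sqrt{\log d/d}$. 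For the first relation, $\mathbb{E}[Z_1(1\wedge e^{W^d_{-1}})]=0$ by independence, so one expands $w\mapsto 1\wedge e^{w}$ to first order around $W^d_{-1}$ (derivative $e^w\mathbf{1}(w<0)$): this produces $\ell_1 e^{W^d_{-1}}\mathbf{1}(W^d_{-1}<0)$, and with $\ell_1\approx(l/\sqrt d)(\log\rho)'(x_1^d)Z_1$ and $\mathbb{E}[e^{W^d_{-1}}\mathbf{1}(W^d_{-1}<0)]\to\Phi(-l\sqrt J/2)$ the leading term reassembles to $\tfrac{h(l)}{2}(\log\rho)'(x_1^d)f'(x_1^d)$, the first-order Taylor error near the kink of $1\wedge e^w$ being absorbed via a bound on the density of $W^d_{-1}$ near its mean. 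The convergence statements for $W^d_{-1}$ are exactly where Section~\ref{sec:Technical} enters: a second-order expansion of its summands exhibits $W^d_{-1}$, conditionally on $\mathbf{x}^d$, as $N(c_d(\mathbf{x}^d),v_d(\mathbf{x}^d))$ plus small remainders, with $c_d,v_d$ empirical averages of $(\log\rho)''$ and $((\log\rho)')^2$ converging to $-l^2J/2$ (by integration by parts) and $l^2J$; one then uses the closed forms of $\mathbb{E}[1\wedge e^{N(c,v)}]$ and $\mathbb{E}[e^{N(c,v)}\mathbf{1}(<0)]$ with their Lipschitz dependence on $(c,v)$, a Berry--Esseen bound for the non-Gaussianity of $W^d_{-1}$ at typical $\mathbf{x}^d$, and large-deviation estimates to bound the $L^2(\rho_d)$-contribution of the atypical $\mathbf{x}^d$ on which $c_d,v_d$ stray from their limits or $v_d$ degenerates.

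The main obstacle is this last ingredient: getting a bound uniform over all $\mathbf{x}^d$, in particular over configurations where $v_d(\mathbf{x}^d)$ is atypically small and the Gaussian approximation of $W^d_{-1}$ — especially the uniform near-mean density bound used in the first relation — degrades. Balancing the (exponentially) small probability of such configurations against the large local error there is where the sharp constant in optimal Young's inequality is needed, trading an $L^p$-moment of $Z_1$ against an $L^q$-functional of $W^d_{-1}$, and is, I expect, where the factor $\sqrt{\log d}$ rather than a constant is incurred. A pervasive but routine constraint alongside this is that every error term must be shown to depend on $f$ only through $\sum_{i=1}^3\|f^{(i)}\|_{\infty,1/2}$ and on $\mathbf{x}^d$ only through one fixed square-$\rho_d$-integrable weight, with $d$-independent constants; this is why one carries $\|\cdot\|_{\infty,s}$ for a range of $s$ and spends part of the exponential decay of $\rho$ on integrating the weights.
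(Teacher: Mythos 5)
Your broad reduction — a pointwise comparison of $\mathcal{G}_d f$ and $\mathcal{G}f$ followed by an integration in $L^2(\rho_d)$, with the $1/2$ in $\|\cdot\|_{\infty,1/2}$ chosen so the carried weight $e^{|x_1^d|}$ is $\rho_d$-square-integrable — is indeed the route the paper takes, and your informal account of the chain of Taylor expansions (separating the first coordinate, second-order expanding the acceptance ratio in $W^d_{-1}$, approximating $W^d_{-1}$ by a normal, using the closed forms of $\mathbb{E}[1\wedge e^{N}]$ and $\mathbb{E}[e^{N}\mathbf{1}_{\{N<0\}}]$, and a Berry--Esseen bound) matches the $\mathcal{G}_d\to\tilde{\mathcal{G}}_d\to\hat{\mathcal{G}}_d\to\breve{\mathcal{G}}_d\to\mathcal{G}$ pipeline of Section~\ref{subsec:outline}.

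However, the organizing structure of your plan is not quite what Theorem~\ref{thm:GdG} provides and would not work as written. You posit a \emph{single uniform} pointwise bound with rate $\sqrt{\log d/d}$ and a weight $\Psi_d$ with $\sup_d\rho_d(\Psi_d^2)<\infty$. Theorem~\ref{thm:GdG} gives no such thing: it gives the rate $a_d/\sqrt d$ (for a sluggish sequence $a_d$) only on a set $\mathcal{A}_d$, together with a measure estimate $\rho_d(\RR^d\setminus\mathcal{A}_d)\lesssim e^{-a_d^2}$ (Proposition~\ref{prop:AdSizes}). On the complement $\RR^d\setminus\mathcal{A}_d$ no decay in $d$ is available; one needs a separate crude bound $\max\{|\mathcal{G}f|,|\mathcal{G}_df|\}\le C\,e^{|x_1^d|}\sum_{i=1}^2\|f^{(i)}\|_{\infty,1/2}$ (Lemma~\ref{lemma:AuxLp1}) — a step your plan omits entirely, yet it is essential: without it the integral over the bad set cannot be controlled. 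The logarithm then appears not from a uniform pointwise statement but from the balance between the two pieces: the proof first establishes the general estimate (Proposition~\ref{thm:Lp})
$$\|\mathcal{G}f-\mathcal{G}_df\|_p\le C_4\Bigl(\sum_{i=1}^3\|f^{(i)}\|_{\infty,1/2}\Bigr)\Bigl(\frac{a_d}{\sqrt d}+e^{-a_d^2/p}\Bigr),$$
and then specializes to $p=2$ with $a_d:=\sqrt{2\log d}$, so that both terms are $O(\sqrt{\log d/d})$.

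You also misattribute the role of the optimal Young's inequality. It is not used to balance small-probability configurations against large local errors; that balance is done purely by the choice $a_d=\sqrt{2\log d}$ and the exponential tail $e^{-a_d^2}$. Optimal Young's inequality (with its sharp constant producing the extra $1/\sqrt d$) enters much earlier and for a different purpose: in Lemma~\ref{lemma:sumMpdf} it bounds, \emph{uniformly in $d$ and in $\mathbf{x}^d\in\mathcal{A}_d$}, the supremum norm of the density of $\sum_{i\ge 2}K(x_i^d,Y_i^d)$, which is exactly what controls $\frac{\partial^2}{\partial y^2}\beta(\mathbf{x}^d,y)$ in Lemma~\ref{lemma:betaProperties}(v) and thereby makes the second-order Taylor expansion of the acceptance probability in $Y^d_1$ (Proposition~\ref{prop:TildaGHatG}) usable. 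So the two places you flag as ``where the difficulty is'' — the balancing and the density degeneracy — are both real, but you have them hooked up to the wrong tools: the balancing is resolved by the sluggish-sequence/crude-bound split, and the density issue is resolved by Young's inequality together with condition~\eqref{eq:AnCond2}, which guarantees a positive proportion of the coordinates lie in the set $A$ where the single-summand density is under control.
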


The proof of Proposition~\ref{cor:Lpnorm}, given in Section~\ref{subsec:proofthm2} below,
requires a pointwise control of the difference $\mathcal{G}f-{\mathcal{G}}_df$ on a large subset of $\RR^d$. 
To formulate this precisely, we need the following. 

\begin{defin}\label{def:sluggish}
A positive sequence $a=\{a_d\}_{d\in\NN}$ is \textbf{sluggish} if the following holds:
$$\lim_{d\to\infty} a_d=\infty \quad\text{and}\quad \sup_{d\in\NN\setminus\{1\}}\frac{a_d}{\sqrt{\log{d}}}<\infty\text{.}$$ 
\end{defin}

Theorem~\ref{thm:GdG} below is the main technical result of the paper. 
It generalises the limit in~\cite[Lemma~2.6]{RobertsGelmanGilks97} to a class of unbounded functions
and provides an error estimate for it. 
The bound in Theorem~\ref{thm:GdG} yields sufficient control of the difference 
$\mathcal{G}f-{\mathcal{G}}_df$ to establish Proposition~\ref{cor:Lpnorm}.

\begin{thm}\label{thm:GdG}
Let $a=\{a_d\}_{d\in\NN}$ be a sluggish sequence. There exist 
constants $c_3,C_3>0$ (dependent on $a$)
and measurable sets
$\mathcal{A}_d\subset \RR^d$,
such that for all $d\in\NN$ we have
$\rho_d(\RR^d\setminus \mathcal{A}_d)\leq c_3 e^{-a_d^2}$
and 
$$\left|\mathcal{G}f(x^d_1)-{\mathcal{G}}_df(\mathbf{x}^d)\right|\leq
C_3\left(\sum_{i=1}^3\|f^{(i)}\|_{\infty,1/2}\right) e^{|x^d_1|}
\frac{a_d}{\sqrt{d}} \qquad\text{for any $f\in\mathcal{S}^3$ and $\mathbf{x}^d\in \mathcal{A}_d$.}$$
\end{thm}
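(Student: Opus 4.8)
The plan is to expand $\mathcal{G}_df(\mathbf{x}^d)$ around the first coordinate and show that the leading term matches $\mathcal{G}f(x_1^d)$ up to an error of the claimed order on a set $\mathcal{A}_d$ of overwhelming $\rho_d$-probability. Writing $\mathbf{Y}^d=\mathbf{x}^d+Z^d$ with $Z^d\sim N(0,l^2/d\cdot I_d)$ and splitting $Z^d=(Z_1,\mathbf{Z}_{-1})$, I would use a second-order Taylor expansion of $f$ in the first coordinate only, $f(\mathbf{Y}^d)-f(\mathbf{x}^d)=f'(x_1^d)Z_1+\tfrac12 f''(x_1^d)Z_1^2+R$, where $R$ is a third-order remainder controlled by $\|f^{(3)}\|_{\infty,s}$ and by moments of $Z_1$ (which are of order $d^{-3/2}$). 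Multiplying by $d$ and by the acceptance probability $\alpha(\mathbf{x}^d,\mathbf{Y}^d)=1\wedge\exp\big(\sum_{i=1}^d[\log\rho(x_i^d+Z_i)-\log\rho(x_i^d)]\big)$, the problem reduces to estimating $\mathbb{E}[Z_1\,\alpha]$, $\mathbb{E}[Z_1^2\,\alpha]$ and the remainder term. Here the key point is that $\alpha$ depends on $Z_1$ only weakly (a single summand among $d$), so conditioning on $\mathbf{Z}_{-1}$ one expects $\mathbb{E}[Z_1^2\,\alpha\mid\mathbf{Z}_{-1}]\approx (l^2/d)\,\mathbb{E}[\alpha\mid\mathbf{Z}_{-1}]$ and $\mathbb{E}[Z_1\,\alpha\mid\mathbf{Z}_{-1}]\approx \mathrm{Cov}(Z_1,\alpha\mid\mathbf{Z}_{-1})$, the latter being expressible (via a first-order expansion of $\log\rho$ in the first coordinate) in terms of $(\log\rho)'(x_1^d)\cdot\mathrm{Var}(Z_1)=(\log\rho)'(x_1^d)\,l^2/d$ times $\mathbb{E}[\alpha\mid\mathbf{Z}_{-1}]$. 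Collecting these, $\mathcal{G}_df(\mathbf{x}^d)\approx \big(f''(x_1^d)+(\log\rho)'(x_1^d)f'(x_1^d)\big)\cdot\tfrac{l^2}{2}\,\mathbb{E}[\alpha^{(-1)}]$, where $\alpha^{(-1)}$ is the acceptance ratio built from coordinates $2,\dots,d$ only.

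The crux is therefore to show that $\tfrac{l^2}{2}\,\mathbb{E}[\alpha^{(-1)}]$ is within $C\,a_d/\sqrt{d}$ of $h(l)/2=l^2\Phi(-l\sqrt{J}/2)$, uniformly over $\mathbf{x}^d$ in a good set. The random variable $\log\alpha^{(-1)}=\sum_{i=2}^d[\log\rho(x_i^d+Z_i)-\log\rho(x_i^d)]$ is, by a Taylor expansion, close to $\sum_{i\ge2}[(\log\rho)'(x_i^d)Z_i+\tfrac12(\log\rho)''(x_i^d)Z_i^2]$; taking expectations over $\mathbf{Z}_{-1}$ and using $\mathbb{E}[Z_i^2]=l^2/d$, the mean of this sum is $\tfrac{l^2}{2d}\sum_{i\ge2}(\log\rho)''(x_i^d)$ and the variance is $\tfrac{l^2}{d}\sum_{i\ge2}((\log\rho)'(x_i^d))^2$ to leading order. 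On the good set $\mathcal{A}_d$ I would demand that both empirical averages $\tfrac1d\sum_{i\ge2}(\log\rho)''(x_i^d)$ and $\tfrac1d\sum_{i\ge2}((\log\rho)'(x_i^d))^2$ be within $O(a_d/\sqrt d)$ of their $\rho$-means, which are $-J$ and $J$ respectively (the identity $\rho((\log\rho)'')=-\rho(((\log\rho)')^2)$ coming from integration by parts). This is exactly where the large deviations / Berry--Esseen machinery of Section~\ref{sec:Technical} enters: by the sluggishness of $a$, $\rho_d(\RR^d\setminus\mathcal{A}_d)\le c_3 e^{-a_d^2}$ follows from a standard concentration bound for sums of i.i.d.\ random variables with all exponential moments finite (guaranteed by \eqref{eq:SupperExponentialTails} and $\log\rho\in\mathcal{S}^4$). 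Granting this, $\log\alpha^{(-1)}$ is asymptotically $N(-l^2 J/2,\,l^2 J)$, and $\mathbb{E}[e^{0\wedge\log\alpha^{(-1)}}]\to 2\Phi(-l\sqrt J/2)$ by the classical computation $\mathbb{E}[1\wedge e^{N(-\mu,2\mu)}]=2\Phi(-\sqrt{\mu/2})$; the rate $O(a_d/\sqrt d)$ for this convergence is delivered by a Berry--Esseen bound applied to the CDF of $\log\alpha^{(-1)}$ near $0$.

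The main obstacle I expect is not the leading-order heuristic but making every remainder term carry the explicit weight $e^{|x_1^d|}\sum_{i=1}^3\|f^{(i)}\|_{\infty,1/2}$ and the explicit rate $a_d/\sqrt d$ \emph{uniformly} in $\mathbf{x}^d\in\mathcal{A}_d$. Concretely: (i) the Taylor remainders in $f$ involve $f^{(3)}$ evaluated at intermediate points $x_1^d+\theta Z_1$, so one must convert $\|f^{(3)}\|_{\infty,s}$ bounds into bounds with the clean exponent $e^{|x_1^d|}$, absorbing $e^{s|Z_1|}$-type factors using Gaussian moment generating functions and choosing $s=1/2$; (ii) the acceptance ratio is not bounded below, so the "weak dependence of $\alpha$ on $Z_1$" step requires care — one should condition on $\mathbf{Z}_{-1}$, expand $\alpha$ in $Z_1$ only inside the single exponent it affects, and control the error by $|(\log\rho)'(x_1^d)|$ and higher derivatives of $\log\rho$, which on $\mathcal{A}_d$ should themselves be controlled (so $\mathcal{A}_d$ must also confine $x_1^d$, e.g.\ to $|x_1^d|\le a_d$ or similar, compatible with the stated probability bound); (iii) one must check that adding the single missing coordinate (replacing $\alpha^{(-1)}$ by the full $\alpha$ and $\sum_{i\ge2}$ by $\sum_{i\ge1}$) changes everything by at most $O(1/d)$, again using that $(\log\rho)'(x_1^d)$ is controlled on $\mathcal{A}_d$. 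Assembling these, with the probability estimate for $\mathcal{A}_d$ from the concentration results of Section~\ref{sec:Technical}, yields the theorem with constants $c_3,C_3$ depending only on the sluggish sequence $a$ (through $\sup_d a_d/\sqrt{\log d}$) and on $\rho$.
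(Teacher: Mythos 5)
Your high-level strategy is the same as the paper's: expand $f$ and the acceptance probability in the first proposal coordinate, use concentration of the empirical averages $\tfrac{1}{d}\sum_{i\ge 2}((\log\rho)'(x_i^d))^2$ and $\tfrac{1}{d}\sum_{i\ge 2}(\log\rho)''(x_i^d)$ (conditions~\eqref{eq:AnCond3}--\eqref{eq:AnCond4}), and convert the near-normality of the log acceptance ratio into a quantitative comparison with $\mathcal{G}$ via Berry--Esseen. But there is a genuine gap at the single hardest point of the argument, and you have not identified it.

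The heuristic ``$\mathbb{E}[Z_1^2\alpha\mid\mathbf{Z}_{-1}]\approx\tfrac{l^2}{d}\mathbb{E}[\alpha\mid\mathbf{Z}_{-1}]$'' and the analogous covariance estimate require a \emph{second-order} Taylor expansion of the conditional acceptance probability $y\mapsto\beta(\mathbf{x}^d,y)$ around $y=x_1^d$. The obstruction is that $x\mapsto 1\wedge e^x$ is only $\mathcal{C}^{0,1}$: its derivative $e^x 1_{\{x<0\}}$ jumps at $0$, so $\partial_y^2\beta(\mathbf{x}^d,y)$ exists and can be bounded only if the distribution of the conditional log-ratio $\sum_{i\ge 2}[\log\rho(x_i^d+Z_i)-\log\rho(x_i^d)]$ has a density that is bounded \emph{uniformly in $d$ and $\mathbf{x}^d\in\mathcal{A}_d$} (see Lemma~\ref{lemma:betaProperties}(v)). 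The paper's entire apparatus around the set $A$ of Proposition~\ref{prop:auxAndef}, the polynomial $K(x,y)$ in~\eqref{eq:Mdef} whose cubic coefficient $(\log\rho)'(x)^3 1_A(x)/3$ is engineered to be bounded, the condition~\eqref{eq:AnCond2} forcing a positive fraction of coordinates into $A$, and above all the optimal Young's inequality (Proposition~\ref{prop:SumPolynomialPDFboun}, Lemma~\ref{lemma:sumMpdf}) exists precisely to deliver this uniform density bound. The ordinary Young's inequality is not enough, because each factor density has $\|q_i\|_\infty\sim\sqrt{d}$ and you need the $1/\sqrt{d}$ gain from the sharp constant to make the convolution $O(1)$. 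Without some substitute for this mechanism, the second-order expansion of $\alpha$ in $Z_1$ does not close, and the remainder terms cannot be shown to be $O(d^{-1/2})$. Your Berry--Esseen step has the same hidden dependence: the paper replaces $\log\rho(x_i^d+Z_i)-\log\rho(x_i^d)$ by $K(x_i^d,Z_i)$ exactly so that the CFs of the summands can be computed and bounded (Lemmas~\ref{lemma:Acharacteristic}, \ref{lemma:nonAcharacteristic}, \ref{lemma:CharaN+bN^2}); you do not say how you would estimate the CF of the true log-ratio.

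One smaller mismatch with the paper's design: you propose to confine $|x_1^d|$ inside $\mathcal{A}_d$. The paper explicitly does not do this (see the remark after Definition~\ref{def:cond}: ``there are no restrictions on its first coordinate''); the $e^{|x_1^d|}$ factor in the statement of the theorem is what absorbs the growth in $x_1^d$, via the weighted norms $\|\cdot\|_{\infty,s}$ in $\mathcal{S}^n$ and Proposition~\ref{prop:TaylorEstimate}. Confining $x_1^d$ could be made to work but would weaken the theorem (the bound would not hold for all $\mathbf{x}^d$ in a set of the stated probability with unrestricted first coordinate) and is unnecessary once the weighted-norm bookkeeping is in place. Also, at finite $d$ the coefficient of $f'(\log\rho)'$ is $l^2\,\mathbb{E}[e^W 1_{\{W<0\}}]$ and the coefficient of $f''$ is $\tfrac{l^2}{2}\mathbb{E}[1\wedge e^W]$; these agree only in the limit, so they have to be tracked separately at the $O(a_d/\sqrt d)$ level, as the paper does in $\hat{\mathcal{G}}_d$ and $\breve{\mathcal{G}}_d$.
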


The proof of Theorem~\ref{thm:GdG}
is outlined and given in Sections~\ref{subsec:outline} and~\ref{sec:proofthm1} below, respectively.

\begin{rema}
The dependence on $f$ in the bound of Theorem~\ref{thm:GdG} is not sharp. The factor
$\sum_{i=1}^3\|f^{(i)}\|_{\infty,1/2}$ 
is used because it
states concisely that the speed of the convergence of
$\mathcal{G}_df$ to $\mathcal{G}f$ depends linearly on the first three derivatives of
$f$.  
Moreover, it is not clear if 
the bound in Theorem~\ref{thm:AVbound} and
Proposition~\ref{cor:Lpnorm} is optimal in $d$. However, if an improvement were possible, the proof
would have to be significantly different to the one presented here. 
In particular, a better control of the difference
$\left|\mathcal{G}f-{\mathcal{G}}_df\right|$ on $\RR^d\setminus \mathcal{A}_d$ would be
required. 
\end{rema}

\section{Discussion and numerical examples}\label{sec:Extensions}
\subsection{Discussion}

In this section we discuss potential extensions of Theorem~\ref{thm:AVbound} to settings satisfying weaker assumptions or involving related MCMC chains. 

\subsubsection{IID target for non-RWM chains in stationarity}
\label{subsubsec:MALA_fast_MALA}
Perhaps the most natural generalisation of Theorem~\ref{thm:AVbound}
would be to the MALA and fast-MALA chains (in~\cite{MALAScaling} and~\cite{fMALA}) for which it is also possible to obtain non-trivial weak Langevin diffusion limits under appropriate scaling. Since the form of the Poisson equation in~\eqref{eq:UfPE} is preserved it is possible to define an estimator like the one in~\eqref{eq:CLT} and it seems feasible that a version of Theorem~\ref{thm:GdG} can be established in this context using methods analogous to the ones in this paper.


\subsubsection{IID target in the transient phase for the RWM chain}
%

Theorem~\ref{thm:AVbound} is a result only about the stationary behaviour of the chain. As in practice MCMC chains are typically started away from stationarity it is important to understand the transient behaviour. In~\cite{Jourdain_Scaling_RWM}
it is shown that the scaling limit described in Section~\ref{Sec:A&R} above
has mean-field behaviour of the McKean type, i.e. the limiting process is a continuous semimartingale with characteristics 
that at time $t$ depend on the law of the process at $t$. 
This suggests that an appropriately chosen time-dependent function $\hat f$ in 
the estimator in~\eqref{eq:CLT} could further reduce the constant in the bound of Theorem~\ref{thm:AVbound}.

\subsubsection{General product target density}
The class of target distributions considered in~\cite{Bedard2007,BedardRosenthal2008}, 
preserves the independence (i.e. product) structure but allows for a different, dimension dependent, scaling of each of
the components of the target law. If the proposal variances appropriately reflect the scaling in the target, 
each component in the infinite dimensional limit is a Langevin diffusion. Again, as in Section~\ref{subsubsec:MALA_fast_MALA} above, the estimator in~\eqref{eq:CLT} can be applied directly and an extension of 
Theorem~\ref{thm:AVbound} to this setting appears feasible.


\subsubsection{Gaussian targets in high dimensions}\label{subsubsec:Gaussian_targets}
Let $\pi_0$ denote a Gaussian target on $\RR^d$ with mean $\mu$ and covariance matrix
$\text{diag}(\sigma^2_{11},\dots,\sigma^2_{dd})$. Inspired by~\cite[Thm~5]{RobertsRosenthal01}
and the proof of Theorem~\ref{thm:AVbound}, 
a good control variate for the ergodic average estimator for
$\pi_0(f)$
takes the form 
$d(P_d\tilde f-\tilde f)$, 
where $\tilde f$ solves the ODE
$$\tilde{f}''+\left((\partial/\partial x^d_1)\log\pi_0\right)\tilde f'=2/h_0(l)\cdot(\pi_0(f)-f),$$
with $h_0(l):=2l^2\Phi(-l\sqrt{J_0}/2)$ and $J_0:=1/d\cdot \sum_{j=2}^d1/\sigma^2_{jj}$. 
In the case of the mean, $f(x^d_1)=x^d_1$, we can
solve the ODE explicitly: $\tilde f(x^d_1)=2\sigma^2_{11}/h(l)\cdot x^d_1$.

If $\pi_0$ has a general non-degenerate covariance matrix $\Sigma$,
we have an ODE analogous to the one above for each eigen-direction of $\Sigma$. 
The control variate for the mean of the first coordinate,
$f(x^d_1)=x^d_1$, 
is then a linear combination of the 
control variates for the means in eigen-directions. 
Specifically, 
$\tilde f:\RR^d\to\RR$
in the estimator analogous to the one in~\eqref{eq:CLT}
(see the numerical example for $h=0$ in Section~\ref{subsubsec:non-prod-example}) takes the form 
\begin{equation}\label{eq:GaussianMeanCV}
\tilde f(\mathbf{x}^d)=2/h_0(l) \cdot \sum_{j=1}^dx^d_j\Sigma_{j1},\quad \text{with
} h_0(l)=2l^2\Phi(-l\sqrt{J_0}/2) \text{ and } J_0=1/d\cdot
\text{Tr}((\Sigma^{-1})_{2:d,2:d}).  
\end{equation}
Note that 
$\tilde f$
does not depend on the mean of the target, a special feature of the
Gaussian setting.  

\subsubsection{Non-product target density}
\label{subsubsec:Beskos_Roberts_Stuart}
A typical non-product target density 
considered in the literature~\cite{Optimal_Scaling_Var,Optimal_Scaling_RWM_hilbert,Optimal_Scaling_MALA_hilbert}
is a projection of a probability measure $\Pi$ on a separable real Hilbert space $\mathcal{H}$
onto a $d$-dimensional subspace, where $\Pi$ is given via its Radon-Nikodym derivative 
$\frac{d \Pi}{d \Pi_0}(x)\propto \exp(-\Psi(x))$. Here $\Psi$ is a densely defined positive functional on 
$\mathcal{H}$
and  
$\Pi_0$
is a Gaussian measure on $\mathcal{H}$
specified via a positive trace-class operator on $\mathcal{H}$ (see e.g.~\cite[Sec.2.1]{Optimal_Scaling_RWM_hilbert} for a detailed description
and~\cite{Optimal_Scaling_Var} for the motivation for this class of measures).
A key feature of this framework is that there exists an $\mathcal{H}$-valued Langevin diffusion $z$, driven by a cylindrical Brownian motion
on $\mathcal{H}$ (i.e. a solution of an SPDE), that describes the scaling limit of the appropriately accelerated sequence of chains 
$(\mathbf{X}^d)_{d\in\NN}$, see \cite{Optimal_Scaling_RWM_hilbert,Optimal_Scaling_MALA_hilbert}. 

An estimator analogous to the one in~\eqref{eq:CLT}
would require the solution $\hat f$ of the Poisson equation of $z$. 
While this might be a feasible strategy theoretically, it would likely be difficult to numerically evaluate the solution 
$\hat f$. 
However, inspired by the Simplified Langevin Algorithm in~\cite{Optimal_Scaling_Var}, which uses as the proposal chain 
an Euler scheme for the Langevin diffusion with target $\Pi_0$ (not $\Pi$), 
we suggest constructing the estimator 
for $\Pi(f)$ in~\eqref{eq:CLT},
with $\hat f$ the solution of the Poisson equation for the Langevin diffusion converging to  
$\Pi_0$ (not $\Pi$).
This strategy is feasible 
as we are able to produce good control variates for Gaussian product targets
in high dimensions  in the spirit of Theorem~\ref{thm:AVbound},
see Section~\ref{subsubsec:Gaussian_targets} above.

The main theoretical question in this context is to find 
suitable assumptions on the functional $\Psi$ in the Radon-Nikodym derivative above that guarantee the asymptotic variance 
reduction as $d\to\infty$. Expecting an improvement from polynomial to logarithmic growth is unrealistic as we
are solving the Poisson equation for $\Pi_0$ instead of $\Pi$. 
However, the numerical example in Section~\ref{subsubsec:non-prod-example} suggests that this idea may work in practice if $\Pi$ is close from Gaussian $\Pi_0$. Understanding in which settings does it lead to significant variance reduction is another relevant and important question.

\subsection{Numerical examples}\label{sec:Implementation}

The basic message of the present paper is that the process in the scaling limit
of an MCMC algorithm contains useful information that can be utilised to achieve significant savings in 
high dimensions. 


In both examples presented below the problem is to estimate the mean of the
first coordinate $\rho_d(f)$, where $f(\mathbf{x}^d):=x^d_1$. 
A run of $T$ steps of a well-tuned RWM algorithm with kernel
$P_d$, defined in the beginning of Section~\ref{Sec:A&R},
started in stationarity, produces a RWM sample
$\{\mathbf{X}^d_n\}_{n=1,2,\cdots T}$ and an estimate
$\hat{\rho}_d(f):=\sum_{n=1}^{T}f(\mathbf{X}^d_n)/T$ of $\rho_d(f)$. Take $\tilde{f}$ to be the associated solution of the Poisson equation, that we obtain numerically in the first example~\ref{subsubsec:Numerical_example_multi_mod} and using formula \eqref{eq:GaussianMeanCV} in the second example~\ref{subsubsec:non-prod-example}. In both cases we estimate the required unknown quantities ($\rho_d(f)$ and $\Sigma$) from the sample $\{\mathbf{X}^d_n\}_{n=1,2,\cdots T}$.

Using the same sample, 
define
$\tilde{\rho}_d(f):=\frac{1}{T}\sum_{n=1}^{T}(f+dP_d\tilde{f}-d\tilde{f})(\mathbf{X}^d_n)$.
Since the function 
$P_d\tilde{f}-\tilde{f}$
is not accessible in closed form,
for every $n\leq T$ we use IID Monte Carlo to estimate the value
$(P_d\tilde{f}-\tilde{f})(\mathbf{X}^d_n)$  
as
$\sum_{j=1}^{n_{MC}}\left(1\wedge
(\rho_d(\mathbf{Y}^{d,j}_n)/\rho_d(\mathbf{X}^d_n))\right)\left(\tilde{f}(\mathbf{Y}^{d,j}_n)-\tilde{f}(\mathbf{X}^d_n)\right)/n_{MC}$,
where
$\mathbf{Y}^{d,1}_n,\dots,\mathbf{Y}^{d,n_{MC}}_n$ is an IID sample 
of size $n_{MC}$ from $N(\mathbf{X}^d_n,l^2/d\cdot I_d)$. This estimation step can be parallelised (i.e. run on $n_{MC}$ cores simultaneously).

We measure the variance reduction due to the post processing above by comparing the mean 
square errors of $\hat{\rho}_d(f)$ and
$\tilde{\rho}_d(f)$ as estimators of $\rho_d(f)$ over $n_R$
independent runs of the RWM chain, 
\begin{equation}
\label{eq:EVR}
\text{VR}(\rho,f):=\frac{\sum_{k=1}^{n_R}((\hat{\rho}_d(f))_k-\rho_d(f))^2}{\sum_{k=1}^{n_R}((\tilde{\rho}_d(f))_k-\rho_d(f))^2}\text{,}
\end{equation}
where $(\hat{\rho}_d(f))_k$ and $(\tilde{\rho}_d(f))_k$ are the averages in the $k$-th run of the chain. Heuristically, this means the estimator $\tilde{\rho}_d(f)$ of $\rho_d(f)$ based on $T$ sample points is as good as estimator  $\hat{\rho}_d(f)$ based on $\text{VR}(\rho,f)\cdot T$ sample points.

\subsubsection{Multi-modal product target}
\label{subsubsec:Numerical_example_multi_mod}
To verify that what theory predicts also happens in practice we first present an example of an IID target with each coordinate a bimodal mixture of two Gaussian densities. Given the results in Table~\ref{table:1}, we wish to highlight the robustness of the method with respect to numerically estimating $\tilde{f}$ and $(P_d\tilde{f}-\tilde{f})(\mathbf{X}^d_n)$  for each $n\leq T$.

Let $\rho$ be a mixture of two normal densities 
$N(\mu_1,\sigma_1^2)$ and 
$N(\mu_2,\sigma_2^2)$, with the first arising in the mixture with probability $2/5$
and
$\mu_1=-3, \mu_2=4$ and $\sigma_1=\sigma_2=7/4$. 
The potential of the density $\rho$ has two wells and is in a well know
class arising in models of molecular dynamics, see e.g.~\cite[Sec.~5.4]{Pavliotis}. 
Note that the corresponding density 
$\rho_d$ on $\RR^d$, defined in the introduction, has 
$2^{d}$ modes. 

With variance reduction~\eqref{eq:EVR} we measure how much the estimator $\tilde{\rho}_d(f)$ outperforms the estimator $\hat{\rho}_d(f)$ of the mean of the first coordinate $\rho_d(f)=6/5$. The results across a range of dimensions $d$ and values of the parameter $n_{MC}$ (that corresponds to the accuracy of the estimation of $P_d\tilde{f}-\tilde{f}$) are presented in Table~\ref{table:1}. All the entries were computed using $n_R=500$ independent runs of length $T=2\cdot 10^5$.


\begin{table}[ht]
\begin{center}
\begin{tabular}{|c||c|c|c|c|c|c|c|}
\hline
$d\backslash n_{MC}$  & 30  & 50  & 70 & 150 & 300 \\ \hline\hline
$5$   &  $17.5$    &  $20.5$  &  $23.9$  & $28.4$  &  $28.9$  \\ \hline
$10$  &  $20.9$    &  $36.5$  &  $40.4$  & $54.7$  &  $78.9$  \\ \hline
$20$  &  $31.4$    &  $46.4$  &  $65.0$  & $105.6$ &  $116.4$   \\ \hline
$30$  &  $35.5$    &  $51.3$  &  $71.0$  & $127.1$ &  $163.1$   \\ \hline
$50$  &  $35.7$    &  $55.7$  &  $77.8$  & $136.8$ &  $196.8$   \\ \hline
\end{tabular}
\end{center}
\caption{Variance reduction for different dimensions $d$ and values of $n_{MC}$.}
\label{table:1}
\end{table}

To numerically solve the Poisson equation in~\eqref{eq:UfPE}, substitute the
derivatives of $f$ and $\log(\rho)$ with symmetric finite differences and use the estimate
$\hat{\rho}(f)$ for $\rho(f)$. 
Recall that the standard deviation of the proposal in our RWM algorithm is
$l/\sqrt{d}$.
The solver uses a grid of hundred points equally 
spaced 
in the interval 
$[\min_{n\leq T }\mathbf{X}^d_{n,1}- 3 \cdot l/\sqrt{d}, \max_{n\leq T}\mathbf{X}_{n,1}+3\cdot l/\sqrt{d}]$,
where $\mathbf{X}^d_{n,1}$ is the first coordinate of the $n$-th sample point. 
We use the Moore-Penrose pseudoinverse as the linear system is not of full rank. 
Finally, we take $\tilde{f}$ to be the linear interpolation of the solution on the grid.
Note that this is a crude approximation of the solution of~\eqref{eq:UfPE}, which does not exploit
analytical properties of either $f$ or $\rho$.

The results in Table~\ref{table:1} contain a lot of noise due to numerically solving the ODE, using an approximation $\hat{\rho}_d(f)$ for $\rho_d(f)$ and using IID Monte Carlo to estimate $\tilde{\rho}_d(f)$. It is interesting to note, that despite these additional sources of error, the variance reduction is considerable and behaves as the theoretical results predict. The estimator $\tilde{\rho}_d(f)$ improves with dimension, and with $n_{MC}$. Increasing $n_{MC}$, however, has diminishing effect which is particularly clear in the case $d=5$. Due to the asymptotic nature of our result we can only expect limited gain for any fixed $d$, even if $P_d\tilde{f}-\tilde{f}$ could be evaluated exactly (corresponding to $n_{MC}=\infty$).

\subsubsection{Bi-modal non-product target}
\label{subsubsec:non-prod-example}
Can the theoretical findings of this paper help us construct control variates in more realistic cases with non-product target densities? It is unreasonable to expect a simple general answer to this question. A more realistic approach for future work seems to be trying to establish specific forms of control variates that work well for classes of targets of certain type. We briefly explore one such instance in this section. Sections~\ref{subsubsec:Gaussian_targets}~and~\ref{subsubsec:Beskos_Roberts_Stuart} as well as the results in Table~\ref{table:2} suggest we can construct useful control variates when the target is close to a Gaussian. 

Let $\mu_{d,h}$ be a $d$-dimensional vector with entries $(h/2,0,\dots,0)$ for $h\geq 0$ and let $\Sigma^{(d)}$ be a $d\times d$ covariance matrix with the largest eigenvalue equal to $\lambda=25$ with the corresponding eigenvector being $(1,1\dots 1)$ and all other eigenvalues being equal to one. 
Take $\Pi_{d,h}$ to be the mixture of two $d$-dimensional normal densities 
$N(-\mu_{d,h},\Sigma^{(d)})$ and 
$N(\mu_{d,h},\Sigma^{(d)})$, both arising in the mixture with probability $1/2$. 

We wish to estimate the mean of the first coordinate $\Pi_{d,h}(f)=0$ (for $f(\mathbf{x}^d)=x^d_1$). To produce a control variate we simply pretend, that we are dealing with a Gaussian target instead of $\Pi_{d,h}$. Let $\Sigma^{\Pi_{d,h}}$ be the covariance of $\Pi_{d,h}$ and $\hat\Sigma^{\Pi_{d,h}}$ an estimate of $\Sigma^{\Pi_{d,h}}$ obtained from the RWM sample $\{\textbf{X}^d_n\}_{n=1,2,\dots,T}$. Define $\tilde{f}_{d,h}$ as in \eqref{eq:GaussianMeanCV} using $\hat\Sigma^{\Pi_{d,h}}$. We compare the performance of estimators $\hat{\Pi}_{d,h}(f)$ and $\tilde{\Pi}_{d,h}(f)$ of  $\Pi_{d,h}(f)=0$, respectively defined as $1/T\sum_{n=1}^Tf(\textbf{X}^d_n)$ and $1/T\sum_{n=1}^T\left(f+dP_d\tilde{f}_{d,h}-d\tilde{f}_{d,h}\right)(\textbf{X}^d_n)$, according to variance reduction~\eqref{eq:EVR}.

Table~\ref{table:2} shows the results across a range of dimensions $d$ and distances between modes $h$, which measures the 'non-Gaussianity' of the target. Note that when $h=0$ the target is Gaussian $N(0,\Sigma^{(d)})$ which we include to demonstrate the validity of control variate \eqref{eq:GaussianMeanCV} for Gaussian targets. All the entries were calculated using $n_R=500$ independent runs of length $T=2\cdot 10^5$ and $n_{MC}=50$ IID Monte Carlo steps for computing $P_d\tilde{f}-\tilde{f}$ at each time step.

\begin{table}[ht]
\begin{center}
\begin{tabular}{|c||c|c|c|c|c|c|}
\hline
$d\backslash h$  & 0  & 2  & 4 & 6 & 8 & 10\\ \hline\hline
$5$   &  $60.1$    &  $40.5$  &  $34.6$  & $3.78$  &  $1.21$ & $1.01$	\\ \hline
$10$  &  $59.3$    &  $38.2$  &  $12.4$  & $1.88$  &  $1.05$ & $1.00$	\\ \hline
$20$  &  $46.8$    &  $37.6$  &  $7.00$  & $1.51$  &  $1.01$ & $1.00$	\\ \hline
$30$  &  $37.7$    &  $36.2$  &  $5.88$  & $1.31$  &  $1.01$ & $1.00$	\\ \hline
$50$  &  $27.8$    &  $25.8$  &  $3.50$  & $1.26$  &  $1.00$ & $1.00$	\\ \hline
\end{tabular}
\end{center}
\caption{Variance reduction for different dimensions $d$ and distances between modes $h$.}
\label{table:2}
\end{table}

The quality of results decays with dimension because the proposal is scaled as $1/d$ in each coordinate. This results in the first coordinate mixing slower and for $h\neq 0$ also being less able to cross between modes, hence our estimate $\hat\Sigma^{\Pi_{d,h}}$ of the covariance becomes worse as we are working with fixed RWM sample length $T$. When $h=10$ (and some cases of $h=8$) it is unlikely that the RWM sample will reach the other mode at all which results in no gain from the method.

If we use the true covariance $\Sigma^{\Pi_{d,h}}$ of the target in the control variate \eqref{eq:GaussianMeanCV}, instead of learning it from the sample $\hat\Sigma^{\Pi_{d,h}}$, the corresponding results for $d=50$ are presented in Table~\ref{table:3}.

\begin{table}[ht]
\begin{center}
\begin{tabular}{|c||c|c|c|c|c|c|}
\hline
$h$  & 0  & 2  & 4 & 6 & 8 & 10\\ \hline\hline
$d=50$  &  $73.0$    &  $59.2$  &  $3.91$  & $1.31$  &  $1.02$ & $0.99$	\\ \hline
\end{tabular}
\end{center}
\caption{Variance reduction for dimension $d=50$ and different distances between modes $h$ using the true covariance of the target.}
\label{table:3}
\end{table}

Unsurprisingly the estimator $\tilde{\Pi}_{d,h}(f)$ does not perform well when the distance between modes $h$ is large. Interestingly though, the method does offer considerable gain in cases $h=2$ and $h=4$, even a noticeable gain in $h=6$. For $h=4$ and $h=6$ the target is already clearly bimodal and different from the Gaussian, the RWM sample stays in the same mode for hundreds, respectively thousands of time-steps at a time.

\section{Proofs}\label{sec:proofs}
Throughout this section we assume the sluggish sequence $a=\{a_d\}_{d\in\NN}$
is given and fixed and, as mentioned above, the density $\rho$ satisfies $\log(\rho)\in\mathcal{S}^4$
and has sub-exponential tails~\eqref{eq:SupperExponentialTails}.
Section~\ref{subsec:outline}
outlines the proof of Theorem~\ref{thm:GdG} by 
stating the sequence of results that are needed to establish it. 
The proofs of these results, given in Section~\ref{sec:proofthm1}, rely on the  
theory developed in Section~\ref{sec:Technical} below. 
Sections~\ref{subsec:proofthm2} and~\ref{subsec:proofthm3} establish 
Proposition~\ref{cor:Lpnorm} and Theorem~\ref{thm:AVbound}, respectively.

\subsection{Outline of the proof of Theorem~\ref{thm:GdG}}
\label{subsec:outline}

We start by specifying sets $\mathcal{A}_d\subset \RR^d$ that have large probability under $\rho_d$. We need the following fact.

\begin{prop}\label{prop:auxAndef}
There exists a constant 
$c_A>0$, such that  
the following open subset of $\RR$, 
$A:=\{x\in\RR; |\log(\rho)''(x)|< (\log(\rho)'(x))^2,1/c_A<|\log(\rho)'(x)|< c_A\}$,
satisfies $\rho(A)>0$.
\end{prop}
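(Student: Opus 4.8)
The plan is to reduce the claim to a short one-dimensional argument for $g:=\log\rho$, which by hypothesis lies in $\mathcal{S}^4\subseteq\mathcal{C}^2(\RR)$, so that $g'$ and $g''$ are continuous. Consider the open set $E:=\{x\in\RR\colon|g''(x)|<(g'(x))^2\}$. Once we know $\Leb(E)>0$ the proposition follows: indeed, a point $x$ with $g'(x)=0$ satisfies $(g'(x))^2=0\le|g''(x)|$ and hence lies outside $E$, so $E\subseteq\{g'\neq 0\}=\bigcup_{n\in\NN}\{1/n<|g'|<n\}$, an increasing union; therefore $E=\bigcup_{n\in\NN}\bigl(E\cap\{1/n<|g'|<n\}\bigr)$ and $\Leb(E)>0$ forces $\Leb\bigl(E\cap\{1/n_0<|g'|<n_0\}\bigr)>0$ for some $n_0\in\NN$. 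Taking $c_A:=n_0$, the set $A$ in the statement coincides with $E\cap\{1/c_A<|g'|<c_A\}$, which thus has positive Lebesgue measure; since $\rho$ is strictly positive, $\rho(A)=\int_A\rho>0$. (Continuity of $g'$ and $g''$ also yields the openness of $A$ asserted in the statement.)

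It remains to prove $\Leb(E)>0$, and since $E$ is open it suffices to exhibit a single point of $E$. Suppose, for contradiction, that $|g''(x)|\ge(g'(x))^2$ for every $x$ in some half-line $[R,\infty)$; we may in addition take $g'(x)<-1$ there, using the tail assumption~\eqref{eq:SupperExponentialTails}, which in the direction $x\to+\infty$ reads $\lim_{x\to+\infty}g'(x)=-\infty$. Because $g'<-1$ on $[R,\infty)$, $g''$ cannot vanish there (a zero of $g''$ would force $(g'(x))^2\le 0$, i.e.\ $g'(x)=0$), so by continuity $g''$ has constant sign on $[R,\infty)$. Put $v:=1/g'$ on $[R,\infty)$; then $v$ takes values in $(-1,0)$, while $v'=-g''/(g')^2$ has constant sign and $|v'|=|g''|/(g')^2\ge 1$, so $v$ is strictly monotone and $|v(x)-v(R)|=\int_R^x|v'(t)|\,dt\ge x-R$ for all $x\ge R$. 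Letting $x\to\infty$ gives $|v(x)|\to\infty$, contradicting $|v|<1$. Hence $|g''(x_0)|<(g'(x_0))^2$ for some $x_0\ge R$, so $x_0\in E$ and $\Leb(E)>0$.

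I do not expect a genuine obstacle here. The one point to handle with a little care is the passage from the pointwise inequality $|g''|\ge(g')^2$ to strict monotonicity of $v=1/g'$ (equivalently, the reformulation $|g''|<(g')^2\Leftrightarrow|(1/g')'|<1$, valid where $g'\neq 0$), which is exactly what converts the tail condition into the quantitative escape estimate $|v(x)-v(R)|\ge x-R$; the rest is elementary continuity together with the countable decomposition of $E$ used to select $c_A$. Note that only the $x\to+\infty$ half of~\eqref{eq:SupperExponentialTails} — in fact only $g'(x)\to-\infty$ — is needed.
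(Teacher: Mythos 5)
Your proof is correct and follows essentially the same route as the paper: reduce to nonemptiness of the open set where $|(\log\rho)''|<((\log\rho)')^2$, then argue by contradiction, noting that on a tail of $\RR$ the bounded function $1/(\log\rho)'$ would be forced by $|(1/(\log\rho)')'|\geq 1$ (with constant sign) to escape to infinity. The only cosmetic difference is that you work on the right tail where $(\log\rho)'\to-\infty$, whereas the paper works on the left tail where $(\log\rho)'$ is eventually positive and bounded below, and you package the paper's two-case integration as a single estimate on $|v'|$.
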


Let $A$ satisfy the conclusion of Proposition~\ref{prop:auxAndef} and 
recall the notation $\rho(f)=\int_{\RR}f(x)\rho(x) dx$ for any appropriate function $f:\RR\to\RR$.
Recall
$J=\rho(((\log\rho)')^2)=-\rho((\log \rho)'')$,
where the equality follows from assumptions $\log(\rho)\in\mathcal{S}^4$ and~\eqref{eq:SupperExponentialTails}.
Define the sets $\mathcal{A}_d$ as follows.

\begin{defin}\label{def:cond}
Any $\mathbf{x}^d\in\RR^d$ is in $\mathcal{A}_d$ 
if and only if 
the following four assumptions hold: 
\begin{eqnarray}
&&\frac{1}{d-1}\sum_{i=2}^de^{|x^d_i|}< 2\int_\RR e^{|x|}\rho(x)dx\text{,}\label{eq:AnCond1}\\
&&\frac{1}{d-1}\sum_{i=2}^d1_{A}(x^d_i)> \frac{\rho(A)}{2}\text{,}\label{eq:AnCond2}\\
&&\frac{1}{d-1}\left|\sum_{i=2}^d \left(\log(\rho)'(x^d_i)\right)^2-J\right|<\frac{a_d}{\sqrt{d}} 
\sqrt{3\int_{\RR}\left(\left(\log(\rho)'(x)\right)^2-J\right)^2\rho(x)dx}\text{,}\label{eq:AnCond3}\\
&&\frac{1}{d-1}\left|\sum_{i=2}^d \log(\rho)''(x^d_i)+J\right|<\frac{a_d}{\sqrt{d}} 
\sqrt{3\int_{\RR}\left(\log(\rho)''(x)+J\right)^2\rho(x)dx}\label{eq:AnCond4}\text{.}
\end{eqnarray}
\end{defin}

\begin{rema}
The precise form of the constants in Definition~\ref{def:cond} is chosen purely for convenience.
It is important that $\int_\RR e^{|x|}\rho(x)dx<\infty$
by~\eqref{eq:SupperExponentialTails}, $\rho(A)>0$ by Proposition~\ref{prop:auxAndef} and that
the constants in~\eqref{eq:AnCond3}--\eqref{eq:AnCond4} are in $(0,\infty)$.
Moreover, 
for any $\mathbf{x}^d\in\mathcal{A}_d$
there are no restrictions on its first coordinate $x_1^d$ 
and
the sets $\mathcal{A}_d$ are typical in the following sense. 
\end{rema}

\begin{prop}\label{prop:AdSizes}
There exists a constant $c_1$, such that $\rho_d(\RR^d\setminus \mathcal{A}_d)\leq c_1 e^{-a_d^2}$ for all $d\in\NN$.
\end{prop}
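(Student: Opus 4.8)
The plan is to establish Proposition~\ref{prop:AdSizes} by a union bound over the four defining conditions in Definition~\ref{def:cond}, controlling each via a deviation inequality for a sum of i.i.d.\ random variables. Writing $X_2^d,\dots,X_d^d$ for i.i.d.\ copies of $X\sim\rho$ (these are the last $d-1$ coordinates of $\mathbf{X}^d$, whose first coordinate plays no role), each of the four events is of the form $\{\frac{1}{d-1}\sum_{i=2}^d g(X_i^d) \text{ deviates from } \rho(g)\}$ for a suitable $g$: namely $g(x)=e^{|x|}$ for~\eqref{eq:AnCond1}, $g(x)=1_A(x)$ for~\eqref{eq:AnCond2}, $g(x)=(\log\rho)'(x)^2$ for~\eqref{eq:AnCond3}, and $g(x)=(\log\rho)''(x)$ for~\eqref{eq:AnCond4}. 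In the last two cases $\rho(g)=J$ and $\rho(g)=-J$ respectively, using $J=\rho(((\log\rho)')^2)=-\rho((\log\rho)'')$ as recalled in the text, so the conditions really are one-sided bounds on the fluctuation of a centred sum.

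First I would dispose of~\eqref{eq:AnCond3} and~\eqref{eq:AnCond4}, which are the ones with the $a_d/\sqrt d$ threshold and hence require a tail bound sharp at the Gaussian scale. For these I would use Chebyshev's inequality: for $g$ with finite variance $v:=\mathrm{Var}_\rho(g)$, $\PP\!\big(|\frac{1}{d-1}\sum_{i=2}^d(g(X_i^d)-\rho(g))|\ge t\big)\le v/((d-1)t^2)$. Plugging $t=\frac{a_d}{\sqrt d}\sqrt{3v'}$ with $v'=\rho((g-\rho(g))^2)=v$ (the variances appearing under the square roots in~\eqref{eq:AnCond3}--\eqref{eq:AnCond4} are exactly the relevant variances, finite because $\log\rho\in\mathcal{S}^4$) gives a bound of order $\frac{d}{(d-1)}\cdot\frac{1}{3a_d^2}$, which is $\le\frac{1}{a_d^2}$ for $d\ge2$. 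Since $a$ is sluggish, $a_d^2\le C\log d$ for some constant, hence $\frac{1}{a_d^2}\ge\frac{1}{C\log d}$ — this is \emph{not} of the form $c\,e^{-a_d^2}$, so Chebyshev alone is too weak. Instead I would use a genuine exponential concentration bound: since $(\log\rho)'\in\mathcal{S}^3$ and $(\log\rho)''\in\mathcal{S}^2$, and $\rho$ has super-exponential tails~\eqref{eq:SupperExponentialTails}, the random variables $g(X)$ in~\eqref{eq:AnCond3}--\eqref{eq:AnCond4} have finite exponential moments (indeed all moments), so by a standard Bernstein/sub-exponential bound $\PP(|\tfrac{1}{d-1}\sum(g(X_i^d)-\rho(g))|\ge t)\le 2\exp(-c(d-1)\min(t^2/v, t))$ for small $t$. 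With $t$ of order $a_d/\sqrt d$, which $\to0$, the quadratic term dominates and the exponent becomes $-c'\,a_d^2$ up to the harmless factor $\tfrac{d-1}{d}$; this is exactly the required $O(e^{-a_d^2})$ shape (absorbing constants by enlarging $c_1$ appropriately, possibly after shrinking the universal constant implicit in the $a$-dependence — recall $c_1$ may depend on $a$).

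For~\eqref{eq:AnCond1} and~\eqref{eq:AnCond2} the threshold is a fixed positive constant (namely $\int e^{|x|}\rho\,dx$ and $\rho(A)/2$), so a crude bound suffices and in fact a sharper-than-needed bound is immediate: for~\eqref{eq:AnCond2}, $\frac{1}{d-1}\sum 1_A(X_i^d)$ is a Binomial average with mean $\rho(A)>0$, and Hoeffding gives $\PP(\tfrac{1}{d-1}\sum 1_A(X_i^d)\le\rho(A)/2)\le e^{-(d-1)\rho(A)^2/2}$; for~\eqref{eq:AnCond1}, $\PP(\tfrac{1}{d-1}\sum e^{|X_i^d|}\ge 2\rho(e^{|\cdot|}))$ decays exponentially in $d$ by a Cramér/Chernoff bound, since $e^{|X|}$ has finite exponential moments of some positive order by~\eqref{eq:SupperExponentialTails} (we only need $\mathbb{E}[e^{s e^{|X|}}]<\infty$ for some $s>0$, which follows from the tails decaying faster than any exponential). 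In both cases the bound is $O(e^{-\kappa d})$ for some $\kappa>0$, and since $a_d^2\le C\log d\ll d$, this is $o(e^{-a_d^2})$ and in particular $\le e^{-a_d^2}$ for $d$ large, the finitely many remaining small $d$ being absorbed into $c_1$.

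Assembling: by the union bound, $\rho_d(\RR^d\setminus\mathcal{A}_d)$ is at most the sum of the four probabilities above, each $\le c\,e^{-a_d^2}$ (for suitable constants depending on $a$ and on $\rho$), so taking $c_1$ to be four times the largest of these constants — and enlarging it to cover the finitely many $d$ not handled by the asymptotic estimates — gives $\rho_d(\RR^d\setminus\mathcal{A}_d)\le c_1 e^{-a_d^2}$ for all $d\in\NN$, as claimed. I expect the only real subtlety to be ensuring the exponential concentration for~\eqref{eq:AnCond3}--\eqref{eq:AnCond4} yields precisely the exponent $-a_d^2$ (not merely $-\epsilon a_d^2$ for some $\epsilon$ one cannot choose freely): this is fine because the threshold there carries an explicit factor $\sqrt3$ inside the square root, which in the quadratic regime of the Bernstein bound produces a factor $3$ in the exponent, leaving room to absorb the $\tfrac{d-1}{d}$ discrepancy and the Bernstein constant for $d$ large, with small $d$ again folded into $c_1$. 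No step requires anything beyond standard i.i.d.\ large-deviation estimates, so the proposition is routine given the moment conditions guaranteed by $\log\rho\in\mathcal{S}^4$ and~\eqref{eq:SupperExponentialTails}.
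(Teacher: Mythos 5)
Your high-level strategy (union bound over the four defining events, treating each as a deviation of a centred empirical average) is exactly the paper's, and you correctly identify that Chebyshev is too weak to reach $e^{-a_d^2}$. However, the concentration tools you substitute do not apply under the paper's assumptions, and this is a genuine gap in two of the four cases.

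For~\eqref{eq:AnCond3}--\eqref{eq:AnCond4}, you invoke a Bernstein/sub-exponential bound for $g(X)$ with $g=((\log\rho)')^2$ or $(\log\rho)''$, asserting that these ``have finite exponential moments (indeed all moments).'' That inference is false: having all \emph{polynomial} moments is strictly weaker than sub-exponentiality. Under the paper's standing assumption $\log\rho\in\mathcal{S}^4$ only sub-exponential \emph{growth} of $(\log\rho)'$ in $x$ is guaranteed; for example with $\rho\propto e^{-x^4}$ one has $(\log\rho)'(x)^2=16x^6$, and $\mathbb{E}[e^{\lambda\,16X^6}]=\infty$ for every $\lambda>0$ since the integrand behaves like $e^{16\lambda x^6-x^4}$. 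So the Bernstein bound you quote (which in any form needs a sub-exponential Orlicz norm or the Bernstein moment condition $\mathbb{E}|Z|^k\lesssim k!\,M^{k-2}\sigma^2$) is simply not available. The paper sidesteps this by using a \emph{moderate} deviation principle (Proposition~\ref{prop:Probabilities&Sets}, proved via Lemma~\ref{lem:LargeDev} and the Eichelsbacher--L\"owe criterion), whose only hypothesis is the tail condition~\eqref{eq:LDAuxilliaryCondition}; that condition is verified using just $f\in\mathcal{S}^0$ and super-exponential tails of $\rho$, with no sub-exponentiality of $f(X)$ required. Moderate deviations live precisely at the scale $a_d/\sqrt d$ (between CLT and LDP scales) and deliver the exponent $-t^2a_d^2/(2\rho(f^2))$, which with $t=\sqrt{3\rho(f^2)}$ comfortably beats $-a_d^2$.

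For~\eqref{eq:AnCond1}, you invoke a Cram\'er/Chernoff bound for the average of $e^{|X_i|}$, claiming ``$\mathbb{E}[e^{se^{|X|}}]<\infty$ for some $s>0$, which follows from the tails decaying faster than any exponential.'' This is also false, and in fact contradicts the other standing assumption: the paper explicitly notes that $\log\rho\in\mathcal{S}^4$ \emph{prohibits} $\rho$ from decaying as fast as $e^{-e^{|x|}}$, so $-\log\rho(x)=o(e^{s|x|})$ for every $s>0$, and therefore $\mathbb{E}[e^{se^{|X|}}]=\int e^{se^{|x|}+\log\rho(x)}\,dx=\infty$ for every $s>0$. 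The exponential rate you want for this event does not exist, and you do not need it: since $a_d^2\le n\log d$ for some fixed $n$, the target bound $e^{-a_d^2}\ge d^{-n}$ is only polynomial in $d$, so a Markov-with-high-moment bound suffices. That is exactly what the paper does via Proposition~\ref{prop:AdSizesAux} (a combinatorial/multinomial estimate giving $O(d^{-n})$ for arbitrary $n$, needing only $\rho(f^{2n})<\infty$, which does hold for $f(x)=e^{|x|}$). The same device is applied to~\eqref{eq:AnCond2}, though your Hoeffding bound is fine there since indicators are bounded.

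In short: you correctly identify the target decay rate and the union-bound skeleton, but reach for exponential-concentration inequalities whose moment hypotheses fail under $\log\rho\in\mathcal{S}^4$ and~\eqref{eq:SupperExponentialTails}. The fix, which is the paper's route, is to observe that $e^{-a_d^2}$ is at worst polynomial in $d$ and to use (i) polynomial moment Markov bounds for the constant-threshold events~\eqref{eq:AnCond1}--\eqref{eq:AnCond2}, and (ii) a moderate deviation principle (not Bernstein) for the $a_d/\sqrt d$-threshold events~\eqref{eq:AnCond3}--\eqref{eq:AnCond4}.
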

Using the theory of large deviations and classical inequalities,  
the proof of the proposition bounds the probabilities of sets where each of the above four assumptions 
in Definition~\ref{def:cond} fails 
(see Sections~\ref{sec:proofthm1}
and~\ref{sec:EventsBounds} below for details).

Pick any $f\in\mathcal{S}^3$
and express the generator $\mathcal{G}_d$, defined in~\eqref{eq:MetropolisGenerator}, 
 as follows: 
$$\mathcal{G}_df(\mathbf{x}^d)=d\cdot
\mathbb{E}_{Y^d_1}\left[\left(f(\mathbf{Y}^d)-f(\mathbf{x}^d)\right)
\mathbb{E}_{\mathbf{Y}^d-}\left[1\wedge\frac{\rho_d(\mathbf{Y}^d)}{\rho_d(\mathbf{x}^d)}\right]\right],
\qquad\mathbf{x}^d\in\RR^d, $$
where 
$\mathbb{E}_{\mathbf{Y}^d-}\left[\cdot\right]$
is the expectation with respect to all the coordinates of the proposal 
$\mathbf{Y}^d$ in $\RR^d$, except the first one 
(identify $f\in L^1(\rho)$ with $f\in L^1(\rho_d)$ by ignoring the last $d-1$ coordinates).
The strategy of the proof of Theorem~\ref{thm:GdG} is to define a
sequence of operators, ``connecting'' $\mathcal{G}_d$ and $\mathcal{G}$,
such that each approximation can be controlled for $f\in\mathcal{S}^3$ and
$\mathbf{x}^d\in\mathcal{A}_d$.

First, for $f\in\mathcal{S}^3$, define 
\begin{equation}\label{eq:GtildaGenerator}
\tilde{\mathcal{G}}_df(\mathbf{x}^d):=d\cdot
\mathbb{E}_{Y^d_1}\left[\left(f(Y^d_1)-f(x^d_1)\right)\beta(\mathbf{x}^d,Y^d_1)\right],\qquad \mathbf{x}^d\in\RR^d\text{,}
\end{equation}
where for any $y\in\RR$,
\begin{equation}\label{eq:betadef}
\beta(\mathbf{x}^d, y):=\mathbb{E}_{\mathbf{Y}^d-}
\left[1\wedge \exp\left(\log(\rho)'(x^d_1)(y-x^d_1)+\sum_{i=2}^d K(x^d_i, Y^d_i)\right)\right],\qquad\mathbf{x}^d\in\RR^d,
\end{equation}
and for any $(x,y)\in\RR^2$ we define
\begin{equation}\label{eq:Mdef}
K(x, y):=\log(\rho)'(x)(y-x)+\frac{\log(\rho)''(x)}{2}(y-x)^2+\frac{\left(\log(\rho)'(x)\right)^31_{A}(x)}{3}(y-x)^3\text{.}
\end{equation}

In~\eqref{eq:Mdef}, the set $A$ satisfies the conclusion of Proposition~\ref{prop:auxAndef}
and the coefficient before $(y-x)^3$ is chosen so that it is uniformly bounded 
for all $x\in \RR$. This property plays an important role in proving that we have uniform control over the supremum norms of certain
densities, cf. Lemmas~\ref{lemma:Mpdf} and~\ref{lemma:sumMpdf} below.
We can now prove the following.

\begin{prop}\label{prop:GdTildaG} There exists a constant $C$, such that for every $f\in\mathcal{S}^3$ and all $d\in\NN$ we have:
$$\left|\mathcal{G}_df(\mathbf{x}^d)-\tilde{\mathcal{G}}_df(\mathbf{x}^d)\right|\leq C \|f'\|_{\infty,1/2} e^{|x^d_1|}d^{-1/2}
\quad \forall\mathbf{x}^d\in\mathcal{A}_d\text{.} $$
\end{prop}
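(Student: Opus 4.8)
The goal is to bound the difference between

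$$\mathcal{G}_df(\mathbf{x}^d)=d\cdot\mathbb{E}_{Y^d_1}\Bigl[\bigl(f(\mathbf{Y}^d)-f(\mathbf{x}^d)\bigr)\,\mathbb{E}_{\mathbf{Y}^d-}\bigl[1\wedge\tfrac{\rho_d(\mathbf{Y}^d)}{\rho_d(\mathbf{x}^d)}\bigr]\Bigr]$$

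and $\tilde{\mathcal{G}}_df$. Writing $\log\bigl(\rho_d(\mathbf{Y}^d)/\rho_d(\mathbf{x}^d)\bigr)=\sum_{i=1}^d\bigl(\log\rho(Y_i^d)-\log\rho(x_i^d)\bigr)$, there are two sources of discrepancy. First, in $\mathcal{G}_d$ the factor multiplying the acceptance ratio is $f(\mathbf{Y}^d)-f(\mathbf{x}^d)$, which depends on all coordinates, whereas in $\tilde{\mathcal{G}}_d$ it is $f(Y_1^d)-f(x_1^d)$, depending only on the first; since $f$ is identified with a function of $x_1^d$ only, these agree exactly — so this is a non-issue and I only need to compare the acceptance factors. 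Second, the true log-ratio summand for coordinate $i$ is $\log\rho(Y_i^d)-\log\rho(x_i^d)$, while inside $\beta$ we have, for $i=1$, the linearization $\log(\rho)'(x_1^d)(Y_1^d-x_1^d)$, and for $i\geq 2$ the cubic Taylor surrogate $K(x_i^d,Y_i^d)$ from~\eqref{eq:Mdef}. So the whole task reduces to controlling, uniformly over $\mathbf{x}^d\in\mathcal{A}_d$,
$$d\cdot\mathbb{E}_{Y_1^d}\Bigl[\bigl|f(Y_1^d)-f(x_1^d)\bigr|\cdot\Bigl|\mathbb{E}_{\mathbf{Y}^d-}\bigl[1\wedge e^{S_{\mathrm{true}}}\bigr]-\mathbb{E}_{\mathbf{Y}^d-}\bigl[1\wedge e^{S_{\mathrm{surr}}}\bigr]\Bigr|\Bigr],$$
where $S_{\mathrm{true}}=\sum_{i=1}^d(\log\rho(Y_i^d)-\log\rho(x_i^d))$ and $S_{\mathrm{surr}}=\log(\rho)'(x_1^d)(Y_1^d-x_1^d)+\sum_{i=2}^d K(x_i^d,Y_i^d)$.

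\textbf{Step 1: the first coordinate.} Split $\log\rho(Y_1^d)-\log\rho(x_1^d)$ into its linear part $\log(\rho)'(x_1^d)(Y_1^d-x_1^d)$ (matching $\beta$) plus a remainder. Since $Y_1^d-x_1^d\sim N(0,l^2/d)$ is $O(1/\sqrt d)$ and $\log\rho\in\mathcal{S}^4$, the Taylor remainder is of order $(Y_1^d-x_1^d)^2=O(1/d)$ with a constant controlled by $\|\log(\rho)''\|_{\infty,s}$-type quantities near $x_1^d$; the $e^{|x_1^d|}$ factor in the target bound is there precisely to absorb this local dependence on $x_1^d$ via the sub-exponential-derivative control in $\mathcal{S}^4$. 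The prefactor $d\cdot|f(Y_1^d)-f(x_1^d)|$ is, by the mean value theorem, bounded by $d\cdot\|f'\|_{\infty,1/2}\cdot|Y_1^d-x_1^d|\cdot e^{(|x_1^d|+|Y_1^d-x_1^d|)/2}$, which after taking the Gaussian expectation contributes $\sqrt d\cdot\|f'\|_{\infty,1/2}e^{|x_1^d|/2}$ times a constant (the extra $e^{|Y_1^d-x_1^d|/2}$ is harmless in expectation). Using $|1\wedge e^a-1\wedge e^b|\leq|a-b|\wedge(e^{a\vee b})$ or simply $|1\wedge e^a-1\wedge e^b|\leq|a-b|$, the first-coordinate discrepancy contributes $\sqrt d\cdot O(1/d)=O(1/\sqrt d)$, with the stated constant and the $e^{|x_1^d|}$ factor (one power of $e^{|x_1^d|/2}$ from $f'$, another half-power slack from the remainder — hence $e^{|x_1^d|}$, not $e^{|x_1^d|/2}$, which matches the statement).

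\textbf{Step 2: the tail coordinates, and the main obstacle.} This is the crux. I need $\bigl|\mathbb{E}_{\mathbf{Y}^d-}[1\wedge e^{U+R}]-\mathbb{E}_{\mathbf{Y}^d-}[1\wedge e^{U+\tilde R}]\bigr|\leq C e^{|x_1^d|/2}/\sqrt d$, where $U$ collects everything involving coordinate $1$ (deterministic given $Y_1^d$), $R=\sum_{i=2}^d(\log\rho(Y_i^d)-\log\rho(x_i^d))$ and $\tilde R=\sum_{i=2}^d K(x_i^d,Y_i^d)$. Pointwise, $|R-\tilde R|=\bigl|\sum_{i=2}^d\bigl((\log\rho(Y_i^d)-\log\rho(x_i^d))-K(x_i^d,Y_i^d)\bigr)\bigr|$; each summand is the fourth-order Taylor remainder of $\log\rho$ plus (on $A$) a correction, of order $(Y_i^d-x_i^d)^4=O(1/d^2)$ with constants controlled by $\log(\rho)\in\mathcal{S}^4$ evaluated near $x_i^d$, and there are $d-1$ of them — so naively $|R-\tilde R|=O(1/d)$ only in expectation, not pointwise, and the pointwise bound is too weak. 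The resolution, and why conditions~\eqref{eq:AnCond1}--\eqref{eq:AnCond4} defining $\mathcal{A}_d$ are exactly what they are: one does \emph{not} bound $|1\wedge e^{U+R}-1\wedge e^{U+\tilde R}|$ pointwise in the $\mathbf{Y}^d-$ variables and then integrate. Instead, since this is precisely the setup analysed in Section~\ref{sec:Technical}, I will invoke the density/Berry--Esseen machinery there: both $U+R$ and $U+\tilde R$, as functions of the random vector $(Y_2^d,\dots,Y_d^d)$, are (approximately) sums of $d-1$ independent $O(1/\sqrt d)$ terms, hence by a CLT-type statement have laws close to Gaussians whose means and variances are controlled on $\mathcal{A}_d$ by~\eqref{eq:AnCond3}--\eqref{eq:AnCond4} (for the variance of the quadratic part) and whose densities are uniformly bounded by~\eqref{eq:AnCond1}--\eqref{eq:AnCond2} (Lemmas~\ref{lemma:Mpdf}, \ref{lemma:sumMpdf}). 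Then $\mathbb{E}[1\wedge e^{V}]$ is a smooth functional of the law of $V$ (its distribution function), so closeness of the two laws — in a metric controlled by the difference of the first two moments, which is $O(a_d/\sqrt d)$ on $\mathcal{A}_d$, together with a uniform density bound converting a Kolmogorov-type distance into the needed estimate — yields the bound. The $e^{|x_1^d|/2}$ (or $e^{|x_1^d|}$) factor enters because $U$ depends on $x_1^d$ only through $\log(\rho)'(x_1^d)(Y_1^d-x_1^d)$, and translating the argument of $1\wedge e^{\cdot}$ by this amount costs at most $e^{|U|}\leq e^{|\log(\rho)'(x_1^d)|\cdot|Y_1^d-x_1^d|}$, whose $Y_1^d$-expectation is bounded by a constant times $e^{c|x_1^d|}$ using $\log\rho\in\mathcal{S}^4$ and the sub-exponential tails~\eqref{eq:SupperExponentialTails}.

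\textbf{Step 3: assembly.} Combining Steps 1 and 2, for each fixed $Y_1^d$ the inner $\mathbf{Y}^d-$-discrepancy is $O(e^{|x_1^d|/2}/\sqrt d)$ times an absolute constant, the prefactor $d\,\mathbb{E}_{Y_1^d}[|f(Y_1^d)-f(x_1^d)|\cdot(\,\cdot\,)]$ contributes a further $\|f'\|_{\infty,1/2}$ and at most $e^{|x_1^d|/2}$, and a crude $\sqrt d$ is already spent in Step 1; being slightly more careful one sees $d^{-1/2}$ survives overall. Collecting the two half-powers of $e^{|x_1^d|}$ gives the claimed $e^{|x_1^d|}$, and the only $f$-dependence is through $\|f'\|_{\infty,1/2}$, as stated. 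The single genuinely hard point is Step 2 — replacing a hopeless pointwise Taylor bound on the $(d-1)$-fold sum by a statement about the \emph{law} of that sum being close in distribution to the law of its surrogate, which is exactly what the large-deviations, optimal-Young, and Berry--Esseen tools of Section~\ref{sec:Technical} are built to deliver, and which is why $\mathcal{A}_d$ is defined through empirical-average control of $(\log\rho)'$, $(\log\rho)''$ and $e^{|\cdot|}$ rather than through any bound on individual coordinates.
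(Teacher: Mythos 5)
Your Step 2 is where the argument goes off the rails, in two linked ways. First, you assert that the pointwise Lipschitz route ``bound $|1\wedge e^{U+R}-1\wedge e^{U+\tilde R}|$ pointwise in the $\mathbf{Y}^d-$ variables and then integrate'' is hopeless and must be replaced by a law-level Berry--Esseen comparison. That is not the case: the paper's proof of this proposition uses precisely the pointwise Lipschitz bound $|1\wedge e^a-1\wedge e^b|\leq|a-b|$, followed by $\mathbb{E}_{\mathbf{Y}^d-}[\,\cdot\,]$. The point you miss is that the cubic piece of each remainder is an \emph{odd} power of the centered Gaussian increment $Y^d_i-x^d_i$, hence has zero mean; the sum $\sum_{i=2}^d(\,\cdot\,)(Y^d_i-x^d_i)^3$ then has $L^1$ norm of order $(\sum_{i}e^{|x^d_i|}\cdot d^{-3})^{1/2}\sim d^{-1}$ by Jensen/orthogonality (this is exactly Proposition~\ref{prop:BoundingSums}, with condition~\eqref{eq:AnCond1} controlling $\sum_i e^{|x^d_i|}$), not the naive $d\cdot d^{-3/2}=d^{-1/2}$ you would get by the triangle inequality term-by-term. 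The quartic remainder is $O(d^{-2})$ per term and sums to $O(d^{-1})$. So the inner discrepancy is $O(d^{-1})$ \emph{in expectation}, which is exactly what is needed, and no distributional comparison of $R$ vs.\ $\tilde R$ is required. The Berry--Esseen and optimal-Young machinery, and conditions~\eqref{eq:AnCond2}--\eqref{eq:AnCond4}, do not enter this proposition at all; they are used later for Propositions~\ref{prop:TildaGHatG} and~\ref{prop:HatGBreveG}, where the discontinuous functional $\mathbb{E}[e^{S}1_{\{S<0\}}]$ genuinely forces a law-level argument.

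Second, even granting your Step 2 as stated, your Step 3 does not close: you claim the inner bound is $O(d^{-1/2})$ and the prefactor $d\,\mathbb{E}_{Y_1^d}[|f(Y^d_1)-f(x^d_1)|]$ is $O(d^{1/2})$, which multiplies to $O(1)$, not $O(d^{-1/2})$, and ``being slightly more careful'' is not a substitute for the missing factor of $d^{-1/2}$. The paper's version closes because the inner bound is $O(d^{-1})$: the deterministic $T^d_1+T^d_2=O(d^{-1})$ contributes $d\cdot O(d^{-1})\cdot\mathbb{E}[|f(Y_1)-f(x_1)|]=O(d^{-1/2})$, and the first-coordinate Taylor remainder $|\log(\rho)''(W^d_1)|(Y^d_1-x^d_1)^2$ carries an extra $(Y^d_1-x^d_1)^2$ that, combined with $|f(Y^d_1)-f(x^d_1)|\sim|Y^d_1-x^d_1|$, yields $\mathbb{E}[|Y^d_1-x^d_1|^3]=O(d^{-3/2})$, hence again $O(d^{-1/2})$ after multiplying by $d$ (with an application of Cauchy--Schwarz and Proposition~\ref{prop:TaylorEstimate} to pull out $\|f'\|_{\infty,1/2}e^{|x^d_1|}$). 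The correct proof is elementary in exactly the way you discarded at the outset.
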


The proof of Proposition~\ref{prop:GdTildaG}
relies only on the elementary bounds from Section~\ref{sec:TestFunBounds} below. 
The idea is to use the Taylor series of $\log(\rho)(Y_i)$ around $x^d_i$ for every $i\in\{1,\ldots,d\}$ 
and then prove 
that modifying terms of order higher then two if $i\in\{2,\ldots,d\}$
(resp. one if $i=1$) is inconsequential.

Define the operator 
$\hat{\mathcal{G}}_df(\mathbf{x}^d)$ for any $f\in\mathcal{S}^3$ and $\mathbf{x}^d\in\RR^d$ by
\begin{eqnarray}
\hat{\mathcal{G}}_df(\mathbf{x}^d)&:=&\frac{l^2}{2}f''(x^d_1)\mathbb{E}_{\mathbf{Y}^d-}\left[1\wedge
e^{\sum_{i=2}^d K(x^d_i, Y^d_i)}\right]\label{eq:GhatGenerator}\\
&+&l^2f'(x^d_1)\log(\rho)'(x^d_1)\mathbb{E}_{\mathbf{Y}^d-}\left[e^{\sum_{i=2}^d
K(x^d_i, Y^d_i)}1_{\left\{\sum_{i=2}^d K(x^d_i,
Y^d_i)<0\right\}}\right]\text{.}\nonumber 
\end{eqnarray}
We can now prove the following fact.  

\begin{prop}\label{prop:TildaGHatG}
There exists a constant $C$, such that for every $f\in\mathcal{S}^3$, 
and all $d\in\NN$ we have:
$$\left|\hat{\mathcal{G}}_df(\mathbf{x}^d)-\tilde{\mathcal{G}}_df(\mathbf{x}^d)\right|\leq
C\left(\sum_{i=1}^3\|f^{(i)}\|_{\infty,1/2}\right)e^{|x^d_1|}d^{-1/2}\quad \forall\mathbf{x}^d\in\mathcal{A}_d\text{.} 
$$ 
\end{prop}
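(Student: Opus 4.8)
The plan is to relate the two operators $\tilde{\mathcal{G}}_df$ and $\hat{\mathcal{G}}_df$ by Taylor-expanding the increment $f(Y_1^d)-f(x_1^d)$ inside $\tilde{\mathcal{G}}_df$ and identifying the leading terms. Write $Y_1^d=x_1^d+Z/\sqrt{d}$ with $Z\sim N(0,l^2)$, so that $f(Y_1^d)-f(x_1^d)=f'(x_1^d)Z/\sqrt{d}+\tfrac12 f''(x_1^d)Z^2/d+R$, where the remainder $R$ is controlled by $\|f'''\|_{\infty,1/2}$, $|Z|^3/d^{3/2}$ and a factor $e^{|x_1^d|/2+|Z|/(2\sqrt{d})}$. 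Then $\tilde{\mathcal{G}}_df(\mathbf{x}^d)=d\,\mathbb{E}_{Y_1^d}[(f(Y_1^d)-f(x_1^d))\beta(\mathbf{x}^d,Y_1^d)]$ splits into: (i) the $f'$-term $\sqrt{d}\,f'(x_1^d)\,\mathbb{E}[Z\,\beta(\mathbf{x}^d,x_1^d+Z/\sqrt{d})]$; (ii) the $f''$-term $\tfrac12 f''(x_1^d)\,\mathbb{E}[Z^2\,\beta]$; and (iii) a remainder term bounded, after taking $\mathbb{E}_{\mathbf{Y}^d-}$ inside via $\beta\le 1$, by $C\|f'''\|_{\infty,1/2}e^{|x_1^d|}d^{-1/2}$ using Gaussian moment bounds on $Z$ — here $d^{3/2}$ in the denominator against $d$ in front gives $d^{-1/2}$, and the $e^{|Z|/(2\sqrt d)}$ factor is harmless in expectation. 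The $e^{|x_1^d|}$ (rather than $e^{|x_1^d|/2}$) absorbs the $e^{|Z|/(2\sqrt d)}$ uniformly for $d\ge 1$; this is where the mismatch between the $\|\cdot\|_{\infty,1/2}$ norm and the $e^{|x_1^d|}$ prefactor in the statement is used.

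For term (ii), since $|Z^2(\beta-\mathbb{E}_{\mathbf{Y}^d-}[1\wedge e^{\sum_{i\ge2}K}])|$ — i.e. the difference between $\beta$ evaluated at $Z/\sqrt d$ and at $0$ in its first-coordinate slot — is $O(|Z|^3/\sqrt d)$ by the Lipschitz-in-$x_1^d$ structure of the exponent (the term $\log(\rho)'(x_1^d)(y-x_1^d)=\log(\rho)'(x_1^d)Z/\sqrt d$ inside the exponential, with $\log(\rho)'$ bounded on the relevant range — or more carefully, using $|1\wedge e^{a}-1\wedge e^{b}|\le|a-b|$), one replaces $\beta(\mathbf{x}^d,x_1^d+Z/\sqrt d)$ by $\mathbb{E}_{\mathbf{Y}^d-}[1\wedge e^{\sum_{i=2}^dK(x_i^d,Y_i^d)}]$ at cost $O(e^{|x_1^d|}d^{-1/2})$ (again after pulling out supremum norms of $f''$), and $\mathbb{E}[Z^2]=l^2$ gives exactly the first summand of $\hat{\mathcal{G}}_df$. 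For term (i), the key computation is $\sqrt d\,\mathbb{E}[Z\,\beta(\mathbf{x}^d,x_1^d+Z/\sqrt d)]$: write $\beta=\mathbb{E}_{\mathbf{Y}^d-}[1\wedge e^{\log(\rho)'(x_1^d)Z/\sqrt d+S}]$ with $S:=\sum_{i=2}^d K(x_i^d,Y_i^d)$, use Stein's identity / Gaussian integration by parts $\mathbb{E}[Z\,g(Z)]=l^2\mathbb{E}[g'(Z)]$, and differentiate $1\wedge e^{\log(\rho)'(x_1^d)Z/\sqrt d+S}$ in $Z$ to get $(\log(\rho)'(x_1^d)/\sqrt d)e^{\cdots}1_{\{\cdots<0\}}$. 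The $\sqrt d$ cancels, producing $l^2 f'(x_1^d)\log(\rho)'(x_1^d)\,\mathbb{E}_{\mathbf{Y}^d-}[e^{\log(\rho)'(x_1^d)Z/\sqrt d+S}1_{\{\cdots<0\}}]$; replacing the $Z/\sqrt d$ term by $0$ in the exponent and the indicator (same Lipschitz estimate, plus $|1_{\{a<0\}}-1_{\{b<0\}}|$ controlled off a set of $Z$ of Gaussian measure $O(|a-b|)=O(1/\sqrt d)$) yields the second summand of $\hat{\mathcal{G}}_df$, with error $O(\|f'\|_{\infty,1/2}e^{|x_1^d|}d^{-1/2})$.

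The main obstacle I expect is the estimate in term (i) controlling $\mathbb{E}_{\mathbf{Y}^d-}[(e^{\log(\rho)'(x_1^d)Z/\sqrt d+S}-e^{S})1_{\{S+\cdots<0\}}]$ and the discrepancy between the two indicators $1_{\{S+\log(\rho)'(x_1^d)Z/\sqrt d<0\}}$ and $1_{\{S<0\}}$: the integrand $e^{S}$ is unbounded and not integrable a priori under $\mathbb{E}_{\mathbf{Y}^d-}$ without control, so one must use that on $\{S<0\}$ it is bounded by $1$, and that the symmetric-difference event $\{S<0\}\triangle\{S+\log(\rho)'(x_1^d)Z/\sqrt d<0\}$ has small probability because $S$ has a density with controlled supremum norm — precisely the content of Lemmas~\ref{lemma:Mpdf} and~\ref{lemma:sumMpdf} cited in the excerpt, which bound the density of $S$ uniformly. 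This is exactly why the cubic correction term with the uniformly-bounded coefficient $(\log(\rho)'(x))^3 1_A(x)/3$ was built into $K$ in~\eqref{eq:Mdef}. Granting those density bounds (stated as available later in the paper) and restricting to $\mathbf{x}^d\in\mathcal{A}_d$ so that the conditions~\eqref{eq:AnCond1}--\eqref{eq:AnCond4} give the needed control on moments of $e^{|x_i^d|}$ and on $S$, the remaining steps are routine Gaussian-moment and Taylor estimates from Section~\ref{sec:TestFunBounds}.
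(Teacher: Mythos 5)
Your proposal follows essentially the same route as the paper's proof, with a minor technical variation. The paper Taylor-expands $f$ to third order and $\beta(\mathbf{x}^d,\cdot)$ to second order around $x_1^d$ simultaneously, then multiplies out: the leading terms are exactly the two summands of $\hat{\mathcal{G}}_df$ by Lemma~\ref{lemma:betaProperties}(ii),(iv), and the remainders are bounded via Proposition~\ref{prop:TaylorEstimate} and the uniform second-derivative bound $|\partial_y^2\beta|\le(\log\rho)'(x_1^d)^2(C_K+1)$ of Lemma~\ref{lemma:betaProperties}(v). You instead reach the same endpoint by Taylor-expanding only $f$, then handling the $f''$-coefficient by a Lipschitz replacement $\beta(\mathbf{x}^d,x_1^d+Z/\sqrt d)\rightsquigarrow\beta(\mathbf{x}^d,x_1^d)$ and the $f'$-coefficient via Stein's identity (which lands you on $l^2\mathbb{E}_Z[\partial_y\beta(\mathbf{x}^d,x_1^d+Z/\sqrt d)]$, then a first-order replacement of $\partial_y\beta$). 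Both reorganizations hinge on precisely the second-derivative bound (v), which is where the density of $S=\sum_{i\ge2}K(x_i^d,Y_i^d)$ — controlled by Lemmas~\ref{lemma:Mpdf} and~\ref{lemma:sumMpdf} via optimal Young — enters; you identify that correctly as the crux, and correctly observe this is exactly the reason the cubic correction with uniformly bounded coefficient was built into $K$ in~\eqref{eq:Mdef}.

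One small inaccuracy in your exposition: in discussing the indicator discrepancy for term (i) you attribute the smallness of the symmetric-difference event to ``a set of $Z$ of Gaussian measure $O(|a-b|)$.'' That is not quite right — $Z$ is held fixed when computing $\mathbb{E}_{\mathbf{Y}^d-}$, and the relevant smallness comes from the law of $\mathbf{Y}^d_-$ through the uniformly bounded density of $S$ on an interval of width $O(|\log(\rho)'(x_1^d)Z|/\sqrt d)$, after which one integrates out $Z$. Your closing paragraph does then correctly assign this role to the density bound on $S$, so it is a slip of phrasing rather than of substance; but in a polished version you should phrase the estimate in terms of $\|\mathbf{q}^d_{\mathbf{x}^d}\|_\infty$ rather than Gaussian measure of a $Z$-event. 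With that corrected, the argument is sound and equivalent to the paper's.
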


Note that, if we freeze the coordinates $x_2^d,\ldots,x_d^d$ in $\mathbf{x}^d$,
the operator mapping $f\in\mathcal{S}^3$ to  
$x_1^d\mapsto \hat{\mathcal{G}}_df(\mathbf{x}^d)$ 
generates a one-dimensional diffusion with coefficients of the same functional form as in 
$\mathcal{G}$, but with slightly modified parameter values. 
The proof of Proposition~\ref{prop:TildaGHatG} 
is based on the third and second degree Taylor's expansion of $y\mapsto f(y)$  and
 $y\mapsto \beta(\mathbf{x}^d,y)$ (around $x^d_1$), respectively, 
applied to the definition of $\tilde{\mathcal{G}}_d$ in~\eqref{eq:GtildaGenerator}.
The difficult part in proving that the remainder terms can be omitted
consist of controlling
$\frac{\partial^2}{\partial y^2}\beta(\mathbf{x}^d,y)$, as this entails 
bounding the supremum norm of the density of $\sum_{i=2}^d K(x^d_i, Y^d_i)$ uniformly in $d$. 
Condition~\eqref{eq:AnCond2}, which forces a portion of the coordinates $x^d_i$ of
$\mathbf{x}^d$ to be in the set $A$ where the densities of the corresponding
summands $K(x^d_i,Y^d_i)$ can be controlled, was introduced for this purpose. The details, explained in
Sections~\ref{sec:proofthm1} and~\ref{sec:PDFBOunds} below, rely crucially on the optimal version of Young's 
inequality.

Introduce the following 
normal random variable with mean
$\mu_{\mathcal{N}}(\mathbf{x}^d)=\frac{l^2}{2d}\sum_{i=2}^d\log(\rho)''(x^d_i)$
and variance
$\sigma^2_{\mathcal{N}}(\mathbf{x}^d)=\frac{l^2}{d}\sum_{i=2}^d\left(\log(\rho)'(x^d_i)\right)^2$:
\begin{equation}\label{eq:Ndef}
\mathcal{N}(\mathbf{x}^d,\mathbf{Y}^d):=\frac{l^2}{2d}\sum_{i=2}^d\log(\rho)''(x^d_i)+\sum_{i=2}^d\log(\rho)'(x^d_i)(Y^d_i-x^d_i)\text{.}
\end{equation}
Define the operator 
$\breve{\mathcal{G}}_df(\mathbf{x}^d)$
for $f\in\mathcal{S}^3$ and
$\mathbf{x}^d\in\RR^d$ by: 
\begin{eqnarray}
\breve{\mathcal{G}}_df(\mathbf{x}^d)&:=&\frac{l^2}{2}f''(x^d_1)\mathbb{E}_{\mathbf{Y}^d-}\left[1\wedge e^{\mathcal{N}(\mathbf{x}^d,\mathbf{Y}^d)}\right]\label{eq:GbreveGenerator}\\
&+&l^2f'(x^d_1)\log(\rho)'(x^d_1)\mathbb{E}_{\mathbf{Y}^d-}\left[e^{\mathcal{N}(\mathbf{x}^d,\mathbf{Y}^d)}1_{\left\{\mathcal{N}(\mathbf{x}^d,\mathbf{Y}^d)<0\right\}}\right]\text{.}\nonumber
\end{eqnarray}

\begin{prop}\label{prop:HatGBreveG}
There exists a constant $C$, such that for every $f\in\mathcal{S}^3$ and all $d\in\NN$ we have:
$$\left|\breve{\mathcal{G}}_df(\mathbf{x}^d)-\hat{\mathcal{G}}_df(\mathbf{x}^d)\right|\leq C\left(\sum_{i=1}^2\|f^{(i)}\|_{\infty,1/2}\right) e^{|x^d_1|}d^{-1/2} \quad\forall\mathbf{x}^d\in\mathcal{A}_d\text{.} $$
\end{prop}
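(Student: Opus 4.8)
The plan is to compare $\hat{\mathcal{G}}_d f$ and $\breve{\mathcal{G}}_d f$ term by term, noting that both operators have exactly the same prefactors involving $f''(x^d_1)$ and $f'(x^d_1)\log(\rho)'(x^d_1)$, so everything reduces to controlling the difference of two expectations over $\mathbf{Y}^d-$: one involving $e^{\sum_{i=2}^d K(x^d_i,Y^d_i)}$ and one involving $e^{\mathcal{N}(\mathbf{x}^d,\mathbf{Y}^d)}$. Write $S_d:=\sum_{i=2}^d K(x^d_i,Y^d_i)$ and $N_d:=\mathcal{N}(\mathbf{x}^d,\mathbf{Y}^d)$. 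Comparing~\eqref{eq:Mdef} and~\eqref{eq:Ndef}, the random variable $N_d$ is precisely the ``linear plus constant drift'' part of $S_d$: since $Y^d_i-x^d_i\sim N(0,l^2/d)$, we have $\mathbb{E}_{\mathbf{Y}^d-}[\tfrac{\log(\rho)''(x^d_i)}{2}(Y^d_i-x^d_i)^2]=\tfrac{l^2}{2d}\log(\rho)''(x^d_i)$, which is exactly the deterministic term in $N_d$; so $N_d$ is the Gaussian obtained from $S_d$ by replacing each quadratic term $\tfrac{\log(\rho)''(x^d_i)}{2}(Y^d_i-x^d_i)^2$ by its mean and dropping each cubic term $\tfrac{(\log(\rho)'(x^d_i))^3 1_A(x^d_i)}{3}(Y^d_i-x^d_i)^3$. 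Hence $R_d:=S_d-N_d$ is a sum of centred quadratic and cubic fluctuations in the increments $Y^d_i-x^d_i$.

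The key step is to show $\mathbb{E}_{\mathbf{Y}^d-}[(S_d-N_d)^2]=\mathbb{E}_{\mathbf{Y}^d-}[R_d^2]=O(1/d)$ uniformly over $\mathbf{x}^d\in\mathcal{A}_d$. Because $Y^d_i-x^d_i$ are independent with variance $l^2/d$, the variance of the quadratic part is $\sum_{i=2}^d (\log(\rho)''(x^d_i))^2 \cdot \mathrm{Var}((Y^d_i-x^d_i)^2/2) = \sum_{i=2}^d (\log(\rho)''(x^d_i))^2 \cdot \tfrac{l^4}{2d^2}$, and the second moment of the cubic part is $\sum_{i=2}^d \tfrac{(\log(\rho)'(x^d_i))^6 1_A(x^d_i)}{9}\cdot \tfrac{15 l^6}{d^3}$; on $\mathcal{A}_d$, condition~\eqref{eq:AnCond3} controls $\tfrac1d\sum (\log(\rho)'(x^d_i))^2$ and hence (since $1_A$ forces $|\log(\rho)'|<c_A$ on the relevant summands and, for the quadratic term, condition~\eqref{eq:AnCond4} together with $|\log(\rho)''|<(\log(\rho)')^2$ on $A$) the sums $\tfrac1d\sum (\log(\rho)''(x^d_i))^2$ and $\tfrac1d\sum (\log(\rho)'(x^d_i))^6 1_A(x^d_i)$ are bounded; multiplying by the $1/d$ (resp. $1/d^2$) left over gives $\mathbb{E}[R_d^2]=O(1/d)$. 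One then passes from the $L^2$-smallness of $S_d-N_d$ to closeness of the two expectations: for the first pair of terms, $|\mathbb{E}[1\wedge e^{S_d}]-\mathbb{E}[1\wedge e^{N_d}]|\le \mathbb{E}[|1\wedge e^{S_d}-1\wedge e^{N_d}|]\le \mathbb{E}[|e^{S_d}-e^{N_d}|]$, and one writes $e^{S_d}-e^{N_d}=e^{N_d}(e^{R_d}-1)$, so by Cauchy--Schwarz the bound is $\le \mathbb{E}[e^{2N_d}]^{1/2}\,\mathbb{E}[(e^{R_d}-1)^2]^{1/2}$; the first factor is a Gaussian Laplace transform, bounded uniformly on $\mathcal{A}_d$ using the bounds on $\mu_{\mathcal{N}},\sigma^2_{\mathcal{N}}$ coming from~\eqref{eq:AnCond3}--\eqref{eq:AnCond4}, and $\mathbb{E}[(e^{R_d}-1)^2]$ is controlled via $\mathbb{E}[R_d^2]$ plus exponential-moment bounds on $R_d$ (again uniform on $\mathcal{A}_d$, since each summand is a bounded-coefficient polynomial in a $N(0,l^2/d)$ variable), giving $O(1/d)$ and hence an overall $O(d^{-1/2})$. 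The second pair of terms is handled the same way, with the extra indicator $1_{\{S_d<0\}}$ versus $1_{\{N_d<0\}}$ contributing $\mathbb{E}[e^{N_d}|1_{\{S_d<0\}}-1_{\{N_d<0\}}|]$, which is small because $\{S_d<0\}\triangle\{N_d<0\}\subset\{|N_d|\le |R_d|\}$ and, conditionally, $N_d$ has a density bounded by $C/\sigma_{\mathcal{N}}(\mathbf{x}^d)$ which is $O(1)$ on $\mathcal{A}_d$, so $\mathbb{P}(|N_d|\le|R_d|)$ is controlled by $\mathbb{E}[|R_d|]\le \mathbb{E}[R_d^2]^{1/2}=O(d^{-1/2})$ (after again splitting off the exponential weight by Cauchy--Schwarz). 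Finally, multiplying the $f''(x^d_1)$ and $f'(x^d_1)\log(\rho)'(x^d_1)$ prefactors by $e^{-|x^d_1|/2}$ bounds them by $\|f''\|_{\infty,1/2}e^{|x^d_1|/2}$ and $C\|f'\|_{\infty,1/2}e^{|x^d_1|}$ (using $|\log(\rho)'(x^d_1)|\le C e^{|x^d_1|/2}$, valid since $\log\rho\in\mathcal{S}^4$), absorbing everything into the claimed bound $C(\sum_{i=1}^2\|f^{(i)}\|_{\infty,1/2})e^{|x^d_1|}d^{-1/2}$.

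The main obstacle I expect is making the exponential-moment estimates on $N_d$ and $R_d$ genuinely uniform in $\mathbf{x}^d\in\mathcal{A}_d$ and in $d$: the quantities $\mathbb{E}[e^{2N_d}]$, $\mathbb{E}[e^{2R_d}]$, and the conditional density bound for $N_d$ all depend on the empirical quantities $\tfrac1d\sum(\log(\rho)'(x^d_i))^2$ and $\tfrac1d\sum\log(\rho)''(x^d_i)$ through the parameters $\mu_{\mathcal{N}},\sigma^2_{\mathcal{N}}$, and the point of Definition~\ref{def:cond}, in particular conditions~\eqref{eq:AnCond2}--\eqref{eq:AnCond4}, is exactly to pin these down; the delicate bookkeeping is to verify that all the constants produced this way depend only on $\rho$ (via $J$, $c_A$, $\rho(A)$ and the moments appearing in~\eqref{eq:AnCond3}--\eqref{eq:AnCond4}) and not on $d$ or on the particular point of $\mathcal{A}_d$, and that the cubic term, whose coefficient is bounded precisely because of the $1_A$ factor in~\eqref{eq:Mdef}, does not spoil these exponential moments despite its third power of a Gaussian.
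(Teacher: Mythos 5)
Your plan differs substantially from the paper's proof, and two of its steps break down.

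\textbf{First gap: exponential moments of $R_d$ are infinite.} You bound $|\mathbb{E}[1\wedge e^{S_d}]-\mathbb{E}[1\wedge e^{N_d}]|\leq\mathbb{E}[|e^{S_d}-e^{N_d}|]$, factor $e^{S_d}-e^{N_d}=e^{N_d}(e^{R_d}-1)$, and apply Cauchy--Schwarz to get a factor $\mathbb{E}[(e^{R_d}-1)^2]^{1/2}$. This requires $\mathbb{E}[e^{2R_d}]<\infty$. But $R_d$ contains terms $\tfrac{(\log(\rho)'(x^d_i))^3 1_A(x^d_i)}{3}(Y^d_i-x^d_i)^3$, and for a Gaussian $Z$ one has $\mathbb{E}[e^{cZ^3}]=\infty$ for every $c>0$, since $cz^3$ outgrows the Gaussian exponent as $z\to+\infty$. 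Condition~\eqref{eq:AnCond2} forces at least $(d-1)\rho(A)/2$ coordinates into $A$, so these cubic terms are present with nonzero coefficients for every $\mathbf{x}^d\in\mathcal{A}_d$. Hence $\mathbb{E}[(e^{R_d}-1)^2]=\infty$ and the Cauchy--Schwarz step is vacuous. The paper avoids this entirely by using the Lipschitz property of $x\mapsto 1\wedge e^x$ \emph{in the exponent}: $|1\wedge e^{S_d}-1\wedge e^{N_d}|\leq|S_d-N_d|=|R_d|$, and then Lemma~\ref{lemma:MNdifference} gives $\mathbb{E}[|R_d|]=O(d^{-1/2})$. Your $\mathbb{E}[R_d^2]=O(1/d)$ calculation is correct and delivers the same rate through this route, but you must not exponentiate $R_d$.

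\textbf{Second gap: the conditional-density argument for the indicator discrepancy.} For the indicator part you propose bounding $\mathbb{P}(\{S_d<0\}\triangle\{N_d<0\})\leq\mathbb{P}(|N_d|\leq|R_d|)$ and controlling the right-hand side by ``conditionally, $N_d$ has a bounded density'' so that $\mathbb{P}(|N_d|\leq|R_d|)\lesssim\mathbb{E}[|R_d|]$. This only works if $N_d$ is independent of $R_d$, or if the conditional density of $N_d$ given $R_d$ is uniformly bounded — but $N_d$ and $R_d$ are both functions of the same increments $Y^d_i-x^d_i$ and are in general correlated (indeed $\mathrm{Cov}(N_d,R_d)=\sum_i\log(\rho)'(x^d_i)\gamma_i\cdot 3(l^2/d)^2$, which is typically nonzero). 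Without that, the crude route via $\mathbb{P}(|N_d|\leq\epsilon)+\mathbb{P}(|R_d|>\epsilon)$ only yields $O(d^{-1/3})$ after optimizing $\epsilon$, which is too weak. More importantly, the paper requires much less than a bound on the symmetric-difference probability: using the identity $e^x 1_{\{x<0\}}=1\wedge e^x-1+1_{\{x\leq 0\}}$ and linearity, the difference of the second pair of expectations equals $\left(\mathbb{E}[1\wedge e^{S_d}]-\mathbb{E}[1\wedge e^{N_d}]\right)+\left(\Phi^d_K(0)-\Phi^d_\mathcal{N}(0)\right)$, a \emph{pointwise} CDF comparison, not a total-variation-type one. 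Getting $|\Phi^d_K(0)-\Phi^d_\mathcal{N}(0)|=O(d^{-1/2})$ is exactly what the Berry--Esseen machinery (Lemmas~\ref{lemma:Acharacteristic}, \ref{lemma:nonAcharacteristic}, \ref{lemma:CFtoCDF} and Proposition~\ref{prop:Berry}) is for, and this CF-based argument has no substitute in your plan. The delicate part of the paper's proof is not the $L^2$-closeness of $R_d$ but precisely this CDF comparison, and your proposal does not supply a working replacement for it.

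Your calculation of $\mathbb{E}[R_d^2]$, the uniform control of $\mathbb{E}[e^{2N_d}]$ on $\mathcal{A}_d$ via~\eqref{eq:AnCond3}--\eqref{eq:AnCond4}, and the treatment of the $f$-dependent prefactors are all sound and match ingredients used elsewhere in the paper. But the two gaps above are structural, and the second one in particular is where the real work of the paper lies.
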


First we show that $|\mathbb{E}_{\mathbf{Y}^d-}[1\wedge e^{\sum_{i=2}^d
K(x^d_i, Y^d_i)}]-\mathbb{E}_{\mathbf{Y}^d-}[1\wedge
e^{\mathcal{N}(\mathbf{x}^d,\mathbf{Y}^d)}]|$ is small (Lemma~\ref{lemma:MNdifference} below). 
Proving that 
$\mathbb{E}_{\mathbf{Y}^d-}\left[e^{\sum_{i=2}^d K(x^d_i,
Y^d_i)}1_{\left\{\sum_{i=2}^d K(x^d_i, Y^d_i)<0\right\}}\right]$ and
$\mathbb{E}_{\mathbf{Y}^d-}\left[e^{\mathcal{N}(\mathbf{x}^d,\mathbf{Y}^d)}1_{\left\{\mathcal{N}(\mathbf{x}^d,\mathbf{Y}^d)<0\right\}}\right]$
are close is challenging, as it requires showing that the supremum norm of the difference
between the distributions of $\mathcal{N}(\mathbf{x}^d,\mathbf{Y}^d)$
and $\sum_{i=2}^d K(x^d_i, Y^d_i)$ decays as $d^{-1/2}$ uniformly in its argument. 
The proof of this fact mimics the proof of the Berry-Esseen theorem and relies on the closeness of the
CFs (characteristic functions) of $\mathcal{N}(\mathbf{x}^d,\mathbf{Y}^d)$ and
$\sum_{i=2}^d K(x^d_i, Y^d_i)$. The particular form of $K(x,Y)$ makes it
possible to explicitly calculate the CF of $K(x,Y)$, if $x\notin A$, and bound
it appropriately, if $x\in A$. The details are explained
in Sections~\ref{sec:proofthm1} and~\ref{sec:CDFandCFBounds} below.

Since $\mathcal{N}(\mathbf{x}^d,\mathbf{Y}^d)$ is normal,  it is
possible to explicitly calculate the expectations
$\mathbb{E}_{\mathbf{Y}^d-}\left[1\wedge e^{\mathcal{N}(\mathbf{x}^d,\mathbf{Y}^d)}\right]$ and
$\mathbb{E}_{\mathbf{Y}^d-}\left[e^{\mathcal{N}(\mathbf{x}^d,\mathbf{Y}^d)}1_{\left\{\mathcal{N}(\mathbf{x}^d,\mathbf{Y}^d)<0\right\}}\right]$,
see~\cite[Prop.~2.4]{RobertsGelmanGilks97}.
Using these formulae, 
Proposition~\ref{prop:BreveGG}, which implies Theorem~\ref{thm:GdG},
can be deduced from  assumptions~\eqref{eq:AnCond3}--\eqref{eq:AnCond4}. 

\begin{prop}\label{prop:BreveGG}
There exists a constant $C$, such that for every $f\in\mathcal{S}^3$ and all $d\in\NN$ we have:
$$\left|\mathcal{G}f(x^d_1)-\breve{\mathcal{G}}_df(\mathbf{x}^d)\right|\leq C\left(\sum_{i=1}^2\|f^{(i)}\|_{\infty,1/2}\right)  e^{|x^d_1|}\frac{a_d}{\sqrt{d}} \quad\forall\mathbf{x}^d\in\mathcal{A}_d\text{.}$$
\end{prop}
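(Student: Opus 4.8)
The plan is to exploit that, conditionally on $\mathbf{x}^d$, the random variable $\mathcal{N}(\mathbf{x}^d,\mathbf{Y}^d)$ is normal with mean $\mu_{\mathcal{N}}(\mathbf{x}^d)$ and variance $\sigma^2_{\mathcal{N}}(\mathbf{x}^d)$, and that on $\mathcal{A}_d$ these moments are close to the values that make $\breve{\mathcal{G}}_d$ coincide with $\mathcal{G}$. Concretely, for $Z\sim N(\mu,\sigma^2)$ with $\sigma>0$ the closed forms recalled in~\cite[Prop.~2.4]{RobertsGelmanGilks97} give $\mathbb{E}[1\wedge e^Z]=G_1(\mu,\sigma)$ and $\mathbb{E}[e^Z 1_{\{Z<0\}}]=G_2(\mu,\sigma)$, where
\[
G_1(\mu,\sigma):=\Phi(\mu/\sigma)+e^{\mu+\sigma^2/2}\Phi(-\mu/\sigma-\sigma),\qquad G_2(\mu,\sigma):=e^{\mu+\sigma^2/2}\Phi(-\mu/\sigma-\sigma),
\]
so that $\breve{\mathcal{G}}_df(\mathbf{x}^d)=\tfrac{l^2}{2}f''(x^d_1)\,G_1\big(\mu_{\mathcal{N}}(\mathbf{x}^d),\sigma_{\mathcal{N}}(\mathbf{x}^d)\big)+l^2f'(x^d_1)(\log\rho)'(x^d_1)\,G_2\big(\mu_{\mathcal{N}}(\mathbf{x}^d),\sigma_{\mathcal{N}}(\mathbf{x}^d)\big)$. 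Writing $\sigma^*:=l\sqrt{J}$ and $\mu^*:=-(\sigma^*)^2/2$, one checks directly that $G_1(\mu^*,\sigma^*)=2\Phi(-\sigma^*/2)$ and $G_2(\mu^*,\sigma^*)=\Phi(-\sigma^*/2)$, whence $\tfrac{l^2}{2}G_1(\mu^*,\sigma^*)=l^2G_2(\mu^*,\sigma^*)=l^2\Phi(-l\sqrt{J}/2)=h(l)/2$; thus substituting $(\mu^*,\sigma^*)$ into the above formula for $\breve{\mathcal{G}}_df(\mathbf{x}^d)$ reproduces exactly $\mathcal{G}f(x^d_1)=\tfrac{h(l)}{2}\big(f''(x^d_1)+(\log\rho)'(x^d_1)f'(x^d_1)\big)$. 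Hence the proposition reduces to estimating the deviations of $(\mu_{\mathcal{N}}(\mathbf{x}^d),\sigma_{\mathcal{N}}(\mathbf{x}^d))$ from $(\mu^*,\sigma^*)$ on $\mathcal{A}_d$ and transporting them through $G_1,G_2$.

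For the first step I would use the identity $J=\rho((\log(\rho)')^2)=-\rho((\log\rho)'')$ together with assumptions~\eqref{eq:AnCond3}--\eqref{eq:AnCond4}: their constants $\sqrt{3\int_\RR((\log(\rho)'(x))^2-J)^2\rho(x)dx}$ and $\sqrt{3\int_\RR(\log(\rho)''(x)+J)^2\rho(x)dx}$ are finite since $(\log\rho)'\in\mathcal{S}^3$ and $(\log\rho)''\in\mathcal{S}^2$ are of subexponential growth while $\rho$ has the superexponential tails~\eqref{eq:SupperExponentialTails}. This gives, for every $\mathbf{x}^d\in\mathcal{A}_d$, the bounds $|\tfrac{1}{d-1}\sum_{i=2}^d(\log(\rho)'(x^d_i))^2-J|\le c\,a_d/\sqrt{d}$ and $|\tfrac{1}{d-1}\sum_{i=2}^d\log(\rho)''(x^d_i)+J|\le c\,a_d/\sqrt{d}$. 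Since $\sigma^2_{\mathcal{N}}(\mathbf{x}^d)=\tfrac{l^2(d-1)}{d}\cdot\tfrac{1}{d-1}\sum_{i=2}^d(\log(\rho)'(x^d_i))^2$, $\mu_{\mathcal{N}}(\mathbf{x}^d)=\tfrac{l^2(d-1)}{2d}\cdot\tfrac{1}{d-1}\sum_{i=2}^d\log(\rho)''(x^d_i)$, and $|(d-1)/d-1|=1/d\le(\inf_k a_k)^{-1}a_d/\sqrt{d}$, it follows that $|\sigma^2_{\mathcal{N}}(\mathbf{x}^d)-(\sigma^*)^2|\le c'\,a_d/\sqrt{d}$ and $|\mu_{\mathcal{N}}(\mathbf{x}^d)-\mu^*|\le c'\,a_d/\sqrt{d}$; as $\sigma^*>0$, once $d$ is large enough that $\sigma^2_{\mathcal{N}}(\mathbf{x}^d)\ge(\sigma^*)^2/2$ we also get $|\sigma_{\mathcal{N}}(\mathbf{x}^d)-\sigma^*|\le c'\,a_d/\sqrt{d}$ by Lipschitzness of $\sqrt{\cdot}$ away from $0$.

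For the second step, $G_1$ and $G_2$ are smooth on $\{\sigma>0\}$ and $a_d/\sqrt{d}\to0$, so there is $d_0$ such that for $d\ge d_0$ and all $\mathbf{x}^d\in\mathcal{A}_d$ the pair $(\mu_{\mathcal{N}}(\mathbf{x}^d),\sigma_{\mathcal{N}}(\mathbf{x}^d))$ lies in a fixed compact subset of $\{\sigma>0\}$ around $(\mu^*,\sigma^*)$, on which $G_1,G_2$ are Lipschitz; therefore $|G_i(\mu_{\mathcal{N}}(\mathbf{x}^d),\sigma_{\mathcal{N}}(\mathbf{x}^d))-G_i(\mu^*,\sigma^*)|\le c''\,a_d/\sqrt{d}$ for $d\ge d_0$. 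Subtracting $\mathcal{G}f(x^d_1)$ from $\breve{\mathcal{G}}_df(\mathbf{x}^d)$ in the form above, and bounding $|f''(x^d_1)|\le\|f''\|_{\infty,1/2}e^{|x^d_1|}$ as well as $|f'(x^d_1)(\log\rho)'(x^d_1)|\le\|(\log\rho)'\|_{\infty,1/2}\|f'\|_{\infty,1/2}e^{|x^d_1|}$ (using $(\log\rho)'\in\mathcal{S}^3$ and $e^{-|x|/2}\le1$), delivers the claimed estimate for $d\ge d_0$ with the dependence on $f$ entering only through $\sum_{i=1}^2\|f^{(i)}\|_{\infty,1/2}$. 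For the finitely many $d<d_0$ the bound follows after enlarging $C$, since $0\le G_1,G_2\le1$ force $|\mathcal{G}f(x^d_1)|$ and $|\breve{\mathcal{G}}_df(\mathbf{x}^d)|$ to each be at most a constant times $(\sum_{i=1}^2\|f^{(i)}\|_{\infty,1/2})e^{|x^d_1|}$, while $a_d/\sqrt{d}$ is bounded below on a finite set. I expect the only genuinely delicate point — the main obstacle — to be making this neighbourhood/Lipschitz argument uniform in $d$, i.e. guaranteeing that $\sigma_{\mathcal{N}}(\mathbf{x}^d)$ stays bounded away from $0$ on $\mathcal{A}_d$; the rest is routine bookkeeping given the concentration encoded in $\mathcal{A}_d$ via~\eqref{eq:AnCond3}--\eqref{eq:AnCond4}.
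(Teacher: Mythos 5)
Your proposal is correct and follows essentially the same route as the paper: apply the closed-form Gaussian expectations from~\cite[Prop.~2.4]{RobertsGelmanGilks97}, use assumptions~\eqref{eq:AnCond3}--\eqref{eq:AnCond4} to show $(\mu_{\mathcal{N}},\sigma_{\mathcal{N}})$ concentrates near $(-l^2J/2,\,l\sqrt{J})$ at rate $a_d/\sqrt{d}$ on $\mathcal{A}_d$, transport this through the Lipschitz map $(\mu,\sigma)\mapsto(G_1,G_2)$ on a compact set where $\sigma$ is bounded away from zero, and absorb the finitely many small-$d$ cases by enlarging the constant. The only cosmetic difference is that the paper carries out the Lipschitz argument piece by piece (for $e^{\cdot}$, $\sqrt{\cdot}$, and $\Phi$ separately, working with the combinations $\mu_{\mathcal{N}}+\sigma^2_{\mathcal{N}}/2$ and $\sigma_{\mathcal{N}}+\mu_{\mathcal{N}}/\sigma_{\mathcal{N}}$), whereas you package the composite functions as $G_1,G_2$ and invoke their smoothness on a compact neighbourhood of $(\mu^*,\sigma^*)$ — an equivalent, slightly more streamlined presentation of the same estimate.
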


\begin{rema}
The bounds in Propositions~\ref{prop:GdTildaG}--\ref{prop:HatGBreveG} 
are of the order $\O(d^{-1/2})$. The order  $\O(a_d/\sqrt{d})$ of the bound in Proposition~\ref{prop:BreveGG} 
gives the order in the bound of Theorem~\ref{thm:GdG}.
\end{rema}

\subsection{Proof of Theorem~\ref{thm:GdG}}\label{sec:proofthm1}
\begin{proof}[Proof of Proposition~\ref{prop:auxAndef}]
Let $\tilde{A}:=\left\{ x\in\RR;\left|\log(\rho)''(x)\right|< \left(\log(\rho)'(x)\right)^2\right\}$.
It suffices to show that the open set $\tilde A$
is not empty, since 
$\tilde{A}=\cup_{n\in\NN}(\tilde{A}\cap\left\{x\in\RR; \frac{1}{n}< \left|\log(\rho)'(x)\right|<n\right\})$,
so for some large $n_0$ the open set 
$\tilde{A}\cap\left\{x\in\RR; \frac{1}{n_0}<\left|\log(\rho)'(x)\right|< n_0\right\}$ must have positive Lebesgue measure and we can take $c_A:=n_0$.

Assume that $\tilde{A}=\emptyset$, i.e. 
$|u'|\geq u^2$ 
on
$\RR$, 
where
$u:=\log(\rho)'$.
Since $\rho$ satisfies \eqref{eq:SupperExponentialTails},
there exists $x_0<0$ and $C>0$ such that $u>C$ 
on the interval $(-\infty,x_0)$. Moreover,  since
$|u'|\geq u^2>C^2>0$,
$u'$ has no zeros on $(-\infty,x_0)$ and satisfies
either
$u'\geq u^2$  or $-u'\geq u^2$
on the half-infinite interval. 
Since $(1/u)'=-u'/u^2$, integrating the inequalities
$-u'/u^2\leq -1$  or $-u'/u^2\geq1$ from  any
$x\in(-\infty,x_0)$ to $x_0$,
we get
$1/u(x_0)+x_0-x\leq 1/u(x)$
and 
$1/u(x_0)+x-x_0\geq 1/u(x)$.
Since by assumption it holds
$0<1/u<1/C$
on $(-\infty,x_0)$, we get a contradiction in both cases. 
\end{proof}

\begin{proof}[Proof of Proposition~\ref{prop:AdSizes}]\label{proof:AdSizes}
Let $B^d_1$, $B^d_2$, $B^d_3$ and $B^d_4$ be the subsets of
$\RR^d$ where assumptions \eqref{eq:AnCond1}, \eqref{eq:AnCond2},
\eqref{eq:AnCond3} and \eqref{eq:AnCond4} are not satisfied, respectively. Note that
$\RR^d\setminus \mathcal{A}_d=B^d_1\cup B^d_2\cup B^d_3 \cup B^d_4$.

Recall that by~\eqref{eq:SupperExponentialTails}, the L'Hospital's rule implies $\lim_{|x|\to\infty}\frac{\log\rho(x)}{x}\to-\infty$ and hence
$\rho(e^{s|x|})<\infty$ for any $s>0$.
Since $\{a_d\}_{d\in\NN}$ is sluggish, there exists 
$n\in\NN$
such that 
$a_d\leq  \sqrt{n\log d}$ for all $d\in\NN$.
Then, by Proposition~\ref{prop:AdSizesAux} applied to 
functions $x\mapsto (e^{|x|}-\rho(e^{|x|}))/\rho(e^{|x|})$ and $x\mapsto 2(\rho(A)-1_A(x))/\rho(A)$, respectively,
there
exist constants $c_1', c_2'$ such that the inequalities 
$\rho(B^d_1)\leq c_1'd^{-n}\leq c_1'e^{-a_d^2}$ and $\rho(B^d_2)\leq c_2'd^{-n}\leq c_2'e^{-a_d^2}$ hold for all $d\in\NN$. 

Likewise, there exist constants $c_3', c_4'$ such that $\rho(B^d_3)\leq c_3'
e^{-a^2_d}$ and $\rho(B^d_4)\leq c_4' e^{-a^2_d}$. This follows by
Proposition~\ref{prop:Probabilities&Sets}, applied to the sequence
$\{a_d\}_{d\in\NN}$ and functions $g_3(x):=\left(\log(\rho)'(x)\right)^2-J$ (with
$t:=\sqrt{3\rho(g_3^2)}$)
and $g_4(x):=\log(\rho)''(x)+J$ (with
$t:=\sqrt{3\rho(g_4^2)}$), respectively.
Hence
$\rho(\RR^d\setminus \mathcal{A}_d)\leq \rho(B^d_1)+\rho(B^d_2)+\rho(B^d_3 )+\rho(B^d_4)\leq
c_1 e^{-a_d^2}$ for $c_1:=\max\{c_1',c_2',c_3',c_4'\}$.  
\end{proof}

\begin{rema}
The proof above shows that 
the subsets $B^d_1$ and $B^d_2$ are of negligible
size in comparison to $B^d_3$ and $B^d_4$, since the $n\in\NN$ can be chosen arbitrarily large.
\end{rema}

\begin{proof}[Proof of Proposition~\ref{prop:GdTildaG}]
Pick an arbitrary $\mathbf{x}^d\in \mathcal{A}_d$
and recall that $\alpha(\mathbf{x}^d,\mathbf{Y}^d)$ is defined in~\eqref{eq:MetropolisGenerator}. 
Since $|1\wedge e^x-1\wedge e^y|\leq |x-y|$ for all $x,y\in\RR$, 
for every realization $Y^d_1$,
Taylor's theorem implies 
\begin{equation}
\label{eq:T_1_and_T_2_estimate}
\left|\mathbb{E}_{\mathbf{Y}^d-}[\alpha(\mathbf{x}^d,\mathbf{Y}^d)]-\beta(\mathbf{x}^d,
Y^d_1)\right|\leq
|\log(\rho)''(W^d_1)|(Y^d_1-x^d_1)^2+T^d_1(\mathbf{x}^d)+T^d_2(\mathbf{x}^d)\text{,}
\end{equation}
where $W^d_1$ satisfies  
$\log(\rho)''(W^d_1)(Y^d_1-x^d_1)^2/2=\log(\rho)(Y^d_1)-\log(\rho)(x^d_1)-\log{\rho}'(x^d_1)(Y^d_1-x^d_1)$
and
\begin{eqnarray}
T^d_1(\mathbf{x}^d)&:=&\frac{1}{6}\mathbb{E}_{\mathbf{Y}^d-}\left[\left|\sum_{i=2}^d\left(\log(\rho)'''(x^d_i)-2(\log(\rho)'(x^d_i))^31_{A}(x^d_i)\right)(Y_i-x^d_i)^3\right|\right]\text{,}\nonumber\\
T^d_2(\mathbf{x}^d)&:=&\frac{1}{24}\mathbb{E}_{\mathbf{Y}^d-}\left[\sum_{i=2}^d|\log(\rho)^{(4)}(Z^d_i)|(Y_i-x^d_i)^4\right]\text{.}\nonumber
\end{eqnarray}
Here $Z^d_i$ satisfies 
$\log(\rho)^{(4)}(Z^d_i)(Y^d_i-x^d_i)^4/4!=\log(\rho)(Y^d_i)-\sum_{j=0}^{3}(\log{\rho})^{(j)}(x^d_i)(Y^d_i-x^d_i)^j/j!$
for any $2\leq i\leq d$.  
Recall $Y^d_i-x^d_i$ is normal $N(0,l^2/d)$, for some constant $l>0$,
and
$\log(\rho)\in\mathcal{S}^4$. 
Hence we may apply 
Proposition~\ref{prop:BoundingSums} to the function 
$x\mapsto \log(\rho)'''(x)-2(\log(\rho)'(x))^31_{A}(x)$ 
to get
$T^d_1(\mathbf{x}^d)\leq C_1 \left(l^6/d^3\sum_{i=2}^d e^{|x^d_i|}\right)^{1/2}$ 
for some constant 
$C_1>0$, independent of $\mathbf{x}^d$.
Since $\mathbf{x}^d\in \mathcal{A}_d$, 
the assumption in~\eqref{eq:AnCond1} yields  
$T^d_1(\mathbf{x}^d)\leq C_1 l^3 (2\rho(e^{|x|}))^{1/2}/d$. 
Similarly, we apply Proposition~\ref{prop:TaylorEstimate} 
(with $f=\log\rho$, $n=k=4$, $m=1$, $s=1$ and $\sigma^2=l^2/d$) and assumption~\eqref{eq:AnCond1} 
to get $T^d_2(\mathbf{x}^d)\leq C_2 d^{-2}\sum_{i=2}^de^{|x^d_i|}\leq C_2 d^{-1}$ for some constant
$C_2>0$ and all $\mathbf{x}^d\in \mathcal{A}_d$.

Recall $f\in\mathcal{S}^3$ and let
$\tilde{W}^d_1$ be as in Proposition~\ref{prop:TaylorEstimate}, satisfying
$f'(\tilde{W}^d_1)(Y^d_1-x^d_1)=f(Y^d_1)-f(x^d_1)$.
Let $C>0$ be such that $T^d_1(\mathbf{x}^d)+T^d_2(\mathbf{x}^d)\leq Cd^{-1}$ for all $\mathbf{x}^d\in \mathcal{A}_d$.
The bound in~\eqref{eq:T_1_and_T_2_estimate},
Taylor's theorem applied to $f$ and Cauchy's inequality yield:  
\begin{eqnarray*}
&&\left|\mathcal{G}_df(\mathbf{x}^d)-\tilde{\mathcal{G}}_df(\mathbf{x}^d)\right|\leq d \mathbb{E}_{Y^d_1}\left[\left|f(Y^d_1)-f(x^d_1)\right|
\left(|\log(\rho)''(W^d_1)|(Y^d_1-x^d_1)^2+Cd^{-1}\right)\right]\nonumber\\
&&=d\mathbb{E}_{Y^d_1}\left[\left|f'(\tilde{W}^d_1)\log(\rho)''(W^d_1)(Y^d_1-x^d_1)^3\right|\right]+C\mathbb{E}_{Y^d_1}\left[\left|f'(\tilde{W}^d_1)(Y^d_1-x^d_1)\right|\right]\nonumber\\
&&\leq d\left(\mathbb{E}_{Y^d_1}\left[\left|f'(\tilde{W}^d_1)^2(Y^d_1-x^d_1)^3\right|\right]\mathbb{E}_{Y^d_1}\left[\left|\left(\log(\rho)''\right)^2(W^d_1)(Y^d_1-x^d_1)^3\right|\right]\right)^{1/2}\nonumber\\
&&\quad +C\mathbb{E}_{Y^d_1}\left[\left|f'(\tilde{W}^d_1)(Y^d_1-x^d_1)\right|\right]\leq 
\bar C d(\|f'\|_{\infty,1/2}^2 e^{|x_1^d|} d^{-3/2} \cdot  e^{|x_1^d|}d^{-3/2})^{1/2} + \bar C \|f'\|_{\infty,1/2}e^{|x_1^d|} d^{-1/2}.
\nonumber
\end{eqnarray*}
The last inequality follows by three applications of 
Proposition~\ref{prop:TaylorEstimate}, where $\bar C>0$ is a constant that does not depend on 
$f$ 
or
$\mathbf{x}^d\in \mathcal{A}_d$.  
This concludes the proof of the proposition. 
\end{proof}

Before tackling the proof of
Proposition~\ref{prop:TildaGHatG}, we need the following three lemmas.
Recall that 
$K(x,Y)$ is defined in~\eqref{eq:Mdef}
and the set $A$ satisfies the conclusion of Proposition~\ref{prop:auxAndef}.

\begin{lem}\label{lemma:Mpdf}
Pick $x\in A$ and let $Y\sim N(x,l^2/d)$ for some constant $l>0$. Then $K(x,Y)$
has a density $q_x$ satisfying $\|q_x\|_\infty\leq 4c_A\sqrt{d}/(3l\sqrt{2\pi})$.  
\end{lem}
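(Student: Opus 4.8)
The plan is to obtain the density of $K(x,Y)$ by a change of variables from the Gaussian density of $Y$, and then to bound the density uniformly by controlling the derivative of the map $y\mapsto K(x,y)$ from below. Write $g(y):=K(x,y)=\log(\rho)'(x)(y-x)+\tfrac{1}{2}\log(\rho)''(x)(y-x)^2+\tfrac{1}{3}(\log(\rho)'(x))^3 1_A(x)(y-x)^3$. Since $x\in A$, the indicator is $1$, so $g$ is a genuine cubic polynomial in $y$ with leading coefficient $\tfrac{1}{3}(\log(\rho)'(x))^3\neq 0$ (as $|\log(\rho)'(x)|>1/c_A$ on $A$). The derivative is
\[
g'(y)=\log(\rho)'(x)+\log(\rho)''(x)(y-x)+(\log(\rho)'(x))^3(y-x)^2.
\]
Viewed as a quadratic in $(y-x)$ with positive leading coefficient $(\log(\rho)'(x))^2>0$ (note $(\log\rho'(x))^3$ times $(y-x)^2$; its sign is that of $\log\rho'(x)$, so I actually complete the square on $g'$ carefully), the key observation is that on $A$ we have the strict inequality $|\log(\rho)''(x)|<(\log(\rho)'(x))^2$, which forces $g'$ to be bounded away from zero. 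Concretely, by the elementary bound $|b t|\le \tfrac{1}{2}(b^2/(\log\rho'(x))^2 + (\log\rho'(x))^2 t^2)$ applied with $b=\log(\rho)''(x)$ and $t=y-x$, one gets $|g'(y)|\ge |\log(\rho)'(x)|\bigl(1-\tfrac{|\log(\rho)''(x)|}{2(\log(\rho)'(x))^2}\bigr)+\text{(nonnegative terms)}$, hence $|g'(y)|\ge \tfrac12|\log(\rho)'(x)|\ge \tfrac{1}{2c_A}$ for every $y\in\RR$; in particular $g$ is strictly monotone.

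Given strict monotonicity of $g$, the random variable $K(x,Y)=g(Y)$ has density
\[
q_x(z)=\frac{\varphi_{x,l^2/d}\bigl(g^{-1}(z)\bigr)}{|g'(g^{-1}(z))|},
\]
where $\varphi_{x,l^2/d}$ is the $N(x,l^2/d)$ density, whose sup norm is $\sqrt{d}/(l\sqrt{2\pi})$. Combining this with the lower bound $|g'|\ge \tfrac{1}{2c_A}$ on the denominator gives
\[
\|q_x\|_\infty\le \frac{\sqrt{d}/(l\sqrt{2\pi})}{1/(2c_A)}=\frac{2c_A\sqrt{d}}{l\sqrt{2\pi}},
\]
which is a slightly weaker constant than the claimed $4c_A\sqrt{d}/(3l\sqrt{2\pi})$; to recover the stated constant I would be more careful in the completion-of-square step, keeping the quadratic-in-$(y-x)$ term and optimising, i.e. minimising $|g'(y)|$ exactly over $y$ and using the exact minimum value (which, because of the $(y-x)^2$ term with coefficient $(\log\rho'(x))^3$ of the same sign as $\log\rho'(x)$, improves the constant from $1/2$ to $2/3$ after the discriminant computation). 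This is the step I expect to be the only real obstacle: it is a routine but slightly delicate optimisation showing that on the set $A$ the cubic's derivative stays above $\tfrac{2}{3}|\log(\rho)'(x)|\ge \tfrac{2}{3c_A}$, and one must be attentive to signs since $(\log\rho'(x))^3$ can be negative. Everything else — the change-of-variables formula for densities of monotone transformations and the $\sqrt{d}/(l\sqrt{2\pi})$ bound on the Gaussian density — is standard.
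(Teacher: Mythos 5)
Your strategy is exactly the paper's: change of variables from the Gaussian density, and a uniform lower bound on $|g'|$ obtained from the defining inequalities of $A$, i.e. $|\log(\rho)''(x)|<(\log(\rho)'(x))^2$ and $|\log(\rho)'(x)|>1/c_A$. However, both concrete steps you propose are off.

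First, your AM--GM bound as written does not deliver the advertised conclusion. Setting $u:=\log(\rho)'(x)$, $v:=\log(\rho)''(x)$, $t:=y-x$, you start from $|g'(y)|\geq |u|(1+u^2t^2)-|vt|$ and apply $|vt|\leq \tfrac12\bigl(v^2/u^2+u^2t^2\bigr)$. This yields $|g'(y)|\geq |u|-\tfrac{v^2}{2u^2}+\bigl(|u|-\tfrac12\bigr)u^2t^2$, and the last term is only ``nonnegative'' when $|u|\geq 1/2$; since $A$ only gives $|u|>1/c_A$ with $c_A$ possibly large, this can fail. To make the AM--GM route work you would need to weight by $|u|^3$ rather than $u^2$, i.e. $|vt|\leq\tfrac12(v^2/|u|^3+|u|^3t^2)$, which gives $|g'(y)|\geq|u|(1-\tfrac{v^2}{2u^4})+\tfrac{|u|^3t^2}{2}>\tfrac12|u|$. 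Second, the ``exact'' constant you guess ($\tfrac23|\log(\rho)'(x)|$) is wrong. The paper factors $|u|$ out and uses $|g'(y)|>|u|\bigl(1-|uy|+|uy|^2\bigr)$, then invokes $\inf_{z\in\RR}(1-|z|+z^2)=\tfrac34$; this gives $|g'(y)|>\tfrac34|u|>\tfrac{3}{4c_A}$, and plugging $c_p=\tfrac{3}{4c_A}$ and $\sigma=l/\sqrt{d}$ into the generic bound $\|q_{p(N)}\|_\infty\leq(c_p\sigma\sqrt{2\pi})^{-1}$ (Proposition~\ref{prop:PolynomialPDFbound}) gives exactly $4c_A\sqrt{d}/(3l\sqrt{2\pi})$. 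Note also that the sign worry you raise is handled automatically by the triangle inequality once you group the terms as $p'(y)=\log(\rho)''(x)y+\log(\rho)'(x)(1+(\log(\rho)'(x))^2y^2)$, since the bracket is always positive; no case split on $\text{sign}(\log(\rho)'(x))$ is needed.
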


\begin{proof} 
Existence of $q_x$ follows from~\eqref{eq:Mdef} and Proposition~\ref{prop:p(N)pdf}.
By Proposition~\ref{prop:auxAndef} we have
$\left|\log(\rho)''(x)\right|<\left(\log(\rho)'(x)\right)^2$ and
$c_A>\left|\log(\rho)'(x)\right|>1/c_A$. Consider the 
polynomial
$y\mapsto p(y):=\log(\rho)'(x)y+\log(\rho)''(x)y^2/2+\left(\log(\rho)'(x)\right)^3y^3/3$.
By~\eqref{eq:Mdef} it holds $p(Y-x)=K(x,Y)$.
Since 
$p'(y)=\log(\rho)''(x)y+\log(\rho)'(x)(1+\log(\rho)'(x)^2y^2)$,
we have
\begin{eqnarray}
|p'(y)|
&\geq&|\log(\rho)'(x)|(1+\log(\rho)'(x)^2y^2) -\left|\log(\rho)''(x)\right||y|\nonumber\\
&>&|\log(\rho)'(x)|(1-|\log(\rho)'(x)y|+|\log(\rho)'(x)y|^2)> \frac{3}{4c_A}\text{,}\nonumber
\end{eqnarray}
where 
the second inequality holds since 
$\left|\log(\rho)''(x)\right|<\left(\log(\rho)'(x)\right)^2$
and the third follows from  $\inf_{z\in\RR}\{1-|z|+z^2\}=3/4$
and $\left|\log(\rho)'(x)\right|>1/c_A$.
The lemma now follows by Proposition~\ref{prop:PolynomialPDFbound}.  
\end{proof}

Recall that the proposal is normal $\mathbf{Y}^d=(Y_1^d,\ldots,Y_d^d)\sim N(\mathbf{x}^d,l^2/d\cdot I_d)$.

\begin{lem}\label{lemma:sumMpdf}
For any $\mathbf{x}^d\in \mathcal{A}_d$, the sum 
$\sum_{k=2}^dK(x_i^d,Y_i^d)$ possesses a density $\mathbf{q}^d_{\mathbf{x}^d}$. 
Moreover, there exists a constant $C_{K}$ such that $\|\mathbf{q}^d_{\mathbf{x}^d}\|_\infty\leq
C_{K}$ holds for all $d\in\NN$ and all $\mathbf{x}^d\in \mathcal{A}_d$.
\end{lem}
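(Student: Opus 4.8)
The plan exploits two features of the setting: the summands $K(x^d_i,Y^d_i)$, $i=2,\dots,d$, are independent, and condition~\eqref{eq:AnCond2} in the definition of $\mathcal{A}_d$ forces a fixed positive fraction of the indices $i\in\{2,\dots,d\}$ to lie in the set $A$ on which Lemma~\ref{lemma:Mpdf} applies. Concretely, I would put $I:=\{i\in\{2,\dots,d\}\colon x^d_i\in A\}$ and $m:=|I|$; by~\eqref{eq:AnCond2} together with Proposition~\ref{prop:auxAndef} (which gives $\rho(A)>0$) we have $m>\rho(A)(d-1)/2>0$, so $I\neq\emptyset$. Writing $I=\{i_1,\dots,i_m\}$ and splitting
\[
\sum_{i=2}^dK(x^d_i,Y^d_i)=\sum_{k=1}^mK(x^d_{i_k},Y^d_{i_k})+\sum_{i\notin I}K(x^d_i,Y^d_i)=:S_I+S_{I^c}
\]
exhibits the sum as $S_I+S_{I^c}$ with $S_I$ and $S_{I^c}$ independent. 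By Lemma~\ref{lemma:Mpdf} each $K(x^d_{i_k},Y^d_{i_k})$ has a density $q_{x^d_{i_k}}$ with $\|q_{x^d_{i_k}}\|_\infty\le c\sqrt d$ for $c:=4c_A/(3l\sqrt{2\pi})$; hence $S_I$ has the $m$-fold convolution $g:=q_{x^d_{i_1}}*\cdots*q_{x^d_{i_m}}$ as its density, and therefore $\sum_{i=2}^dK(x^d_i,Y^d_i)$ has the density $\mathbf{q}^d_{\mathbf{x}^d}=g*\nu$, where $\nu:=\mathrm{Law}(S_{I^c})$. This settles existence.

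For the uniform bound, note first that $\|\mathbf{q}^d_{\mathbf{x}^d}\|_\infty=\|g*\nu\|_\infty\le\|g\|_\infty$ since $\nu$ is a probability measure, so it suffices to bound $\|g\|_\infty$ independently of $d$ and $\mathbf{x}^d$. Each factor of $g$ is a probability density, so $\|q_{x^d_{i_k}}\|_1=1$; interpolating between $L^1$ and $L^\infty$ and using Lemma~\ref{lemma:Mpdf} gives $\|q_{x^d_{i_k}}\|_p\le(c\sqrt d)^{1/p'}$ for every $p\in[1,\infty]$. I would then apply the optimal form of Young's inequality (Section~\ref{sec:PDFBOunds}) $m-1$ times to $g$, taking all exponents equal to $p:=m/(m-1)$; this choice makes the successive $L^r$-exponents telescope so that the last convolution lands in $L^\infty$, and the accumulated sharp constant equals $C_p^m=(m/(m-1))^{(m-1)/2}/\sqrt m\le\sqrt{e/m}$. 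Since $\prod_{k=1}^m\|q_{x^d_{i_k}}\|_p\le(c\sqrt d)^{m/p'}=c\sqrt d$ (using $p'=m$), this yields $\|g\|_\infty\le c\sqrt{e\,d/m}$, and because $m>\rho(A)(d-1)/2\ge\rho(A)d/4$ for $d\ge2$ we get $\|g\|_\infty\le 2c\sqrt{e/\rho(A)}$; the finitely many $d$ for which $m\le1$ are covered by the crude bound $\|g\|_\infty\le c\sqrt d$. Taking $C_K$ to be the maximum of these constants completes the argument.

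The one genuinely delicate point is the behaviour of the Young constant under iteration: the ordinary Young constant on $\RR$ is $\ge1$, so applying it $m-1$ times (and $m-1$ is of order $d$) would only compound, leaving the factor $c\sqrt d$ coming from the $l^2/d$-scaling of the proposal in Lemma~\ref{lemma:Mpdf} uncontrolled. It is exactly the \emph{optimal} constant --- which satisfies $C_p<1$ for $p<2$ and produces the gain $C_p^m\le\sqrt{e/m}$ --- that absorbs the $\sqrt d$ blow-up, and it can do so only because $m$ grows linearly in $d$; ensuring this linear growth is precisely the role of condition~\eqref{eq:AnCond2}, which keeps a fixed fraction of the coordinates inside the set $A$ from Proposition~\ref{prop:auxAndef}.
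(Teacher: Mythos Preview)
Your proof is correct and follows essentially the same route as the paper: split off the indices in $A$, use Lemma~\ref{lemma:Mpdf} on those, and invoke the optimal Young inequality to turn the $\sqrt d$ blow-up into a harmless constant via the $\sqrt{e/m}$ gain. The paper packages the iterated sharp Young step as a stand-alone result (Proposition~\ref{prop:SumPolynomialPDFboun}) and simply cites it, whereas you carry out that computation inline; your treatment of $S_{I^c}$ as an arbitrary probability measure (rather than asserting it has a density) is in fact slightly cleaner than the paper's, which writes $q_{\RR\setminus A}$ as a density without comment.
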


\begin{proof}
Fix $\mathbf{x}^d\in\mathcal{A}_d$ and, for each $i$, let $q_i$ denote the density of
$K(x^d_i,Y^d_i)$ as in the previous lemma. 
Since the components of 
$\mathbf{Y}^d$
are IID, 
we have $\mathbf{q}^d_{\mathbf{x}^d}=\Asterisk_{i=2}^d q_i=q_A \Asterisk q_{\RR\setminus A}$,
where
$q_A:= \Asterisk_{x^d_i\in A}q_i$
and
$q_{\RR\setminus A}:=\Asterisk_{x^d_i\notin A}q_i$.
By the definition of convolution and the fact that 
$q_{\RR\setminus A}$ is a density, 
it follows that 
$\|\mathbf{q}^d_{\mathbf{x}^d}\|_\infty\leq \|q_A\|_\infty\|q_{\RR\setminus A}\|_1=\|q_A\|_\infty$.
By Lemma~\ref{lemma:Mpdf} 
there exists $C>0$ such that,   
for any $d\in\NN$,
it holds 
$\|q_i\|_\infty<C\sqrt{d}$ if $x^d_i\in A$.
Condition~\eqref{eq:AnCond2} implies there are at least 
$(d-1)\rho(A)/2$ 
factors in the convolution $q_A=\Asterisk_{x^d_i\in A}q_i$. Hence
Proposition~\ref{prop:SumPolynomialPDFboun} applied to $q_A$ yields
$\|\mathbf{q}^d_{\mathbf{x}^d}\|_\infty\leq\|q_A\|_\infty\leq c  \frac{C\sqrt{d}}{\sqrt{(d-1)\rho(A)/2}}$.
This concludes the proof of the lemma. 
\end{proof}

\begin{lem}\label{lemma:betaProperties}
Let $\mathbf{x}^d\in \mathcal{A}_d$. The function $y\mapsto\beta(\mathbf{x}^d,y)$, defined in~\eqref{eq:betadef}, 
is in $\mathcal{C}^2(\RR)$ and the following holds:  
\begin{enumerate}[(i)]
\item $0<\beta(\mathbf{x}^d,y)\leq1$ for all $y\in\RR$;
\item $\beta(\mathbf{x}^d,x^d_1)=\mathbb{E}_{\mathbf{Y}^d-}\left[1\wedge e^{\sum_{i=2}^dK(x^d_i,Y^d_i)}\right]$;
\item $\left|\frac{\partial}{\partial y}\beta(\mathbf{x}^d,y)\right|\leq \left|\log(\rho)'(x^d_1)\right|$ for all $y\in\RR$;
\item $\frac{\partial}{\partial y}\beta(\mathbf{x}^d,x^d_1)=\log(\rho)'(x^d_1)\mathbb{E}_{\mathbf{Y}^d-}\left[e^{\sum_{i=2}^dK(x^d_i,Y^d_i)}1_{\left\{\sum_{i=2}^dK(x^d_i,Y^d_i)<0\right\}}\right]$;
\item $\left|\frac{\partial^2}{\partial y^2}\beta(\mathbf{x}^d,y)\right|\leq \left|\log(\rho)'(x^d_1)\right|^2(C_K+1)$ for all $y\in\RR$ and constant $C_K$ from Lemma~\ref{lemma:sumMpdf}.
\end{enumerate}
\end{lem}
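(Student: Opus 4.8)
The plan is to reduce the whole statement to a one-variable computation. Write $a:=\log(\rho)'(x^d_1)$ and $S:=\sum_{i=2}^dK(x^d_i,Y^d_i)$; by Lemma~\ref{lemma:sumMpdf} (and its proof) the random variable $S$ has a bounded continuous density $\mathbf{q}:=\mathbf{q}^d_{\mathbf{x}^d}$ with $\|\mathbf{q}\|_\infty\le C_K$. Since $y$ enters the definition~\eqref{eq:betadef} of $\beta$ only through the term $a(y-x^d_1)$, we have $\beta(\mathbf{x}^d,y)=g\!\left(a(y-x^d_1)\right)$, where $g(t):=\mathbb{E}_{\mathbf{Y}^d-}\!\left[1\wedge e^{t+S}\right]$, so it suffices to establish the corresponding properties for $g$ and then compose with the affine map $y\mapsto a(y-x^d_1)$. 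Item (i) is then immediate, since $1\wedge e^{t+S}\in(0,1]$ almost surely, so its expectation lies in $(0,1]$; and $g(0)=\mathbb{E}_{\mathbf{Y}^d-}[1\wedge e^{S}]$ is precisely the right-hand side of (ii).

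Next I would split the expectation according to the sign of $t+S$, using the density $\mathbf{q}$:
$$g(t)=\int_{-\infty}^{-t}e^{t+s}\mathbf{q}(s)\,ds+\int_{-t}^{\infty}\mathbf{q}(s)\,ds.$$
Differentiating by the Leibniz rule (the interchange of derivative and integral is routine, $\mathbf{q}$ being bounded and $e^{t+s}\le 1$ on the range of integration), the two boundary terms cancel and one obtains
$$g'(t)=\int_{-\infty}^{-t}e^{t+s}\mathbf{q}(s)\,ds=\mathbb{E}_{\mathbf{Y}^d-}\!\left[e^{t+S}\mathbf{1}\{t+S<0\}\right],\qquad g''(t)=g'(t)-\mathbf{q}(-t).$$
Since $\partial_y\beta(\mathbf{x}^d,y)=a\,g'(a(y-x^d_1))$ and $\partial^2_y\beta(\mathbf{x}^d,y)=a^2 g''(a(y-x^d_1))$, evaluating at $y=x^d_1$ yields (iv); the bounds $0\le g'(t)\le 1$ (the integrand of $g'$ is nonnegative, and $e^{t+s}<1$ for $s<-t$, so $g'(t)\le\mathbb{P}(S<-t)\le 1$) give (iii); and $|g''(t)|\le g'(t)+\|\mathbf{q}\|_\infty\le 1+C_K$ gives (v). Finally $g'$ is continuous by dominated convergence in the integral above, and $g''=g'-\mathbf{q}(-\cdot)$ is continuous since $\mathbf{q}$ is, so $g\in\mathcal{C}^2(\RR)$ and hence $\beta(\mathbf{x}^d,\cdot)\in\mathcal{C}^2(\RR)$. (If $a=0$ everything is trivial: $\beta(\mathbf{x}^d,\cdot)$ is constant and both sides of (iii)--(v) vanish.)

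The only genuinely delicate point is the continuity of the density $\mathbf{q}^d_{\mathbf{x}^d}$ — this is what makes $g''$ (hence $\partial^2_y\beta$) continuous rather than merely bounded, i.e.\ what upgrades the conclusion from $\mathcal{C}^1$-with-Lipschitz-derivative to $\mathcal{C}^2$. I would obtain it from the convolution structure displayed in the proof of Lemma~\ref{lemma:sumMpdf}: writing $\mathbf{q}^d_{\mathbf{x}^d}=q_A\Asterisk q_{\RR\setminus A}$, each factor $q_i$ with $x^d_i\in A$ is, by~\eqref{eq:Mdef} and Proposition~\ref{prop:auxAndef}, the density of a strictly monotone cubic transformation of a Gaussian (the relevant polynomial has nonvanishing derivative, exactly as in the proof of Lemma~\ref{lemma:Mpdf}), hence smooth and bounded; therefore $q_A$ is bounded and continuous, and its convolution with the probability density $q_{\RR\setminus A}$ is continuous by dominated convergence. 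Everything else is routine differentiation and bookkeeping.
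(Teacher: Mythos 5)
Your proof is correct and takes a genuinely different route from the paper's. The paper differentiates the defining expectation by dominated convergence on difference quotients of the Lipschitz map $x\mapsto 1\wedge e^x$, then rewrites the resulting $\partial_y\beta$ via the identity $e^x\mathbf{1}_{\{x<0\}}=1\wedge e^x-\mathbf{1}_{\{x\geq 0\}}$ in terms of $\beta$ itself and the distribution function $\Phi_K^d$, and finally differentiates $\Phi_K^d$ through the existence of the density from Lemma~\ref{lemma:sumMpdf}. You instead reduce everything to the single-variable function $g(t)=\mathbb{E}_{\mathbf{Y}^d-}[1\wedge e^{t+S}]$ via the affine substitution $y\mapsto a(y-x^d_1)$, write $g$ directly as an explicit integral against the density $\mathbf{q}$ split at $s=-t$, and differentiate by Leibniz; the formulas and the bounds that drop out are the same. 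Your route has the advantage of making explicit a point the paper leaves implicit: the conclusion $\beta(\mathbf{x}^d,\cdot)\in\mathcal{C}^2(\RR)$ genuinely requires continuity of $\mathbf{q}^d_{\mathbf{x}^d}$ (the paper's formula for $\partial_y^2\beta$ contains $\mathbf{q}^d_{\mathbf{x}^d}$ as a pointwise summand but never argues its continuity), and your convolution argument--smooth factor from Lemma~\ref{lemma:Mpdf}'s strictly monotone cubic times an integrable remainder--supplies that. One small reorganization would tighten the writeup: you call the Leibniz interchange ``routine'' before establishing continuity of $\mathbf{q}$, but the boundary terms in Leibniz already evaluate $\mathbf{q}$ pointwise, so the continuity claim should be stated up front rather than as an afterthought. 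Apart from that ordering, the argument is complete and arguably cleaner than the paper's.
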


\begin{proof} (i) and (ii) follow from the definition in~\eqref{eq:betadef}. 
Since $x\mapsto 1\wedge e^x$ is Lipschitz (with Lipschitz constant $1$) on $\RR$, 
the family of functions $\{x\mapsto (1\wedge e^{x+h}- 1\wedge e^x)/h; h\in\RR\setminus\{0\}\}$
is bounded by one and converges pointwise to $1_{\{x<0\}}e^x$ for all $x\in\RR\setminus\{0\}$,
as $h\to0$.
Hence the DCT implies that 
$\frac{\partial}{\partial y}\beta(\mathbf{x}^d,y)$ 
exists and can be expressed as 
\begin{eqnarray}
\label{eq:beta_derivative}
&\log(\rho)'(x^d_1)\mathbb{E}_{\mathbf{Y}^d-}\left[e^{\log(\rho)'(x^d_1)(y-x^d_1)
+\sum_{i=2}^dK(x^d_i,Y^d_i)}1_{\left\{\log(\rho)'(x^d_1)(y-x^d_1)+\sum_{i=2}^dK(x^d_i,Y^d_i)<0\right\}}\right],&
\end{eqnarray}
implying (iii) and (iv). 
Let $\Phi^d_{K}$ denote the distribution of $\sum_{i=2}^dK(x^d_i,Y^d_i)$
and recall that by definition we have $e^x 1_{\{x<0\}} = 1\wedge e^x - 1_{\{x\geq 0\}}$ for all $x\in\RR$.
Hence, by~\eqref{eq:beta_derivative}, it follows 
\begin{eqnarray}
\frac{\partial}{\partial y}\beta(\mathbf{x}^d,y) & = &
\log(\rho)'(x^d_1)\left(\beta(\mathbf{x}^d,y)-1+\Phi^d_{K}\left(-\log(\rho)'(x^d_1)(y-x^d_1)\right)\right)\text{,}
\label{eq:beta_derivative_1}
\end{eqnarray}
By Lemma~\ref{lemma:sumMpdf}, $\Phi_K^d$ is differentiable. Hence, by~\eqref{eq:beta_derivative_1}, 
$\frac{\partial^2}{\partial y^2}\beta(\mathbf{x}^d,y)$ also exists and takes the form: 
$$\left(\log(\rho)'(x^d_1)\right)^2\left(\beta(\mathbf{x}^d,y)-1
+\Phi^d_{K}\left(-\log(\rho)'(x^d_1)(y-x^d_1)\right)-\mathbf{q}^d_{\mathbf{x}^d}\left(-\log(\rho)'(x^d_1)(y-x^d_1)\right)\right)\text{.}$$
Part (v) follows from this representation of $\frac{\partial^2}{\partial y^2}\beta(\mathbf{x}^d,y)$
and Lemma~\ref{lemma:sumMpdf}.
\end{proof}

\begin{proof}[Proof of Proposition~\ref{prop:TildaGHatG}]
Fix an arbitrary $\mathbf{x}^d\in\mathcal{A}_d$. 
Let 
$Z_1,W_1$ be random variables, as in
Proposition~\ref{prop:TaylorEstimate},
that satisfy
\begin{eqnarray*}
f(Y^d_1)-f(x^d_1) & = & f'(x^d_1)(Y^d_1-x^d_1)+\frac{f''(x^d_1)}{2}(Y^d_1-x^d_1)^2+\frac{f'''(Z_1)}{6}(Y^d_1-x^d_1)^3, \\
\beta(\mathbf{x}^d,Y^d_1) & = & \beta(\mathbf{x}^d,x^d_1)+\frac{\partial}{\partial
y}\beta(\mathbf{x}^d,x^d_1)(Y^d_1-x^d_1)+\frac{\frac{\partial^2}{\partial
y^2}\beta(\mathbf{x}^d,W_1)}{2}(Y^d_1-x^d_1)^2.
\end{eqnarray*}
Then, by the definition of 
$\tilde{\mathcal{G}}_df(\mathbf{x}^d)$
in~\eqref{eq:GtildaGenerator} and the fact $Y_1^d-x_1^d\sim N(0,l^2/d)$, we find 
\begin{eqnarray}
\tilde{\mathcal{G}}_df(\mathbf{x}^d)&=&\frac{l^2f''(x^d_1)}{2}\beta(\mathbf{x}^d,x^d_1)+l^2f'(x^d_j)\frac{\partial}{\partial y}\beta(\mathbf{x}^d,x^d_1)\nonumber\\
&+&d\mathbb{E}_{Y^d_1}\left[\left(\beta(\mathbf{x}^d,x^d_1)\frac{f'''(Z_1)}{6}+f'(x^d_1)\frac{\frac{\partial^2}{\partial y^2}\beta(\mathbf{x}^d,W_1)}{2}\right)(Y^d_1-x^d_1)^3\right]\nonumber\\
&+&d\mathbb{E}_{Y^d_1}\left[\left(\frac{f''(x^d_1)}{2}\frac{\frac{\partial^2}{\partial y^2}\beta(\mathbf{x}^d,W_1)}{2}+\frac{f'''(Z_1)}{6}\frac{\partial}{\partial y}\beta(\mathbf{x}^d,x^d_1)\right)(Y^d_1-x^d_1)^4\right]\nonumber\\
&+&d\mathbb{E}_{Y^d_1}\left[\frac{f'''(Z_1)}{6}\frac{\frac{\partial^2}{\partial y^2}\beta(\mathbf{x}^d,W_1)}{2}(Y^d_1-x^d_1)^5\right]\text{.}\nonumber
\end{eqnarray}
By parts (ii) and (iv) in Lemma~\ref{lemma:betaProperties}
and the definition of $\hat{\mathcal{G}}_df(\mathbf{x}^d)$ in~\eqref{eq:GhatGenerator}
we have
$\hat{\mathcal{G}}_df(\mathbf{x}^d)=
\frac{l^2f''(x^d_1)}{2}\beta(\mathbf{x}^d,x^d_1)+l^2f'(x^d_j)\frac{\partial}{\partial y}\beta(\mathbf{x}^d,x^d_1)$. 
The three expectations in the display above can each be bounded
by a constant times $\left(\sum_{i=1}^3\|f^{(i)}\|_{\infty,1/2}\right)e^{|x^d_1|}d^{-1/2}$ using
Proposition~\ref{prop:TaylorEstimate}~and Lemma~\ref{lemma:betaProperties}.
For instance, the first expectation can be bounded above using 
(v) in Lemma~\ref{lemma:betaProperties}: 
$$\frac{d}{6}\mathbb{E}_{Y^d_1}\left[|f'''(Z_1)||Y^d_1-x^d_1|^3\right]+\frac{d(C_K+1)}{2}|f'(x^d_1)||\log(\rho'(x^d_1))|^2\mathbb{E}_{Y^d_1}\left[|Y^d_1-x^d_1|^3\right]\text{.}$$
Proposition~\ref{prop:TaylorEstimate} yields 
$\frac{d}{6}\mathbb{E}_{Y^d_1}\left[|f'''(Z_1)||Y^d_1-x^d_1|^3\right]
\leq C_0 e^{|x^d_1|} \|f'''\|_{\infty,1} d^{-1/2}\leq C_0 e^{|x^d_1|} \|f'''\|_{\infty,1/2} d^{-1/2}$ for some $C_0>0$. 
Moreover, $|f'(x^d_1)||\log(\rho'(x^d_1))|^2\leq \|f'\|_{\infty,1/2}\|(\log(\rho)')^2\|_{\infty,1/2}e^{|x^d_1|}$
as $\log(\rho)\in\mathcal{S}^4$ and $f\in\mathcal{S}^3$.
Hence
$\frac{d(C_K+1)}{2}|f'(x^d_1)||\log(\rho'(x^d_1))|^2\mathbb{E}_{Y^d_1}\left[|Y^d_1-x^d_1|^3\right]
\leq C_1 e^{|x^d_1|} \|f'\|_{\infty,1/2} d^{-1/2}$
for some $C_1>0$. 
Similarly, it follows that the second and third expectations above decay as $d^{-1}$ and $d^{-3/2}$, respectively.
This concludes the proof of the proposition. 
\end{proof}

\begin{lem}\label{lemma:MNdifference}
Recall that $\mathcal{N}(\mathbf{x}^d,\mathbf{Y}^d)$ and $\sum_{i=2}^dK(x^d_i,Y^d_i)$
are defined in~\eqref{eq:Ndef} and \eqref{eq:Mdef}, respectively. Then there
exists a constant $C$ such that for all $d\in\NN$ we have:
$$\mathbb{E}_{\mathbf{Y}^d-}\left[\left|\sum_{i=2}^dK(x^d_i,Y^d_i)-\mathcal{N}(\mathbf{x}^d,\mathbf{Y}^d)\right|\right]\leq
Cd^{-1/2}\quad \forall\mathbf{x}^d\in\mathcal{A}_d\text{.}$$ 
\end{lem}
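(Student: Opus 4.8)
The plan is to reduce the statement to an elementary second-moment computation. Fix $\mathbf{x}^d\in\mathcal{A}_d$ and abbreviate $G_i:=Y^d_i-x^d_i$, so that under $\mathbb{E}_{\mathbf{Y}^d-}$ the variables $G_2,\dots,G_d$ are IID $N(0,l^2/d)$. Comparing the definitions~\eqref{eq:Mdef} and~\eqref{eq:Ndef}, the linear contributions $\log(\rho)'(x^d_i)G_i$ cancel, and the deterministic drift of $\mathcal{N}$ is exactly the mean of the quadratic part of $\sum_i K$, so that
\[
\sum_{i=2}^dK(x^d_i,Y^d_i)-\mathcal{N}(\mathbf{x}^d,\mathbf{Y}^d)=\sum_{i=2}^dD_i,\qquad D_i:=\frac{\log(\rho)''(x^d_i)}{2}\Bigl(G_i^2-\frac{l^2}{d}\Bigr)+\frac{(\log(\rho)'(x^d_i))^3\,1_A(x^d_i)}{3}\,G_i^3.
\]
Each $D_i$ is centred, since $\mathbb{E}[G_i^2]=l^2/d$ and all odd moments of $G_i$ vanish.

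Next I would pass from the $L^1$ to the $L^2$ norm via Jensen's inequality, bounding $\mathbb{E}_{\mathbf{Y}^d-}[|\sum_{i=2}^dD_i|]$ by $(\mathbb{E}_{\mathbf{Y}^d-}[(\sum_{i=2}^dD_i)^2])^{1/2}$. Since the $D_i$ are independent and centred, the second moment of the sum equals $\sum_{i=2}^d\mathbb{E}[D_i^2]$. Expanding $D_i^2$ and using the Gaussian moments $\mathbb{E}[(G_i^2-l^2/d)^2]=2l^4/d^2$ and $\mathbb{E}[G_i^6]=15l^6/d^3$, together with the vanishing of the cross term $\mathbb{E}[(G_i^2-l^2/d)G_i^3]=\mathbb{E}[G_i^5]-(l^2/d)\mathbb{E}[G_i^3]=0$, gives
\[
\mathbb{E}[D_i^2]=\frac{(\log(\rho)''(x^d_i))^2}{4}\cdot\frac{2l^4}{d^2}+\frac{(\log(\rho)'(x^d_i))^6\,1_A(x^d_i)}{9}\cdot\frac{15l^6}{d^3}.
\]
Summing over $i$ and taking square roots then finishes the proof, provided the sum is $\O(1/d)$ uniformly over $\mathbf{x}^d\in\mathcal{A}_d$.

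It remains to control this sum. For the second term, Proposition~\ref{prop:auxAndef} supplies $|\log(\rho)'(x)|<c_A$ on $A$, so $(\log(\rho)'(x^d_i))^6\,1_A(x^d_i)\le c_A^6$ and this part contributes at most $(d-1)\cdot\frac{15 l^6 c_A^6}{9d^3}=\O(d^{-2})$. For the first term, the assumption $\log(\rho)\in\mathcal{S}^4$ gives $(\log(\rho)'')^2\in\mathcal{S}^0$, whence $(\log(\rho)''(x))^2\le\|(\log(\rho)'')^2\|_{\infty,1}\,e^{|x|}$ for every $x\in\RR$; plugging this into condition~\eqref{eq:AnCond1} yields $\sum_{i=2}^d(\log(\rho)''(x^d_i))^2\le 2(d-1)\|(\log(\rho)'')^2\|_{\infty,1}\int_\RR e^{|x|}\rho(x)\,dx=\O(d)$, so the first part contributes $\O(d)\cdot\O(d^{-2})=\O(d^{-1})$. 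Hence $\sum_{i=2}^d\mathbb{E}[D_i^2]\le C'/d$ for a constant $C'$ depending only on $l$, $c_A$ and $\rho$ (and not on $d$ or on $\mathbf{x}^d\in\mathcal{A}_d$), and the lemma follows with $C=\sqrt{C'}$. There is no serious obstacle here; the points to get right are that the relevant bound on $\sum_i(\log(\rho)''(x^d_i))^2$ comes from the growth condition $\log(\rho)\in\mathcal{S}^4$ combined with~\eqref{eq:AnCond1} (not from~\eqref{eq:AnCond3}--\eqref{eq:AnCond4}), and that the odd-moment cross term drops out, which is what makes the second moment of order $1/d$ rather than $1/\sqrt d$.
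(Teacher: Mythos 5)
Your proof is correct and follows essentially the same route as the paper: the paper also reduces the $L^1$ bound to a second-moment estimate of the centred, independent summands (there via Proposition~\ref{prop:BoundingSums} applied separately to the two pieces after a triangle-inequality split into $T^d_3$ and $T^d_4$), and also obtains the crucial uniform control of $\sum_{i\ge 2}(\log\rho)''(x_i^d)^2$ from $\log\rho\in\mathcal{S}^4$ together with~\eqref{eq:AnCond1}, and the boundedness of $\log(\rho)'$ on $A$ from Proposition~\ref{prop:auxAndef}. One small remark: your final comment that the vanishing of $\mathbb{E}[(G_i^2-l^2/d)G_i^3]$ is ``what makes the second moment of order $1/d$'' is not quite right — even if that within-$D_i^2$ cross term did not vanish, Cauchy--Schwarz would bound its contribution by $O(d^{-3/2})$, which is dominated by the $O(d^{-1})$ diagonal terms; what is actually essential is that $\mathbb{E}[D_i]=0$ (kills the off-diagonal terms in $\mathbb{E}[(\sum_i D_i)^2]$), and that observation you do make correctly.
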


\begin{proof}
The difference in question is smaller than the sum of the following two terms:
\begin{eqnarray}
T^d_3(\mathbf{x}^d)&=&\mathbb{E}_{\mathbf{Y}^d-}\left[\left|\sum_{i=2}^d\frac{\log(\rho)''(x^d_i)}{2}\left((Y^d_i-x^d_i)^2-\frac{l^2}{d}\right)\right|\right]\text{,}\nonumber\\
T^d_4(\mathbf{x}^d)&=&\mathbb{E}_{\mathbf{Y}^d-}\left[\left|\sum_{i=2}^d\frac{\left(\log(\rho)'(x^d_i)\right)^31_{A}(x^d_i)}{3}(Y^d_i-x^d_i)^3\right|\right]\text{.}\nonumber
\end{eqnarray}
Note that $X_i:=(Y^d_i-x^d_i)^2-l^2/d$, $2\leq i\leq d$, are zero mean IID with $\mathbb{E}[X_i^4] = 2l^4/d^2$.
Hence, as $\log(\rho)\in\mathcal{S}^4$, we may apply
Proposition~\ref{prop:BoundingSums} with the function $x\mapsto \log(\rho)''(x)$ 
and $X_i$,$2\leq i\leq d$,  
to get 
$T^d_3(\mathbf{x}^d)\leq  \|\log(\rho)''\|_{\infty,1/2}\left(2(l^4/d^2)\sum_{i=2}^d e^{|x^d_i|}\right)^{1/2}\leq C_0 d^{-1/2}$ for some
constant
$C_0>0$,
where the second inequality follows from~\eqref{eq:AnCond1}. 
Similarly,
Proposition~\ref{prop:BoundingSums} and assumption~\eqref{eq:AnCond1}, applied to 
the function $x\mapsto (\log(\rho)'(x))^31_{A}(x)$ and random variables
$Y^d_i-x^d_i$, yield $T^d_4(\mathbf{x}^d)\leq C_1 \left(\sum_{i=2}^d
e^{|x^d_i|}/d^3\right)^{1/2}\leq C_2 d^{-1}$ for some constants $C_1,C_2>0$ and all $d\in\NN$. 
\end{proof}

\begin{lem}\label{lemma:Acharacteristic}
There exist constants $c_1,c'_1>0$, such that
for any $d\in\NN$, $i\in\{2,\cdots,d\}$, $\mathbf{x}^d\in\mathcal{A}_d$ and  $x^d_i\notin A$,
it holds 
$$\left|\log\varphi_i(t)-\left(i\frac{l^2}{2d}\log(\rho)''(x^d_i)t-\frac{l^2}{2d}\left(\log(\rho)'(x^d_i)\right)^2t^2\right)\right|\leq
\frac{c_1}{d^{3/2}}(t^2+|t|^3)\text{,}\quad |t|\leq c'_1\sqrt{d},$$
where
$\varphi_i(t):=\mathbb{E}_{Y_i^d}[\exp(itK(x^d_i,Y^d_i))]$, $t\in\RR$, is the CF of $K(x^d_i,Y^d_i)$
(cf.~\eqref{eq:Mdef}).
\end{lem}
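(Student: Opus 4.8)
The plan is to exploit that the indicator $1_A(x^d_i)$ vanishes when $x^d_i\notin A$, so the cubic term drops out of the definition~\eqref{eq:Mdef} of $K$ and $K(x^d_i,Y^d_i)$ becomes a quadratic polynomial in a single Gaussian, whose CF can be computed in closed form. Writing $Z:=Y^d_i-x^d_i\sim N(0,l^2/d)$, $a:=\log(\rho)'(x^d_i)$, $b:=\log(\rho)''(x^d_i)$, we have $K(x^d_i,Y^d_i)=aZ+\tfrac{b}{2}Z^2$. First I would complete the square in the Gaussian integral defining $\varphi_i(t)=\mathbb{E}[e^{it(aZ+bZ^2/2)}]$; this integral is absolutely convergent because the coefficient of $-z^2/2$ in the exponent, namely $d/l^2-itb$, has real part $d/l^2>0$. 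With $u:=(l^2/d)b$ and $v:=(l^2/d)a^2$ one obtains
\[
\varphi_i(t)=(1-itu)^{-1/2}\exp\!\left(\frac{-t^2v}{2(1-itu)}\right),\qquad t\in\RR,
\]
with principal branches. Since $t\mapsto 1-itu$ stays on the line $\{\operatorname{Re}=1\}$, $\varphi_i$ is nowhere zero and its continuous logarithm normalised by $\log\varphi_i(0)=0$ equals $-\tfrac12\log(1-itu)-t^2v/(2(1-itu))$, the logarithm being principal.

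Next I would Taylor-expand each of the two summands to the order of the claimed approximation. Using that for $|w|\le\tfrac12$ one has $-\tfrac12\log(1-w)=\tfrac{w}{2}+\tfrac{w^2}{4}+r_1(w)$ with $|r_1(w)|\le|w|^3$, and $(1-w)^{-1}=1+w+r_2(w)$ with $|r_2(w)|\le 2|w|^2$, substituting $w=itu$ (legitimate once $|tu|\le\tfrac12$) and observing that the approximation asserted in the lemma is precisely $\tfrac{i}{2}ut-\tfrac12 vt^2$, we get
\[
\log\varphi_i(t)-\left(\tfrac{i}{2}ut-\tfrac12 vt^2\right)=-\tfrac{t^2u^2}{4}+r_1(itu)-\tfrac{it^3uv}{2}-\tfrac{t^2v}{2}\,r_2(itu).
\]
It then suffices to bound the four terms on the right by a constant times $d^{-3/2}(t^2+|t|^3)$ on the range $|t|\le c'_1\sqrt d$.

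To do this I would combine two facts. First, $\mathbf{x}^d\in\mathcal{A}_d$ and~\eqref{eq:AnCond1} give $e^{|x^d_i|}\le\sum_{j=2}^d e^{|x^d_j|}\le 2(d-1)\int_\RR e^{|x|}\rho(x)\,dx$, so $|x^d_i|=O(\log d)$. Second, $\log(\rho)\in\mathcal{S}^4$ yields $|a|\le\|\log(\rho)'\|_{\infty,1/6}\,e^{|x^d_i|/6}$ and $|b|\le\|\log(\rho)''\|_{\infty,1/6}\,e^{|x^d_i|/6}$, hence $|a|,|b|\le C_0 d^{1/6}$ for a $C_0$ depending only on $\rho$. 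Therefore $|u|\le C_1 d^{-5/6}$ and $|v|\le C_1 d^{-2/3}$ for a $C_1$ depending only on $l$ and $\rho$, so each of $u^2$, $|u|^3$, $|uv|$ and $|v|\,|u|$ is bounded by a constant times $d^{-3/2}$. Choosing $c'_1>0$ small enough that $|tu|\le c'_1 C_1 d^{-1/3}\le\tfrac12$ for all $d\in\NN$ whenever $|t|\le c'_1\sqrt d$, the remainders $r_1,r_2$ are controlled as above, and on that range $|tu|^2\le|tu|$, so $t^2|v|\,|r_2(itu)|\le 2t^2|v|\,|tu|^2\le 2|t|^3|v|\,|u|$, which keeps the $|t|$-power at $3$ rather than producing a spurious $t^4$. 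Adding the four bounds gives $|\log\varphi_i(t)-(\cdots)|\le c_1 d^{-3/2}(t^2+|t|^3)$ with $c_1,c'_1$ depending only on $l$ and $\rho$, uniformly in $d$, in $i\in\{2,\dots,d\}$, in $\mathbf{x}^d\in\mathcal{A}_d$ and in $x^d_i\notin A$ (the case $d=1$ being vacuous).

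I expect the main obstacle to be precisely this last bit of bookkeeping: the growth exponent must be taken small enough (here $1/6$, forced by the terms $t^3uv$ and $t^2 v\,r_2(itu)$) that every error term is genuinely $O(d^{-3/2})$, while $c'_1$ is chosen simultaneously so that $|tu|\le\tfrac12$ uniformly in $d$; the one slightly non-obvious point is to estimate $|r_2(itu)|$ by $2|tu|$ rather than $2|tu|^2$ (valid only because $|tu|\le 1$) so that no power $t^4$ survives. The closed-form evaluation of $\varphi_i$ and the choice of branch of the logarithm are routine but should be recorded with a little care, as the Gaussian integral has a complex quadratic exponent.
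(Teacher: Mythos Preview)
Your proof is correct and follows essentially the same route as the paper: observe that the cubic term vanishes for $x^d_i\notin A$, compute the closed-form CF of $aZ+\tfrac{b}{2}Z^2$, Taylor-expand the logarithm, and control the remainder via condition~\eqref{eq:AnCond1}. The paper packages the CF computation as a separate lemma (Lemma~\ref{lemma:CharaN+bN^2}) and bounds the combinations $(\log\rho'')^2$ and $(\log\rho')^2|\log\rho''|$ directly as elements of $\mathcal{S}^0$ (each $\le c\sqrt d$) rather than bounding $a$ and $b$ individually with exponent $1/6$, but this is only a cosmetic difference in bookkeeping.
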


\begin{rema}
Recall that the set $A$ satisfies the conclusion of Proposition~\ref{prop:auxAndef}. 
The proof of 
Lemma~\ref{lemma:Acharacteristic} requires the control of the functions $\log(\rho)'$ and $\log(\rho)''$
on the complement of $A$, where they are unbounded. It is hence crucial that their argument $x_i^d$ is 
the $i$-th coordinate of a point $\mathbf{x}^d\in\mathcal{A}_d$, 
since, through assumption~\eqref{eq:AnCond1},
we have control over the size of $x_i^d$ in terms of the dimension $d$ of the chain. 
For an analogous reason we need $i>1$. These facts plays a key role in the proof. 
\end{rema}

\begin{proof}
By Lemma~\ref{lemma:CharaN+bN^2},
the following inequality holds for all $|t|\leq d/(4l^2|\log(\rho)''(x^d_i)|)$:
\begin{multline}
\left|\log\varphi_i(t)-\left(i\frac{l^2}{2d}\log(\rho)''(x^d_i)t-\frac{l^2}{2d}\left(\log(\rho)'(x^d_i)\right)^2t^2\right)\right|\\
\leq
\frac{l^4}{d^2}\left(\frac{1}{2}\left(\log(\rho)''(x^d_i)\right)^2t^2+\left(\log(\rho)'(x^d_i)\right)^2\left|\log(\rho)''(x^d_i)\right||t|^3\right)\text{,}\quad
\label{eq:Ineq_math_cal_A_d}
\end{multline}
Since $\mathbf{x}^d\in\mathcal{A}_d$ and $i>1$,
assumption~\eqref{eq:AnCond1}
implies that 
for any 
$f\in\mathcal{S}^0$ there exists $C_f>0$ such that 
$|f(x^d_i)|^2/C_f\leq e^{|x^d_i|}\leq \sum_{i=2}^de^{|x^d_i|}\leq 2d\rho(e^{|x|})$. 
Hence
$|f(x^d_i)|\leq c_f \sqrt{d}$ for all $d\in\NN$ and all $2\leq i\leq d$, where $c_f:=(2C_f\rho(e^{|x|}))^{1/2}$.
Since $\log(\rho)\in\mathcal{S}^4$, both functions $f_1(x):=l^4(\log(\rho)''(x))^2/2$ 
and 
$f_2(x):=(\log(\rho)'(x))^2|\log(\rho)''(x)|$ 
are in $\mathcal{S}^0$. 
Then~\eqref{eq:Ineq_math_cal_A_d} and the constants $c_1:=\max\{c_{f_1},c_{f_2}\}$ 
and $c_1':=1/(4\sqrt{2c_{f_1}})$ yield the inequalities in the lemma. 
\end{proof}

We now deal with the coordinates of $\mathcal{A}_d$ that are in $A$. Compared to 
Lemma~\ref{lemma:Acharacteristic},
this is straightforward as it does not involve the remainder of the coordinates 
of the point in $\mathcal{A}_d$. 

\begin{lem}\label{lemma:BerryAux}
If $x\in A$, then $K(x,Y)$ (cf.~\eqref{eq:Mdef}), where $Y\sim N(x,l^2/d)$, satisfies: 
\begin{enumerate}[(a)]
\item $\mu_K:=\mathbb{E}_{Y}\left[K(x,Y)\right]\leq \frac{l^2c_A^2}{2d}$, where $c_A>0$ is the constant in Proposition~\ref{prop:auxAndef};
\item $|\mathbb{E}_{Y}[(K(x,Y)-\mu_K)^2]- (\log(\rho)'(x))^2\frac{l^2}{d}| \leq C_1 d^{-2}$ for some 
constant $C_1>0$ and all $d\in\NN$;
\item
$\mathbb{E}_{Y}\left[\left|K(x,Y)-\mu_K\right|^3\right]\leq C_2d^{-3/2}$ for some
constant $C_2>0$ and all $d\in\NN$.
\end{enumerate}
Moreover, the constants $C_1$ and $C_2$ do not depend on the choice of $x\in A$.
\end{lem}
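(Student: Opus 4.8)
The plan is to note that, once the definition of $K$ is unpacked, this is an elementary Gaussian moment computation. Fix $x\in A$ and write $a:=\log(\rho)'(x)$, $b:=\log(\rho)''(x)$ and $Z:=Y-x$, so that $Z\sim N(0,\sigma^2)$ with $\sigma^2:=l^2/d$. Since $x\in A$ the indicator $1_A(x)$ in~\eqref{eq:Mdef} equals $1$, hence $K(x,Y)=aZ+\frac{b}{2}Z^2+\frac{a^3}{3}Z^3$. The only facts about the point $x$ that enter are that, by Proposition~\ref{prop:auxAndef}, $1/c_A<|a|<c_A$ and $|b|<a^2<c_A^2$; consequently every constant produced below depends on $c_A$ and $l$ alone, which yields the uniformity over $x\in A$ claimed at the end of the lemma. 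I would treat the three parts in order, using throughout the standard moments of $Z$: all odd moments vanish, $\mathbb{E}[Z^2]=\sigma^2$, $\mathbb{E}[Z^4]=3\sigma^4$, $\mathbb{E}[Z^6]=15\sigma^6$, and $\mathbb{E}[|Z|^p]=c_p\sigma^p$ for every $p>0$.

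Part (a): since odd moments of $Z$ vanish, $\mu_K=\mathbb{E}_Y[K(x,Y)]=\frac{b}{2}\sigma^2=\frac{bl^2}{2d}$, and $b<a^2<c_A^2$ gives $\mu_K\leq l^2c_A^2/(2d)$. Part (b): after centering, $K(x,Y)-\mu_K=aZ+\frac{b}{2}(Z^2-\sigma^2)+\frac{a^3}{3}Z^3$, and on squaring and taking expectations the two cross terms $ab\,\mathbb{E}[Z(Z^2-\sigma^2)]$ and $\frac{a^3b}{3}\mathbb{E}[(Z^2-\sigma^2)Z^3]$ vanish by oddness, while the third, $\frac{2a^4}{3}\mathbb{E}[Z^4]$, equals $2a^4\sigma^4$; together with $\mathbb{E}[(Z^2-\sigma^2)^2]=2\sigma^4$ and $\mathbb{E}[Z^6]=15\sigma^6$ this gives $\mathbb{E}_Y[(K(x,Y)-\mu_K)^2]=a^2\sigma^2+\frac{b^2}{2}\sigma^4+2a^4\sigma^4+\frac{5a^6}{3}\sigma^6$. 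Subtracting the leading term $a^2\sigma^2=(\log(\rho)'(x))^2l^2/d$, the remainder is controlled, via $|a|<c_A$ and $|b|<c_A^2$, by a constant times $\sigma^4=l^4/d^2$ (the $\sigma^6$ term is even $O(d^{-3})$), which is the claim. Part (c): by Minkowski's inequality in $L^3$, $\|K(x,Y)-\mu_K\|_3\leq|a|\,\|Z\|_3+\frac{|b|}{2}\|Z^2-\sigma^2\|_3+\frac{|a|^3}{3}\|Z^3\|_3$, and each of these $L^3$ norms is a fixed constant times a power of $\sigma$ (namely $\sigma$, $\sigma^2$, $\sigma^3$ respectively), so the right-hand side is $O(\sigma)=O(d^{-1/2})$ and cubing yields (c).

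I do not anticipate a genuine obstacle; the computation is routine. The two points requiring a little care are: (i) verifying that the error term in (b) is truly $O(d^{-2})$ and not merely $O(d^{-1})$, which hinges on the exact cancellation of the $a^2\sigma^2$ term and on the vanishing of all odd-order cross terms; and (ii) keeping track that the bounds $1/c_A<|a|<c_A$ and $|b|<a^2$ supplied by $x\in A$ are the only way $x$ enters, so that the constants $C_1$ and $C_2$ are indeed independent of $x$.
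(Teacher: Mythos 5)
Your proof is correct and follows essentially the same route as the paper: all three parts reduce to bounding explicit Gaussian moments of $Z=Y-x\sim N(0,l^2/d)$, with the only input from $x\in A$ being $1/c_A<|\log(\rho)'(x)|<c_A$ and $|\log(\rho)''(x)|<(\log(\rho)'(x))^2<c_A^2$, which gives the claimed uniformity in $x$. The only cosmetic difference is in part~(c), where you apply Minkowski's inequality in $L^3$ directly, whereas the paper bounds $\mathbb{E}_Y[(K(x,Y)-\mu_K)^6]$ and invokes Cauchy--Schwarz; both give the same $O(d^{-3/2})$ conclusion.
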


\begin{proof}
By definition of $A$ in Proposition~\ref{prop:auxAndef} we have
$\left|\log(\rho)'(x)\right|\leq c_A$ and $\left|\log(\rho)''(x)\right|\leq
c^2_A$ for $x\in A$. 
By~\eqref{eq:Mdef},
$\mu_K=\frac{l^2}{2d}\log(\rho)''(x)$ and (a) follows.
Recall $\mathbb{E}_{Y}[(Y-x)^n]$ is either zero (if $n$ is odd) or of order $d^{-n/2}$ (if $n$ is even)
and $\mathbb{E}_{Y}[(Y-x)^2]=l^2/d$. Hence the definition of $K$ in~\eqref{eq:Mdef}, the fact $x\in A$ and part~(a) imply the inequality in 
part~(b). For part~(c), note that an analogous argument yields
$\mathbb{E}_{Y}\left[(K(x,Y)-\mu_K)^6\right]\leq C'd^{-3}$ for some constant $C'>0$. Cauchy's inequality concludes the proof of the lemma. 
\end{proof}

\begin{lem}\label{lemma:nonAcharacteristic}
Let assumptions of Lemma~\ref{lemma:BerryAux} hold 
and denote by $\varphi$ the characteristic function of $K(x,Y)$.  
There exist positive constants $c_2$ and $c'_2$, such that the following holds for all $x\in A$:
\begin{equation}
\label{eq:nonAcharacteristic}
\left|\log\varphi(t)-\left(it\frac{l^2}{2d}\log(\rho)''(x)-\frac{t^2l^2}{2d}\log(\rho)'(x)^2\right)\right|\leq
c_2\left(\frac{t^2}{d^2}+\frac{|t|^3}{d^{3/2}}+\frac{t^4}{d^2}\right)\text{,}\quad
|t|\leq c'_2\sqrt{d}\text{.}
\end{equation}
\end{lem}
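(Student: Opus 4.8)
The plan is to run a Berry--Esseen style cumulant expansion, fed by the moment estimates already collected in Lemma~\ref{lemma:BerryAux}. Write $\widetilde K:=K(x,Y)-\mu_K$ for the centred variable, so that $\varphi(t)=e^{it\mu_K}\widetilde\varphi(t)$ with $\widetilde\varphi(t):=\mathbb{E}_Y[e^{it\widetilde K}]$, and hence $\log\varphi(t)=it\mu_K+\log\widetilde\varphi(t)$ for the branch of the logarithm vanishing at $t=0$. From~\eqref{eq:Mdef} and the moments of $Y-x\sim N(0,l^2/d)$ one has $\mu_K=\tfrac{l^2}{2d}\log(\rho)''(x)$, so the term $it\mu_K$ matches the first term on the left of~\eqref{eq:nonAcharacteristic} exactly; everything thus reduces to estimating $\log\widetilde\varphi(t)+\tfrac{t^2 l^2}{2d}\log(\rho)'(x)^2$.

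Next I would Taylor expand the exponential inside $\widetilde\varphi$. Using the elementary bound $|e^{iu}-1-iu+u^2/2|\le\min(|u|^3/6,u^2)$ together with $\mathbb{E}_Y[\widetilde K]=0$, one obtains $\widetilde\varphi(t)=1-\tfrac{t^2}{2}\sigma_K^2+R(t)$, where $\sigma_K^2:=\mathbb{E}_Y[\widetilde K^2]$ and $|R(t)|\le\tfrac{|t|^3}{6}\mathbb{E}_Y[|\widetilde K|^3]\le\tfrac{C_2}{6}|t|^3 d^{-3/2}$ by Lemma~\ref{lemma:BerryAux}(c). Lemma~\ref{lemma:BerryAux}(b) then gives $\sigma_K^2=\tfrac{l^2}{d}\log(\rho)'(x)^2+\varepsilon$ with $|\varepsilon|\le C_1 d^{-2}$, and since $|\log(\rho)'(x)|\le c_A$ on $A$ we have $\sigma_K^2=\O(1/d)$ uniformly in $x\in A$.

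Now choose $c_2'>0$ small enough that $|t|\le c_2'\sqrt d$ forces $z:=-\tfrac{t^2}{2}\sigma_K^2+R(t)$ to satisfy $|z|\le 1/2$; this is possible because on that range $\tfrac{t^2}{2}\sigma_K^2=\O(t^2/d)=\O((c_2')^2)$ and $|R(t)|=\O(|t|^3 d^{-3/2})=\O((c_2')^3)$, with constants independent of $x\in A$. Then $|\widetilde\varphi(t)|=|1+z|\ge1/2$ on the whole interval, so $\log\widetilde\varphi$ is well defined and continuous there, and the bound $|\log(1+z)-z|\le|z|^2$ (valid for complex $|z|\le1/2$, by summing the Taylor series) yields $\log\widetilde\varphi(t)=-\tfrac{t^2}{2}\sigma_K^2+R(t)+\O(|z|^2)$. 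Finally I would collect orders: $-\tfrac{t^2}{2}\sigma_K^2=-\tfrac{t^2 l^2}{2d}\log(\rho)'(x)^2+\O(t^2/d^2)$; $R(t)=\O(|t|^3 d^{-3/2})$; and $|z|^2\le 2\bigl(\tfrac{t^2}{2}\sigma_K^2\bigr)^2+2R(t)^2=\O(t^4/d^2)+\O(t^6/d^3)$, where for $|t|\le c_2'\sqrt d$ one has $t^6/d^3=(t^2/d)^3\le (c_2')^2\,t^4/d^2$, so $|z|^2=\O(t^4/d^2)$. Adding the three contributions to $it\mu_K$ produces exactly the right-hand side of~\eqref{eq:nonAcharacteristic}, with a suitable $c_2$.

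The only point requiring genuine care --- as opposed to routine calculation --- is keeping all implied constants uniform over $x\in A$: this is precisely why Lemma~\ref{lemma:BerryAux} was stated with $C_1,C_2$ independent of $x\in A$, and why one should use the crude bounds $|\log(\rho)'(x)|\le c_A$, $|\log(\rho)''(x)|\le c_A^2$ on $A$ rather than anything sharper. A secondary point is to check that the sixth-order remainder $t^6/d^3$ is genuinely dominated by $t^4/d^2$ in the admissible range, which is what forces the restriction $|t|\le c_2'\sqrt d$; any larger window would leave an uncontrolled term.
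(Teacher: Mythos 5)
Your argument is correct and follows essentially the same route as the paper: the paper factors out the cumulant expansion step as Lemma~\ref{prop:NearNormalChar} (center $K$, expand the CF of the centered variable, take the log, use $|\log(1+z)-z|\le|z|^2$), applies it, and then substitutes the moment estimates from Lemma~\ref{lemma:BerryAux}, whereas you carry out the same expansion inline. The only cosmetic difference is in controlling the quadratic remainder $|z|^2$: the paper bounds $|z|\le t^2\sigma_K^2/2$ directly (giving just a $t^4$ term on the range $|t|\le 1/\sigma_K$), while you split $|z|^2\le 2(t^2\sigma_K^2/2)^2+2R(t)^2$ and then absorb the resulting $t^6/d^3$ term into $t^4/d^2$ using the window $|t|\le c_2'\sqrt d$ --- both handle the issue cleanly, and both require the same observation that the window scales as $\sqrt d$.
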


\begin{proof}
Let $\sigma^2_K:=\mathbb{E}_{Y}[(K(x,Y)-\mu_K)^2]$ and recall $\mu_K=\frac{l^2}{2d}\log(\rho)''(x)$. 
By Lemma~\ref{prop:NearNormalChar} we have
\begin{equation}\label{eq:auxAchar}
\left|\log\varphi(t)-\left(it\frac{l^2}{2d}\log(\rho)''(x)-\frac{t^2}{2}\sigma^2_K\right)\right|
\leq|t|^3\mathbb{E}_{Y}\left|K(x,Y)-\mu_K\right|^3/6+t^4\sigma^4_K/4\text{,}\quad |t|\leq \frac{1}{\sigma_K}\text{.}
\end{equation}
By  Lemma~\ref{lemma:BerryAux}(b) we  have 
$|\sigma^2_K-l^2\log(\rho)'(x)^2/d|\leq C_1d^{-2}$.
Hence 
$\sigma^2_K\leq d^{-1}/\sqrt{c_2'}$,
where
$c_2':= 1/(l^2c_A^2+ C_1)^2$,
and
$\sigma^4_K\leq C_1'd^{-2}$ for some $C_1'>0$.
This, together with Lemma~\ref{lemma:BerryAux}(c), implies that there exists  a constant $c_2>0$, 
such that the inequality in~\eqref{eq:nonAcharacteristic} follows from~\eqref{eq:auxAchar}
for all $|t|\leq c_2' d^{1/2} \leq 1/\sigma_K$ and $x\in A$. 
\end{proof}

\begin{lem}\label{lemma:CFtoCDF}
For any $d\in\NN$ and $\mathbf{x}^d\in \mathcal{A}_d$, let  $\Phi^d_{K}$ and
$\Phi^d_{\mathcal{N}}$ be the distribution functions of $\sum_{i=2}^dK(x^d_i,Y^d_i)$ and
$\mathcal{N}(\mathbf{x}^d,\mathbf{Y}^d)$. Then there exists $C>0$ such
that   
$$\sup_{x\in\RR}\left|\Phi^d_{\mathcal{N}}(x)-\Phi^d_{K}(x)\right|\leq C
d^{-1/2}\qquad \text{for every $d\in\NN$ and $\mathbf{x}^d\in \mathcal{A}_d$.}$$
\end{lem}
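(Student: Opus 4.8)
The plan is to run the classical Berry--Esseen argument, i.e.\ to apply Esseen's smoothing inequality to the distribution functions $\Phi^d_{K}$ of $\sum_{i=2}^dK(x^d_i,Y^d_i)$ and $\Phi^d_{\mathcal{N}}$ of $\mathcal{N}(\mathbf{x}^d,\mathbf{Y}^d)$, and to control the resulting integral of the difference of characteristic functions via Lemmas~\ref{lemma:Acharacteristic} and~\ref{lemma:nonAcharacteristic}. Since $\Phi^d_{\mathcal{N}}$ is the distribution function of a Gaussian with variance $\sigma^2_{\mathcal{N}}(\mathbf{x}^d)=(l^2/d)\sum_{i=2}^d(\log(\rho)'(x^d_i))^2$, its density is at most $m_d:=(2\pi\sigma^2_{\mathcal{N}}(\mathbf{x}^d))^{-1/2}$, so for every $T>0$
$$\sup_{x\in\RR}|\Phi^d_{\mathcal{N}}(x)-\Phi^d_{K}(x)|\leq\frac{1}{\pi}\int_{-T}^{T}\frac{|\varphi^d_{K}(t)-\varphi^d_{\mathcal{N}}(t)|}{|t|}\,dt+\frac{24\,m_d}{\pi T},$$
where $\varphi^d_{K}$ and $\varphi^d_{\mathcal{N}}$ are the characteristic functions of the two random variables. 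Because $a$ is sluggish, $a_d/\sqrt{d}\to0$, so assumption~\eqref{eq:AnCond3} forces $\sigma^2_{\mathcal{N}}(\mathbf{x}^d)\to l^2J$ uniformly over $\mathbf{x}^d\in\mathcal{A}_d$, where $J=\rho((\log(\rho)')^2)>0$; in particular $\sigma^2_{\mathcal{N}}(\mathbf{x}^d)\in[l^2J/4,2l^2J]$ and $m_d\leq m_0$ for all $\mathbf{x}^d\in\mathcal{A}_d$ and $d\geq d_0$, for suitable constants $m_0,d_0$. Taking $T:=c_0\sqrt{d}$ makes the boundary term $O(d^{-1/2})$, so everything reduces to bounding the integral by $O(d^{-1/2})$, uniformly over $\mathbf{x}^d\in\mathcal{A}_d$.

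For the integral I would factor $\varphi^d_{K}(t)=\prod_{i=2}^d\varphi_i(t)$ (notation of Lemma~\ref{lemma:Acharacteristic}) and $\varphi^d_{\mathcal{N}}(t)=\prod_{i=2}^d\psi_i(t)$ with $\psi_i(t):=\exp\big(it\,l^2\log(\rho)''(x^d_i)/(2d)-t^2l^2(\log(\rho)'(x^d_i))^2/(2d)\big)$, noting that for $|t|\leq c_0\sqrt{d}$ --- with $c_0$ no larger than the constants $c'_1,c'_2$ of Lemmas~\ref{lemma:Acharacteristic} and~\ref{lemma:nonAcharacteristic} --- those two lemmas apply to each factor (and in particular give $\varphi_i(t)\neq0$, so $\log\varphi^d_{K}$ has a continuous branch from $t=0$). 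Writing $L(t):=\sum_{i=2}^d(\log\varphi_i(t)-\log\psi_i(t))$, so that $\varphi^d_{K}(t)-\varphi^d_{\mathcal{N}}(t)=\varphi^d_{\mathcal{N}}(t)(e^{L(t)}-1)$, summing the $d-1$ per-coordinate estimates of Lemmas~\ref{lemma:Acharacteristic}--\ref{lemma:nonAcharacteristic} gives, with $c_1,c_2$ their error constants,
$$|L(t)|\leq c_2\frac{t^2}{d}+c_1\frac{t^2}{\sqrt{d}}+(c_1+c_2)\frac{|t|^3}{\sqrt{d}}+c_2\frac{t^4}{d},\qquad |t|\leq c_0\sqrt{d}.$$
On this range $|t|^3/\sqrt{d}\leq c_0t^2$ and $t^4/d\leq c_0^2t^2$, so by first shrinking $c_0$ and then enlarging $d_0$ one arranges $|L(t)|\leq\tfrac{1}{4}\sigma^2_{\mathcal{N}}(\mathbf{x}^d)\,t^2$ for all $|t|\leq c_0\sqrt{d}$, $d\geq d_0$ and $\mathbf{x}^d\in\mathcal{A}_d$. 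Since $|\varphi^d_{\mathcal{N}}(t)|=e^{-t^2\sigma^2_{\mathcal{N}}(\mathbf{x}^d)/2}$ and $|e^{L}-1|\leq|L|e^{|L|}$, this yields
$$|\varphi^d_{K}(t)-\varphi^d_{\mathcal{N}}(t)|\leq e^{-t^2\sigma^2_{\mathcal{N}}(\mathbf{x}^d)/4}|L(t)|\leq e^{-\gamma t^2}|L(t)|,\qquad \gamma:=l^2J/16>0,$$
on $|t|\leq c_0\sqrt{d}$. Dividing $|L(t)|$ by $|t|$ and using $1/d\leq1/\sqrt{d}$, the integral is at most $d^{-1/2}\int_{\RR}e^{-\gamma t^2}\big((c_1+c_2)|t|+(c_1+c_2)t^2+c_2|t|^3\big)\,dt=O(d^{-1/2})$, which proves the claim for $d\geq d_0$; for the finitely many $d<d_0$ the left-hand side is trivially at most $1\leq\sqrt{d_0}\,d^{-1/2}$, so enlarging $C$ completes the argument.

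The main obstacle is that the per-coordinate error supplied by Lemmas~\ref{lemma:Acharacteristic}--\ref{lemma:nonAcharacteristic}, of order $|t|^3/d^{3/2}$, accumulates over the $d-1$ coordinates to order $|t|^3/\sqrt{d}$, which is \emph{not} small on the integration window $|t|\lesssim\sqrt{d}$; the estimate survives only because the Gaussian factor $|\varphi^d_{\mathcal{N}}(t)|=e^{-t^2\sigma^2_{\mathcal{N}}/2}$ decays fast enough to absorb both $|L(t)|$ and the extra $e^{|L(t)|}$ coming from $|e^{L}-1|\leq|L|e^{|L|}$. Making this precise rests on two points: (i) a uniform lower bound $\sigma^2_{\mathcal{N}}(\mathbf{x}^d)\gtrsim l^2J$ over $\mathcal{A}_d$, which is exactly what assumption~\eqref{eq:AnCond3} delivers; and (ii) choosing the cutoff constant $c_0$ small enough relative to $l^2J$ and to the fixed constants $c_1,c_2$ from the characteristic-function lemmas, so that $e^{-t^2\sigma^2_{\mathcal{N}}/2+|L(t)|}\leq e^{-\gamma t^2}$ holds throughout $|t|\leq c_0\sqrt{d}$.
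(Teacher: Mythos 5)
Your proof is correct and follows essentially the same route as the paper: apply Esseen's smoothing inequality with cutoff $T\asymp\sqrt{d}$, sum the per-coordinate characteristic-function estimates from Lemmas~\ref{lemma:Acharacteristic} and~\ref{lemma:nonAcharacteristic}, use assumption~\eqref{eq:AnCond3} to lower-bound $\sigma^2_{\mathcal{N}}$ uniformly over $\mathcal{A}_d$, and absorb the accumulated error into the Gaussian decay via $|e^L-1|\leq|L|e^{|L|}$. The only difference is organizational: the paper packages the smoothing step plus the exponential estimate into Proposition~\ref{prop:Berry}, whereas you inline them, but the argument and all key constants are the same.
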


\begin{proof}
Let $\varphi_K$ and $\varphi_\mathcal{N}$  be the CFs of
$\sum_{i=2}^dK(x^d_i,Y^d_i)$ and $\mathcal{N}(\mathbf{x}^d,\mathbf{Y}^d)$, respectively. 
We will compare $\varphi_K$ and $\varphi_\mathcal{N}$   
and apply Proposition~\ref{prop:Berry} to establish the lemma. 
Let $\varphi_i$ be the CF of $K(x^d_i,Y^d_i)$ and recall, by~\eqref{eq:Ndef}, 
$\varphi_\mathcal{N}(t)=\exp\left(\frac{1}{2}it\frac{l^2}{d}\sum_{i=2}^d\log(\rho)''(x^d_i)-\frac{1}{2}t^2\frac{l^2}{d}\sum_{i=2}^d\left(\log(\rho)'(x^d_i)\right)^2\right)$.
Define the positive constants
$c:=\max\{c_1,c_2\}$ and $c':=\min\{1,l^2J/(32c),c'_1,c'_2\}$, where 
the constants $c_1,c_1'$ (resp. $c_2,c_2'$) are given in Lemma~\ref{lemma:Acharacteristic} 
(resp. Lemma~\ref{lemma:nonAcharacteristic}) and $J$ is as in assumption~\eqref{eq:AnCond3}. 
Note that the constants $c,c'$ do not depend on the choice of $\mathbf{x}^d\in \mathcal{A}_d$.
Lemmas~\ref{lemma:Acharacteristic} and~\ref{lemma:nonAcharacteristic} imply the following inequality 
for all $d\in\NN$ and 
$\mathbf{x}^d\in \mathcal{A}_d$:
\begin{equation*}\left|\log\varphi_K(t)-\log\varphi_\mathcal{N}(t)\right|\leq
\sum_{i=2}^d\left|\log\varphi_i(t)-\left(it\frac{l^2}{2d}\log(\rho)''(x^d_i)-\frac{t^2l^2}{2d}\left(\log(\rho)(x^d_i\right)^2\right)\right| \leq
R(t), 
\end{equation*}
for all $|t|\leq r$, where $r:= c'\sqrt{d}$
and
$R(t):= c(t^2+|t|^3+t^4/\sqrt{d})/\sqrt{d}$.
Since $|t|^3\leq \sqrt{d}c't^2$ and $t^4\leq dc'^2t^2$ for 
$|t|\leq r$,
we have 
\begin{equation}
\label{eq:auxBerry}
 R(t)\leq t^2(c/\sqrt{d}+cc'+cc'^2) \leq t^2 (c/\sqrt{d}+2cc')\qquad \text{for all $t\in[-r,r]$.}
\end{equation}

By assumption~\eqref{eq:AnCond3}, there exists $d_0'\in\NN$ such that 
the variance 
$\sigma^2_{\mathcal{N}}(\mathbf{x}^d)=l^2/d\sum_{i=2}^d\left(\log(\rho)'(x^d_i)\right)^2$
of 
$\mathcal{N}(\mathbf{x}^d,\mathbf{Y}^d)$ 
satisfies $\sigma^2_{\mathcal{N}}(\mathbf{x}^d)\geq l^2J/2$ for all 
$d\geq d_0'$ and  $\mathbf{x}^d\in \mathcal{A}_d$. 
Let $\gamma:=1/2$ and pick 
$d_0\in\NN$, greater than 
$\max\{d_0',(16cl^{-2}/J)^2\}$.
Then, for any $d\geq d_0$, the inequality $c/\sqrt{d}\leq \gamma l^2J/8$ holds.
Since $c'\leq l^2J/(32c)$, we have 
$2cc'\leq \gamma l^2J/8$, 
and the bound in~\eqref{eq:auxBerry} implies
$R(t)\leq 
\frac{1}{2}t^2 \gamma l^2J/2
\leq 
\frac{1}{2}t^2 \gamma \sigma^2_{\mathcal{N}}(\mathbf{x}^d)$
for all $t\in[-r,r]$.
By Proposition~\ref{prop:Berry}, 
for all $d\geq d_0$,
$\sup_{x\in\RR}\left|\Phi^d_{\mathcal{N}}(x)-\Phi^d_{K}(x)\right|$ is bounded above 
by
\begin{equation*}
\int_{\RR}\frac{R(t)}{\pi |t|}\exp\left(-\frac{(1-\gamma)\sigma^2_{\mathcal{N}}(\mathbf{x}^d)t^2}{2}\right)dt
+\frac{12\sqrt{2}}{ \pi^{3/2}\sigma_{\mathcal{N}}(\mathbf{x}^d) r}\leq C'/\sqrt{d},
\end{equation*}
where
$C':= c\int_\RR(|t|+t^2+|t|^3)\exp(-l^2Jt^2/8)dt
+ \frac{24\sqrt{2}}{\pi^{3/2}l^2Jc'}$.
Since the left-hand side of the inequality in the lemma is bounded above by $1$,
the inequality holds for all $d\in\NN$ if we define 
$C:=\max\{C',\sqrt{d_0}\}$. 
\end{proof}

\begin{proof}[Proof of Proposition~\ref{prop:HatGBreveG}]
Since $|1\wedge e^y- 1\wedge e^x|\leq |x-y|$ for all $x,y\in\RR$, by Lemma~\ref{lemma:MNdifference} we have 
$$\left|\mathbb{E}_{\mathbf{Y}^d-}\left[1\wedge
e^{\mathcal{N}(\mathbf{x}^d,\mathbf{Y}^d)}\right]-\mathbb{E}_{\mathbf{Y}^d-}\left[1\wedge
e^{\sum_{i=2}^dK(x^d_i,Y^d_i)}\right]\right|\leq C' d^{-1/2}$$
for some constant $C'>0$ and all $d\in\NN$.
Recall
$e^x 1_{\{x<0\}} = 1\wedge e^x - 1 + 1_{\{x\leq 0\}}$ for all $x\in\RR\setminus\{0\}$.
Hence Lemmas~\ref{lemma:MNdifference} and~\ref{lemma:CFtoCDF} yield
\begin{multline*}
\left|\mathbb{E}_{\mathbf{Y}^d-}\left[e^{\mathcal{N}(\mathbf{x}^d,\mathbf{Y}^d)}1_{\left\{\mathcal{N}(\mathbf{x}^d,\mathbf{Y}^d)<0\right\}}\right]-\mathbb{E}_{\mathbf{Y}^d-}\left[e^{\sum_{i=2}^dK(x^d_i,Y^d_i)}1_{\left\{\sum_{i=2}^dK(x^d_i,Y^d_i)<0\right\}}\right]\right|\\
\leq \left|\mathbb{E}_{\mathbf{Y}^d-}\left[1\wedge
e^{\mathcal{N}(\mathbf{x}^d,\mathbf{Y}^d)}\right]-\mathbb{E}_{\mathbf{Y}^d-}\left[1\wedge
e^{\sum_{i=2}^dK(x^d_i,Y^d_i)}\right]\right|+\left|\Phi^d_{\mathcal{N}}(0)-\Phi^d_{K}(0)\right|\leq
C'' d^{-1/2}
\end{multline*}
for some $C''>0$ and all $d\in \NN$. The proposition follows. 
\end{proof}

\begin{proof}[Proof of Proposition~\ref{prop:BreveGG}]
For any $\mathbf{x}^d\in\mathcal{A}_d$, by \cite[Proposition~2.4]{RobertsGelmanGilks97}, we have
\begin{eqnarray}
&&\mathbb{E}_{\mathbf{Y}^d-}\left[e^{\mathcal{N}(\mathbf{x}^d,\mathbf{Y}^d)}1_{\left\{\mathcal{N}(\mathbf{x}^d,\mathbf{Y}^d)<0\right\}}\right]=e^{\mu_{\mathcal{N}}(\mathbf{x}^d)+\frac{\sigma^2_{\mathcal{N}}(\mathbf{x}^d)}{2}}\Phi\left(-\sigma_{\mathcal{N}}(\mathbf{x}^d)-\frac{\mu_{\mathcal{N}}(\mathbf{x}^d)}{\sigma_{\mathcal{N}}(\mathbf{x}^d)}\right)\text{,}\nonumber\\
&&\mathbb{E}_{\mathbf{Y}^d-}\left[1\wedge e^{\mathcal{N}(\mathbf{x}^d,\mathbf{Y}^d)}\right]=
\mathbb{E}_{\mathbf{Y}^d-}\left[e^{\mathcal{N}(\mathbf{x}^d,\mathbf{Y}^d)}1_{\left\{\mathcal{N}(\mathbf{x}^d,\mathbf{Y}^d)<0\right\}}\right]
+
\Phi\left(\frac{\mu_{\mathcal{N}}(\mathbf{x}^d)}{\sigma_{\mathcal{N}}(\mathbf{x}^d)}\right)\text{.}\nonumber
\end{eqnarray}
where $\Phi$ is the distribution of a standard normal random variable.
Note first that it is sufficient to prove the inequality in the proposition for all $d> d_0$ for some $d_0\in\NN$,
since the expectations above are bounded by $1$ and we can hence increase the constant $C$ so that 
the first $d_0$ inequalities are also satisfied. 

Recall the formulas for $\mu_{\mathcal{N}}(\mathbf{x}^d)$ and
$\sigma^2_{\mathcal{N}}(\mathbf{x}^d)$ from \eqref{eq:Ndef}. By assumptions~\eqref{eq:AnCond3} and~\eqref{eq:AnCond4} it follows that 
$\left|\mu_{\mathcal{N}}(\mathbf{x}^d)+\sigma^2_{\mathcal{N}}(\mathbf{x}^d)/2\right|\leq c a_d/\sqrt{d}$ for some constant $c>0$ 
and all  large $d$
and $\mathbf{x}^d\in\mathcal{A}_d$. 
Note that $S_a:=\sup_{d\in\NN}(a_d/\sqrt{d})<\infty$ since $\{a_d\}_{d\in\NN}$ is sluggish. 
The function $x\mapsto e^x$ is Lipschitz on $[-cS_a,cS_a]$ with constant $e^{cS_a}$. 
Consequently $\left|e^{\mu_{\mathcal{N}}(\mathbf{x}^d)+\sigma^2_{\mathcal{N}}(\mathbf{x}^d)/2}-1\right|\leq e^{cS_a}a_d/\sqrt{d}$ 
for large $d$ and uniformly in $\mathbf{x}^d\in\mathcal{A}_d$.

By assumption~\eqref{eq:AnCond3}, 
for all large  $d\in\NN$ and all $\mathbf{x}^d\in\mathcal{A}_d$, we have
$\sigma_{\mathcal{N}}(\mathbf{x}^d)\geq l\sqrt{J}/\sqrt{2}$. 
Hence, since the function $x\mapsto \sqrt{x}$ is Lipschitz with
constant $c_1:=1/(l\sqrt{2J})$ on $[\frac{l^2J}{2},\infty)$, we get
$\left|\sigma_{\mathcal{N}}(\mathbf{x}^d)/2-l\sqrt{J}/2\right|\leq (c_1/2) \left|\sigma^2_{\mathcal{N}}(\mathbf{x}^d)-l^2J\right|
\leq c_2 a_d/\sqrt{d}$, where constant $c_2>0$ exists by~\eqref{eq:AnCond3}. 
Moreover,
$|(\mu_{\mathcal{N}}(\mathbf{x}^d)+\sigma^2_{\mathcal{N}}(\mathbf{x}^d)/2)/\sigma_{\mathcal{N}}(\mathbf{x}^d)|\leq c_3 a_d/\sqrt{d}$ 
for $c_3>0$ and all large $d$. 

Since
$\sigma_{\mathcal{N}}(\mathbf{x}^d)+\mu_{\mathcal{N}}(\mathbf{x}^d)/\sigma_{\mathcal{N}}(\mathbf{x}^d)
=\left(\mu_{\mathcal{N}}(\mathbf{x}^d)+
\sigma^2_{\mathcal{N}}(\mathbf{x}^d)/2\right)/\sigma_{\mathcal{N}}(\mathbf{x}^d)+\sigma_{\mathcal{N}}(\mathbf{x}^d)/2$,
the inequalities in the previous paragraph imply that there exists $c_4>0$ such that 
$|\sigma_{\mathcal{N}}(\mathbf{x}^d)+\mu_{\mathcal{N}}(\mathbf{x}^d)/\sigma_{\mathcal{N}}(\mathbf{x}^d) - l\sqrt{J}/2|\leq c_4 a_d/\sqrt{d}$
for large $d$ and uniformly in $\mathbf{x}^d\in\mathcal{A}_d$.
Since $\Phi$ is Lipschitz with constant
$1/\sqrt{2\pi}$, there exists a constant $C_1'>1$, 
such that 
$$\left|\mathbb{E}_{\mathbf{Y}^d-}\left[e^{\mathcal{N}(\mathbf{x}^d,\mathbf{Y}^d)}1_{\left\{\mathcal{N}(\mathbf{x}^d,\mathbf{Y}^d)<0\right\}}\right]-\Phi\left(\frac{-l\sqrt{J}}{2}\right)\right|\leq C_1'\frac{a_d}{\sqrt{d}}
$$
holds 
for 
all large $d$ and all 
$\mathbf{x}^d\in\mathcal{A}_d$. 
Similarly, 
$\left|\mathbb{E}_{\mathbf{Y}^d-}\left[1\wedge e^{\mathcal{N}(\mathbf{x}^d,\mathbf{Y}^d)}\right]
-2\Phi\left(\frac{-l\sqrt{J}}{2}\right)\right|\leq C_2'\frac{a_d}{\sqrt{d}}$
for some $C_2'>0$
all large $d$ and all 
$\mathbf{x}^d\in\mathcal{A}_d$, 
and the proposition follows. 
\end{proof}

\subsection{Proof of Proposition~\ref{cor:Lpnorm}}
\label{subsec:proofthm2}
We will now prove the following result.

\begin{prop}\label{thm:Lp}
Let $a=\{a_d\}_{d\in\NN}$ be a sluggish sequence and $p\in[1,\infty)$. There
exists a constant $C_4$ (depending on $a$ and $p$) such that for every
$f\in\mathcal{S}^3$ and all $d\in\NN$ we have:
$$\left\|\mathcal{G}f-{\mathcal{G}}_df\right\|_p\leq
C_4\left(\sum_{i=1}^3\|f^{(i)}\|_{\infty,1/2}\right) \left(\frac{a_d}{\sqrt{d}}
+e^{-a_d^2/p}\right)\text{.}$$
\end{prop}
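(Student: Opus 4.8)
The plan is to split $\RR^d=\mathcal{A}_d\cup(\RR^d\setminus\mathcal{A}_d)$, where $\mathcal{A}_d$ is the set of Theorem~\ref{thm:GdG} (equivalently Definition~\ref{def:cond}) for the given sluggish sequence $a$, to bound $\|\mathcal{G}f-\mathcal{G}_df\|_p^p$ over each piece, and to recombine via $(u+v)^{1/p}\le u^{1/p}+v^{1/p}$ for $u,v\ge 0$. Over $\mathcal{A}_d$ we invoke Theorem~\ref{thm:GdG} directly: raising the pointwise estimate $|\mathcal{G}f(x^d_1)-\mathcal{G}_df(\mathbf{x}^d)|\le C_3\left(\sum_{i=1}^3\|f^{(i)}\|_{\infty,1/2}\right)e^{|x^d_1|}a_d/\sqrt d$ to the power $p$ and integrating over $\mathcal{A}_d\subseteq\RR^d$ yields $\int_{\mathcal{A}_d}|\mathcal{G}f-\mathcal{G}_df|^p\,d\rho_d\le C_3^p\left(\sum_{i=1}^3\|f^{(i)}\|_{\infty,1/2}\right)^p(a_d/\sqrt d)^p\,\rho(e^{p|x|})$, where $\rho(e^{p|x|})=\int_\RR e^{p|x|}\rho(x)\,dx<\infty$ by the super-exponential tail assumption~\eqref{eq:SupperExponentialTails}. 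Taking $p$-th roots gives a contribution of the required order $a_d/\sqrt d$.

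For the complement I would establish a crude but dimension-\emph{uniform} pointwise bound: there is a constant $C_5$, depending only on $\rho$ and $l$, such that $|\mathcal{G}f(x^d_1)-\mathcal{G}_df(\mathbf{x}^d)|\le C_5\left(\sum_{i=1}^3\|f^{(i)}\|_{\infty,1/2}\right)e^{|x^d_1|}$ for every $\mathbf{x}^d\in\RR^d$ and $d\in\NN$. For $\mathcal{G}f$ this is immediate from~\eqref{eq:GenU} together with $f\in\mathcal{S}^3$ and $\log(\rho)\in\mathcal{S}^4$. For $\mathcal{G}_df$ one Taylor-expands $f(Y^d_1)-f(x^d_1)$ to first order in~\eqref{eq:MetropolisGenerator} (recall $Y^d_1-x^d_1\sim N(0,l^2/d)$): the second-order remainder is $\O(e^{|x^d_1|})$ by the elementary Gaussian moment estimates used throughout Section~\ref{sec:proofs} (Proposition~\ref{prop:TaylorEstimate}), while the linear term $d\,f'(x^d_1)\,\mathbb{E}_{\mathbf{Y}^d}[(Y^d_1-x^d_1)\,\alpha(\mathbf{x}^d,\mathbf{Y}^d)]$ needs care, since estimating it via $0\le\alpha\le1$ alone produces a spurious factor $\sqrt d$. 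One removes that factor by subtracting $d\,f'(x^d_1)\,\mathbb{E}[Y^d_1-x^d_1]=0$ and using that, conditionally on the coordinates $2,\dots,d$, the acceptance ratio $y\mapsto\alpha$ is Lipschitz with local constant $|(\log\rho)'(y)|\le\|(\log\rho)'\|_{\infty,1/2}e^{|y|/2}$ (finite because $\log(\rho)\in\mathcal{S}^4$); combined with $\mathbb{E}[(Y^d_1-x^d_1)^2e^{|Y^d_1-x^d_1|/2}]=\O(1/d)$ this leaves $\O(e^{|x^d_1|})$ for the linear term, uniformly in the remaining coordinates and in $d$.

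The structural observation that makes the complement work is that the conditions~\eqref{eq:AnCond1}--\eqref{eq:AnCond4} defining $\mathcal{A}_d$ restrict only the coordinates $x^d_2,\dots,x^d_d$, so $\mathcal{A}_d=\RR\times\tilde{\mathcal{A}}_{d-1}$ for some $\tilde{\mathcal{A}}_{d-1}\subseteq\RR^{d-1}$ with $\rho_{d-1}(\RR^{d-1}\setminus\tilde{\mathcal{A}}_{d-1})=\rho_d(\RR^d\setminus\mathcal{A}_d)\le c_1 e^{-a_d^2}$ by Proposition~\ref{prop:AdSizes}. Fubini then gives $\int_{\RR^d\setminus\mathcal{A}_d}e^{p|x^d_1|}\,d\rho_d=\rho(e^{p|x|})\,\rho_d(\RR^d\setminus\mathcal{A}_d)\le\rho(e^{p|x|})\,c_1 e^{-a_d^2}$ with \emph{no} loss in the exponent — a H\"older splitting of this integral would only yield $e^{-a_d^2(1-1/q)}$, which is too weak. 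Hence the uniform bound gives $\int_{\RR^d\setminus\mathcal{A}_d}|\mathcal{G}f-\mathcal{G}_df|^p\,d\rho_d\le C_5^p\left(\sum_{i=1}^3\|f^{(i)}\|_{\infty,1/2}\right)^p\rho(e^{p|x|})\,c_1 e^{-a_d^2}$, whose $p$-th root is of order $e^{-a_d^2/p}$. Adding the two contributions proves the proposition, with $C_4$ depending on $a$ (through $C_3$ and $c_1$) and on $p$ (through $\rho(e^{p|x|})^{1/p}$ and $C_5$); the case $d=1$ is trivial and absorbed into $C_4$. The main obstacle is precisely the dimension-uniform estimate of $\mathcal{G}_df$: extracting the cancellation in its linear Taylor term is what kills the $\sqrt d$ that would otherwise ruin the exponent on the complement.
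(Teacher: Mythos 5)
Your proof is correct and follows essentially the same route as the paper's: split $\RR^d$ into $\mathcal{A}_d$ and its complement, apply Theorem~\ref{thm:GdG} on $\mathcal{A}_d$, establish a dimension-uniform pointwise bound of order $e^{|x^d_1|}$ on the complement (the paper isolates this as Lemma~\ref{lemma:AuxLp1}), and exploit the product structure $\mathcal{A}_d=\RR\times\tilde{\mathcal{A}}_{d-1}$ so that Fubini factors out the $x^d_1$-integral exactly. Your way of getting the crude bound (first-order Taylor in $f$, then killing the linear term by subtracting the acceptance probability at $Y^d_1=x^d_1$ using the zero conditional mean of $Y^d_1-x^d_1$ and the Lipschitz property of $\alpha$) is a minor variant of the paper's symmetrization in Lemma~\ref{lemma:AuxLp1}; both extract the same cancellation that removes the spurious $\sqrt d$.
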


In the case $p=2$, define $a_d:=\sqrt{2\log(d)}$ for $d\in\NN\setminus\{1\}$
and note that Proposition~\ref{cor:Lpnorm} then 
follows as a special case of Proposition~\ref{thm:Lp}.

\begin{lem}\label{lemma:AuxLp1}
There exists a constant $C$ such that for all $f\in\mathcal{S}^3$ and all
$d\in\NN$ we have: $$\max\left\{\left|\mathcal{G}f(\mathbf{x}^d)\right|,\left|\mathcal{G}_df(\mathbf{x}^d)\right|\right\}
\leq C
e^{|x^d_1|}\sum_{i=1}^2\|f^{(i)}\|_{\infty,1/2}\quad\forall
\mathbf{x}^d\in\RR^d\text{.}$$
\end{lem}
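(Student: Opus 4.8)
The statement splits into two uniform-in-$d$ bounds. The bound on $\mathcal{G}f$ is immediate from~\eqref{eq:GenU}: write $|\mathcal{G}f(x)|\le (h(l)/2)(|f''(x)|+|(\log\rho)'(x)|\,|f'(x)|)$, and note that $\log\rho\in\mathcal{S}^4$ forces $(\log\rho)'\in\mathcal{S}^0$, so $\|(\log\rho)'\|_{\infty,1/2}<\infty$. Bounding $|f''(x)|\le\|f''\|_{\infty,1/2}e^{|x|/2}$, $|f'(x)|\le\|f'\|_{\infty,1/2}e^{|x|/2}$ and $|(\log\rho)'(x)|\le\|(\log\rho)'\|_{\infty,1/2}e^{|x|/2}$ yields the claim, the two half-exponentials combining into $e^{|x|}$, with constant $(h(l)/2)(1+\|(\log\rho)'\|_{\infty,1/2})$.

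The real content is the bound on $\mathcal{G}_df$, where the naive estimate $|\mathcal{G}_df(\mathbf{x}^d)|\le d\,\mathbb{E}_{Y^d_1}[|f(Y^d_1)-f(x^d_1)|]$ is only $\O(\sqrt d)$ (a single RWM increment is $\O(1/\sqrt d)$, multiplied by the leading factor $d$), so I must expose the first-order cancellation that makes $\mathcal{G}_df$ genuinely $\O(1)$. Since $f$ depends only on the first coordinate, I condition on $Y^d_1$ and write $\mathcal{G}_df(\mathbf{x}^d)=d\,\mathbb{E}_{Y^d_1}[(f(Y^d_1)-f(x^d_1))\,g(\mathbf{x}^d,Y^d_1)]$, where $g(\mathbf{x}^d,y):=\mathbb{E}_{\mathbf{Y}^d-}[1\wedge(\rho_d(\mathbf{Y}^d)/\rho_d(\mathbf{x}^d))\mid Y^d_1=y]$ is the partially averaged acceptance probability. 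The two facts I need about $g$ — no differentiability required — are $0\le g(\mathbf{x}^d,\cdot)\le1$, and, since $a\mapsto 1\wedge e^a$ is $1$-Lipschitz and only the first-coordinate term of the log-acceptance ratio depends on $y$, the bound $|g(\mathbf{x}^d,y_1)-g(\mathbf{x}^d,y_2)|\le|\log\rho(y_1)-\log\rho(y_2)|=|(\log\rho)'(\zeta)|\,|y_1-y_2|$ for some $\zeta$ between $y_1$ and $y_2$ (mean value theorem applied to $\log\rho$).

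Then I decompose $\mathcal{G}_df(\mathbf{x}^d)=d\,\mathbb{E}_{Y^d_1}[(f(Y^d_1)-f(x^d_1))(g(\mathbf{x}^d,Y^d_1)-g(\mathbf{x}^d,x^d_1))]+d\,g(\mathbf{x}^d,x^d_1)(\mathbb{E}_{Y^d_1}[f(Y^d_1)]-f(x^d_1))$. In the second summand the linear Taylor term of $f$ integrates to zero because $Y^d_1-x^d_1$ is centred, leaving a second-order remainder which together with $0\le g\le1$ is $\O(e^{|x^d_1|}\|f''\|_{\infty,1/2}/d)$, hence $\O(e^{|x^d_1|}\|f''\|_{\infty,1/2})$ after multiplying by $d$. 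In the first summand the mean value theorem on $f$ and the Lipschitz bound on $g$ give an integrand bounded by $|f'(\tilde W)|\,|(\log\rho)'(\zeta)|(Y^d_1-x^d_1)^2$ with $\tilde W,\zeta$ within $|Y^d_1-x^d_1|$ of $x^d_1$; bounding $|f'(\tilde W)|\le\|f'\|_{\infty,1/2}e^{|x^d_1|/2}e^{|Y^d_1-x^d_1|/2}$ (and likewise $(\log\rho)'$) and absorbing the resulting $e^{|Y^d_1-x^d_1|}$ into a Gaussian moment (for $Z\sim N(0,l^2/d)$ one has $\mathbb{E}[e^{|Z|}Z^2]=\O(1/d)$ by Cauchy--Schwarz, since $\mathbb{E}[e^{2|Z|}]\le 2e^{2l^2}$ for $d\ge1$ and $\mathbb{E}[Z^4]=3l^4/d^2$) makes this summand $\O(e^{|x^d_1|}\|f'\|_{\infty,1/2})$ after the factor $d$. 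Every such Taylor-times-Gaussian-weight estimate is exactly what Proposition~\ref{prop:TaylorEstimate} provides, so I would invoke it rather than redo the moment computations; collecting terms and absorbing $h(l)$, $\|(\log\rho)'\|_{\infty,1/2}$ and the numerical factors into a single $C$ finishes the proof.

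The main obstacle is purely conceptual: one cannot bound $|\mathcal{G}_df|$ by putting absolute values on $f(Y^d_1)-f(x^d_1)$ against $\alpha\le1$, since that loses the $\sqrt d$. The point is to split off the part $g(\mathbf{x}^d,x^d_1)$ of the acceptance probability that does not depend on $Y^d_1$ and use $\mathbb{E}_{Y^d_1}[Y^d_1-x^d_1]=0$. A secondary technical nuisance, harmless in the end, is that $(\log\rho)'$ is unbounded off a compact set, so whenever it is evaluated at an intermediate point within $|Y^d_1-x^d_1|$ of $x^d_1$ one pays a factor $e^{|Y^d_1-x^d_1|}$, which is then swallowed by Gaussian moments using only $(\log\rho)'\in\mathcal{S}^0$.
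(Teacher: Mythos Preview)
Your proof is correct and follows essentially the same approach as the paper: both condition on $Y^d_1$ to isolate the partially averaged acceptance probability (your $g$ is the paper's $\tilde\beta$), use the $1$-Lipschitz property of $a\mapsto 1\wedge e^a$ to control its variation in the first coordinate, and then extract second-order Gaussian moments of size $\O(1/d)$. The only cosmetic difference is in how the first-order cancellation is packaged: the paper folds the Gaussian integral over $z>0$ (symmetrizing $x^d_1\pm z$) and then adds and subtracts $f'(w_2)\tilde\beta(\mathbf{x}^d,x^d_1+z)$, whereas you add and subtract the constant $g(\mathbf{x}^d,x^d_1)$ and invoke $\mathbb{E}[Y^d_1-x^d_1]=0$ directly---both devices encode the same cancellation and lead to the same two terms controlled by $\|f''\|_{\infty,1/2}$ and $\|f'\|_{\infty,1/2}\|(\log\rho)'\|_{\infty,1/2}$.
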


\begin{proof}
The triangle inequality, definition~\eqref{eq:GenU} and $\log(\rho)'(x)f'(x)\leq \|\log(\rho)'\|_{\infty,1/2}\|f'\|_{\infty,1/2}e^{|x|}$
imply the bound in the lemma for $|\mathcal{G}f(\mathbf{x}^d)|$.
To bound $|\mathcal{G}_df(\mathbf{x}^d)|$,
define
$$\tilde{\beta}(\mathbf{x}^d,y):=\mathbb{E}_{\mathbf{Y}^d-}\left[1\wedge
\exp\left(\log(\rho)(y)-\log(\rho)(x^d_1)+\sum_{i=2}^d\log(\rho)(Y^d_i)-\log(\rho)(x^d_i)\right)\right]$$
for any $y\in\RR$. 
Then, if $q$ denotes the density of $Y^d_1-x^d_1\sim N(0,l^2/d)$, we get 
\begin{multline}
\left|\mathcal{G}_df(\mathbf{x}^d)\right|=d\left| \mathbb{E}_{Y^d_1}\left[\left(f(Y^d_1)-f(x^d_1)\right)\tilde{\beta}(\mathbf{x}^d,Y^d_1)\right]\right|
\\
\leq d\int_0^{\infty}z\left|f'(w_1)\tilde{\beta}(\mathbf{x}^d,x^d_1+z)-f'(w_2)\tilde{\beta}(\mathbf{x}^d,x^d_1-z)\right|q(z)dz\text{,}\label{eq:auxGdBound}
\end{multline}
where $w_1\in(x_1^d,x_1^d+z)$ and $w_2\in(x_1^d-z,x_1^d)$ satisfy $zf'(w_1)=f(x_1^d+z)-f(x_1^d)$ and $-zf'(w_2)=f(x_1^d-z)-f(x_1^d)$, respectively.
Moreover, 
$\left|f'(w_1)-f'(w_2)\right|\leq 2z|f''(w_3)|$ 
holds for some $w_3$ in the interval $(x_1^d-z,x_1^d+z)$. Since $x\mapsto 1\wedge e^x$ is Lipschitz with constant $1$, we get 
$$\left|\tilde{\beta}(\mathbf{x}^d,x^d_1+z)-\tilde{\beta}(\mathbf{x}^d,x^d_1-z)\right|\leq \left|\log(\rho)(x_1^d+z)-\log(\rho)(x_1^d-z)\right|
\leq 2z|\log(\rho)'(w_4)|$$
for some $w_4\in (x_1^d-z,x_1^d+z)$.  
By adding and subtracting 
$f'(w_2)\tilde{\beta}(\mathbf{x}^d,x^d_1+z)$
on the right-hand side of~\eqref{eq:auxGdBound}, applying the two bounds we just derived and noting that $\tilde \beta\leq 1$, 
we get
\begin{eqnarray}
\label{eq:d_scaling_inequality}
\left|\mathcal{G}_df(\mathbf{x}^d)\right|&\leq& 2d\int_0^\infty z^2|f''(w_3)|q(z)dz+ 2d\int_0^\infty z^2|f'(w_2)\log(\rho)'(w_4)|q(z)dz.
\end{eqnarray}
Note that, 
since
$\max\{|w_3|,|w_2|,|w_4|\} \leq |x_1^d|+z$ and $\|f''\|_{\infty,1}\leq \|f''\|_{\infty,1/2}$,
we have
$$|f''(w_3)|\leq \|f''\|_{\infty,1} e^{|w_3|} \leq \|f''\|_{\infty,1/2} e^{|x_1^d|+z},\quad
\log(\rho)'(w_4)f'(w_2)\leq \|\log(\rho)'\|_{\infty,1/2}\|f'\|_{\infty,1/2}e^{|x_1^d|+z},$$
which, together with inequality~\eqref{eq:d_scaling_inequality}, implies the lemma. 
\end{proof}

\begin{proof}[Proof of Proposition~\ref{thm:Lp}]
By Theorem~\ref{thm:GdG} (on $\mathcal{A}_d$) and Lemma~\ref{lemma:AuxLp1} (on $\RR^d\setminus \mathcal{A}_d$), there exists a constant 
$C>0$ such that for any  $f\in\mathcal{S}^3$ the following inequality holds:
\begin{multline*}
\left\|\mathcal{G}_df-\mathcal{G}f\right\|^p_p=
\int_{\mathcal{A}_d}\left|\mathcal{G}_df(\mathbf{x}^d)-\mathcal{G}f(\mathbf{x}^d)\right|^p\rho_d(\mathbf{x}^d)d\mathbf{x}^d+
\int_{\RR^d\setminus \mathcal{A}_d}\left|\mathcal{G}_df(\mathbf{x}^d)-\mathcal{G}f(\mathbf{x}^d)\right|^p\rho_d(\mathbf{x}^d)d\mathbf{x}^d\\
\leq C\rho(e^{p|x|})\left(\frac{a_d^p}{d^{p/2}}\rho_d(\mathcal{A}_d)+
\rho_d(\RR^d\setminus \mathcal{A}_d)\right)\left(\sum_{i=1}^3\|f^{(i)}\|_{\infty,1/2}\right)^p\text{.}
\end{multline*}
Apply Proposition~\ref{prop:AdSizes} and raise both sides of the inequality  
to the power $1/p$ to conclude the proof of the proposition.
\end{proof}

\subsection{Proof of Theorem~\ref{thm:AVbound}}
\label{subsec:proofthm3}

\begin{lem}\label{lem:GE}
Assume that $\rho$ is a strictly positive density in 
$\mathcal{C}^1$ and that~\eqref{eq:SupperExponentialTails} holds. Then, for any $d\in\NN$, the RWM chain $\{\mathbf{X}^d_n\}_{n\in\NN}$ is
$V$-uniformly ergodic with $V:=1/\sqrt{\rho_d}$.  
\end{lem}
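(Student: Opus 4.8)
The plan is to verify a geometric drift condition of the standard Meyn--Tweedie type for the function $V=1/\sqrt{\rho_d}$, together with a minorisation condition on the sublevel sets of $V$, which by \cite[Thm.~16.0.1]{Handbook}-type results (or the Roberts--Rosenthal version for Metropolis chains) yields $V$-uniform ergodicity. Concretely, I would show that there exist $\lambda\in(0,1)$, $b<\infty$ and a ``small'' set $C$ such that
\begin{equation*}
P_dV(\mathbf{x}^d)\leq \lambda V(\mathbf{x}^d)+b\,\mathbf{1}_C(\mathbf{x}^d)\qquad\text{for all }\mathbf{x}^d\in\RR^d.
\end{equation*}
Since $\rho$ is strictly positive and continuous, every compact set is small for $P_d$ (the proposal density is positive and bounded below on compacts, and the acceptance probability is bounded below there), so the only real work is the drift inequality outside a large ball.

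The first step is to write out $P_dV(\mathbf{x}^d)/V(\mathbf{x}^d)$ explicitly. Splitting the proposal $\mathbf{Y}^d\sim N(\mathbf{x}^d,l^2/d\cdot I_d)$ into the accepted and rejected parts gives
\begin{equation*}
\frac{P_dV(\mathbf{x}^d)}{V(\mathbf{x}^d)}=\mathbb{E}_{\mathbf{Y}^d}\!\left[\alpha(\mathbf{x}^d,\mathbf{Y}^d)\sqrt{\tfrac{\rho_d(\mathbf{x}^d)}{\rho_d(\mathbf{Y}^d)}}\right]+\mathbb{E}_{\mathbf{Y}^d}\!\left[1-\alpha(\mathbf{x}^d,\mathbf{Y}^d)\right].
\end{equation*}
On the accepted part, $\alpha(\mathbf{x}^d,\mathbf{Y}^d)=1\wedge(\rho_d(\mathbf{Y}^d)/\rho_d(\mathbf{x}^d))$, so the integrand $\alpha\sqrt{\rho_d(\mathbf{x}^d)/\rho_d(\mathbf{Y}^d)}$ equals $\sqrt{\rho_d(\mathbf{Y}^d)/\rho_d(\mathbf{x}^d)}\wedge\sqrt{\rho_d(\mathbf{x}^d)/\rho_d(\mathbf{Y}^d)}\le 1$; the point is that it is strictly less than $1$ with overwhelming probability when $\mathbf{x}^d$ is large, because a Gaussian step of size $O(1/\sqrt d)$ in the direction where $\rho_d$ is increasing makes $\rho_d(\mathbf{Y}^d)/\rho_d(\mathbf{x}^d)$ bounded away from $1$. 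The rejected part contributes $\mathbb{P}(\text{reject})$, which must be shown to be bounded away from $1$ as $|\mathbf{x}^d|\to\infty$; this is exactly where the super-exponential tail condition~\eqref{eq:SupperExponentialTails} enters, since it forces the gradient $\nabla\log\rho_d$ to grow, so that a move ``downhill'' (towards the mode) is accepted with probability tending to $1$ while its reverse is rejected with high probability --- giving $\mathbb{P}(\text{reject}\mid\mathbf{x}^d)\to 1/2$ roughly, but more importantly the accepted part's contribution to $P_dV/V$ is then strictly below the rejection gain.

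The heart of the argument, and the main obstacle, is a careful one-dimensional estimate: for $\mathbf{x}^d$ outside a large compact set, at least one coordinate $x_i^d$ has $|\log(\rho)'(x_i^d)|$ large, and for that coordinate the expected value of $(\rho(Y_i^d)/\rho(x_i^d))^{1/2}\wedge(\rho(x_i^d)/\rho(Y_i^d))^{1/2}$ is bounded above by something like $e^{-c\,\sigma^2 (\log\rho)'(x_i^d)^2/2}$ plus lower-order terms, using a second-order Taylor expansion of $\log\rho$ around $x_i^d$ (the same expansion technology already used for $K$ in~\eqref{eq:Mdef}) and the Gaussian moment generating function; here $\sigma^2=l^2/d$. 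Combined with the product structure $\rho_d=\prod\rho$ and the independence of the coordinates of $\mathbf{Y}^d$, this pushes $P_dV(\mathbf{x}^d)/V(\mathbf{x}^d)$ below any prescribed $\lambda<1$ once $|\log(\rho)'(x_i^d)|$ is large enough, i.e.\ outside a compact set $C$ (which can be taken of the form $\{\,|\log\rho_d'|\le M\,\}$, compact by continuity of $\log\rho'$ and~\eqref{eq:SupperExponentialTails}). One subtlety to handle carefully is that~\eqref{eq:SupperExponentialTails} is a one-sided/coordinatewise statement, so the ``large'' coordinate need not be controlled uniformly over the sphere; the fix is to note that the drift only needs to hold pointwise, and for each $\mathbf{x}^d\notin C$ there is at least one coordinate with large gradient, whose contribution alone suffices to contract $V$ while the remaining coordinates contribute a factor $\le$ a fixed constant (by the $\mathbb{E}[(\rho(Y)/\rho(x))^{1/2}\wedge(\rho(x)/\rho(Y))^{1/2}]\le 1$ bound applied coordinatewise). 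Finally, having both the drift and the minorisation on the compact $C$, $V$-uniform ergodicity follows from the standard geometric ergodicity theorem.
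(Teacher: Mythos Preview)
Your overall strategy---establish a geometric drift $P_dV\le\lambda V+b\mathbf{1}_C$ for $V=\rho_d^{-1/2}$ together with smallness of compacts---is sound and is in fact what underlies the result you want. But the specific mechanism you propose for the drift has a genuine gap: the acceptance probability $\alpha(\mathbf{x}^d,\mathbf{Y}^d)=1\wedge\prod_{i=1}^d\rho(Y_i^d)/\rho(x_i^d)$ couples all coordinates through the minimum, so the expectation
\[
\frac{P_dV(\mathbf{x}^d)}{V(\mathbf{x}^d)}
=\mathbb{E}_{\mathbf{Y}^d}\Bigl[\,\alpha(\mathbf{x}^d,\mathbf{Y}^d)\sqrt{\rho_d(\mathbf{x}^d)/\rho_d(\mathbf{Y}^d)}\,\Bigr]
+\mathbb{E}_{\mathbf{Y}^d}\bigl[1-\alpha(\mathbf{x}^d,\mathbf{Y}^d)\bigr]
\]
does \emph{not} factor over coordinates, and your ``applied coordinatewise'' reduction to a single large-gradient coordinate does not go through as stated. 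In fact the integrand equals $g(R)$ with $R=\rho_d(\mathbf{Y}^d)/\rho_d(\mathbf{x}^d)$ and $g(r)=1/\sqrt{r}$ for $r\ge1$, $g(r)=1+\sqrt{r}-r$ for $r<1$; since $g$ is neither monotone nor multiplicative (and exceeds $1$ on $(0,1)$), isolating one coordinate while bounding the others by a constant $\le1$ does not control $\mathbb{E}[g(R)]$. What is actually needed is a \emph{joint} argument in $\RR^d$: one must exhibit a region of proposals, of probability bounded away from zero, on which $R\ge1$ and indeed $R\to\infty$ as $|\mathbf{x}^d|\to\infty$, so that the negative contribution from accepted uphill moves dominates the positive contribution from rejected downhill moves.

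The paper avoids redoing this analysis by invoking \cite[Thm.~4.1]{jarner}, which packages exactly that joint drift argument, and then verifies its two hypotheses for the product target $\rho_d$: (i) the $d$-dimensional super-exponential tail condition $\lim_{|\mathbf{x}^d|\to\infty}\tfrac{\mathbf{x}^d}{|\mathbf{x}^d|}\cdot\nabla\log\rho_d(\mathbf{x}^d)=-\infty$, which follows immediately from~\eqref{eq:SupperExponentialTails} since at least one coordinate diverges; and (ii) $\liminf_{|\mathbf{x}^d|\to\infty}\mathbb{P}_{\mathbf{Y}^d}[\rho_d(\mathbf{Y}^d)\ge\rho_d(\mathbf{x}^d)]>0$. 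The latter is the substantive step and is done by constructing an explicit box $B(\mathbf{x}^d)$ (each coordinate moved a fixed Gaussian amount towards the origin) which has uniformly positive proposal probability and on which $\rho_d(\mathbf{y}^d)\ge\rho_d(\mathbf{x}^d)$ for large $|\mathbf{x}^d|$. If you want to keep your direct-drift approach, the cleanest fix is to replace the coordinatewise claim by this same construction: show that on a set of $\mathbf{Y}^d$ of uniformly positive probability one has $R\to\infty$, and feed that into the formula for $P_dV/V$.
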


\begin{proof}
The lemma follows from \cite[Theorem~4.1]{jarner} if we prove that the target $\rho_d$ satisfies
\begin{equation}\label{eq:jarner1}
\lim_{|\mathbf{x}^d|\to\infty}\frac{\mathbf{x}^d}{|\mathbf{x}^d|}\cdot\nabla\log(\rho_d(\mathbf{x}^d))=
\lim_{|\mathbf{x}^d|\to\infty}\sum_{i=1}^d\frac{x^d_i}{|x^d_i|}\log(\rho)'(x^d_i)=-\infty,
\end{equation}
\begin{equation}\label{eq:jarner2}
\liminf_{|\mathbf{x}^d|\to\infty} \mathbb{P}_{\mathbf{Y}^d}\left[\rho_d(\mathbf{Y}^d)\geq \rho_d(\mathbf{x}^d)\right]>0\text{.}
\end{equation}
Assumption~\eqref{eq:SupperExponentialTails} implies that the expression $x/|x|\cdot \log(\rho)'(x)$ 
is bounded above and takes arbitrarily large negative values as $|x|\to\infty$.
This yields~\eqref{eq:jarner1}, since $|\mathbf{x}^d|\to\infty$ implies that $|x^d_i|\to\infty$ holds for at least one
$i\in\{1,\ldots,d\}$.

Condition~\eqref{eq:jarner2}
states that the acceptance probability in the RWM chain is bounded away from zero sufficiently far from the origin.
To prove this, 
recall that 
$\mathbf{Y}^d\sim N(\mathbf{x}^d,l^2/d\cdot I_d)$
and
define the set 
$$B(\mathbf{x}^d):=\left\{\mathbf{y}^d\in\RR^d\colon
\frac{x^d_i}{|x^d_i|}\cdot(y^d_i-x^d_i)\in
\left(\frac{-2l}{\sqrt{d}},\frac{-l}{\sqrt{d}}\right)\text{ for all }i\leq
d\right\}\text{,}$$ where we interpret $x^d_i/|x^d_i|:=1$ if $x^d_i=0$. Clearly
$\inf_{\mathbf{x}^d\in\RR^d}\mathbb{P}_{\mathbf{Y}^d}\left[B(\mathbf{x}^d)
\right]>0$. 
We now prove that if $|\mathbf{x}^d|$ is sufficiently large, then  
$\rho_d(\mathbf{y}^d)\geq\rho_d(\mathbf{x}^d)$ for all $\mathbf{y}^d\in B(\mathbf{x}^d)$,
which implies~\eqref{eq:jarner2}.

By~\eqref{eq:SupperExponentialTails}, far enough from zero,
$\rho$ is decreasing in a direction away from the origin. Therefore, there
exists a compact interval $K\subset \RR$ such that
$(-2l/\sqrt{d},2l/\sqrt{d})\subset K$ and $\rho(y)\geq \rho(x)$ whenever
$x\notin K$ and $x/|x|\cdot (y-x)\in (-2l/\sqrt{d},-l/\sqrt{d})$. 
We claim that 
for every $\mathbf{y}^d\in B(\mathbf{x}^d)$, 
the inequality 
$\rho(y^d_i)/\rho(x^d_i)\geq (\min_{x\in K}\rho(x))/(\max_{x\in\RR}\rho(x))\in(0,1)$
holds. 
If
$x^d_i\in K$, 
then
$y^d_i\in K$
and the inequality follows trivially. 
If
$x^d_i\notin K$,
then, by the definition of $K$, 
we have
$\rho(y^d_i)/\rho(x^d_i)\geq 1$. 
This proves the claim. 
Hence, for $\mathbf{y}^d\in B(\mathbf{x}^d)$ 
we have
\begin{equation}
\label{eq:good_estimate_rho_d}
\frac{\rho_d(\mathbf{y}^d)}{\rho_d(\mathbf{x}^d)}\geq
\left(\max_{i\leq d}\frac{\rho(y^d_i)}{\rho(x^d_i)}\right)\cdot\left(\frac{\min_{x\in
K}\rho(x)}{\max_{x\in\RR}\rho(x)}\right)^{d-1}\text{.}
\end{equation}

We now prove that the ratio
$\rho(y^d_i)/\rho(x^d_i)$ takes arbitrarily large values as $|x^d_i|\to\infty$. 
To show this,  
pick $\mathbf{y}^d\in B(\mathbf{x}^d)$ 
and assume the inequality $y_i^d>x_i^d$. 
Then  
$x_i^d<0$ and 
$y_i^d-x_i^d>l/\sqrt{d}$.
Moreover the following holds 
$$\frac{\rho(y^d_i)}{\rho(x^d_i)}=\exp\left(\log\left(\frac{\rho(y^d_i)}{\rho(x^d_i)}\right)\right)\geq
1+\int_{x^d_i}^{y^d_i}\log(\rho)'(z)dz\geq
1+l/\sqrt{d}\inf_{z<x^d_i+2l/\sqrt{d}}\log(\rho)'(z)\to\infty$$
as $x_i^d\to-\infty$ by~\eqref{eq:SupperExponentialTails}. 
This, together with~\eqref{eq:good_estimate_rho_d}, implies~\eqref{eq:jarner2}.  
The case $y_i^d<x_i^d$ is analogous and the lemma follows.
\end{proof}

\begin{prop}\label{prop:fhatinS}
If a strictly positive $\rho$ 
satisfies~\eqref{eq:SupperExponentialTails}
and
$\log(\rho)\in\mathcal{S}^{n_\rho}$ and $f\in\mathcal{S}^{n_f}$ for some
integers $n_\rho,n_f\in\NN\cup\{0\}$, then the function $\hat f$, defined in~\eqref{eq:solutions_Poisson_Eq_diff},
satisfies
$\hat{f}\in\mathcal{S}^{\min(n_f+2,n_\rho+1)}$.  
\end{prop}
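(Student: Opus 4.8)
The plan is to realise $\hat f$ and all its derivatives as combinations of functions lying in $\mathcal{S}^0$, using that $\mathcal{S}^0$ is closed under multiplication and under antidifferentiation: if $\phi,\psi\in\mathcal{S}^0$ and $s>0$ then $|\phi\psi|\leq\|\phi\|_{\infty,s/2}\,\|\psi\|_{\infty,s/2}\,e^{s|\cdot|}$, while $x\mapsto\int_0^x\phi(y)\,dy$ is continuously differentiable with absolute value at most $s^{-1}\|\phi\|_{\infty,s}e^{s|x|}$. Write $g:=\rho(f)-f$; since $f\in\mathcal{S}^{n_f}\subseteq\mathcal{S}^0$ and constants lie in $\mathcal{S}^0$, we have $g\in\mathcal{S}^0$, and $\rho(g)=\rho(f)\rho(1)-\rho(f)=0$. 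Put $F(y):=\int_{-\infty}^y\rho(z)g(z)\,dz$; this is finite because $\rho(e^{|x|})<\infty$ (cf.\ the proof of Proposition~\ref{prop:AdSizes}), is $\mathcal{C}^1$ with $F'=\rho g$, and by~\eqref{eq:solutions_Poisson_Eq_diff} satisfies $\hat f'=\tfrac{2}{h(l)}\,F/\rho$ and $\hat f(x)=\tfrac{2}{h(l)}\int_0^x(F/\rho)(y)\,dy$. Everything therefore reduces first to showing $F/\rho\in\mathcal{S}^0$.

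This tail estimate is the only real obstacle. Fix $s>0$ and $M>s$. By~\eqref{eq:SupperExponentialTails} there is $y_M>0$ such that $(\log\rho)'<-M$ on $(y_M,\infty)$ and $(\log\rho)'>M$ on $(-\infty,-y_M)$, so integrating $(\log\rho)'$ gives $\rho(z)\leq\rho(y)e^{-M(z-y)}$ whenever $z>y>y_M$ and $\rho(z)\leq\rho(y)e^{M(z-y)}$ whenever $z<y<-y_M$. For $y>y_M$, using $\rho(g)=0$ to rewrite $F(y)=-\int_y^\infty\rho g$,
\begin{equation*}
\left|\frac{F(y)}{\rho(y)}\right|\leq\int_y^\infty\frac{\rho(z)}{\rho(y)}\,|g(z)|\,dz\leq\|g\|_{\infty,s}\int_y^\infty e^{-M(z-y)}e^{sz}\,dz=\frac{\|g\|_{\infty,s}}{M-s}\,e^{sy};
\end{equation*}
the bound for $y<-y_M$ is symmetric, and $F/\rho$ is continuous hence bounded on $[-y_M,y_M]$. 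Thus $\|F/\rho\|_{\infty,s}<\infty$ for every $s>0$, i.e.\ $F/\rho\in\mathcal{S}^0$. The mechanism is precisely that $1/\rho$ grows faster than every exponential at $\pm\infty$ while $\rho(g)=0$ forces $\int_y^\infty\rho g$ to decay at the compensating super-exponential rate, and~\eqref{eq:SupperExponentialTails} is what makes this quantitative. It follows that $\hat f'=\tfrac{2}{h(l)}F/\rho\in\mathcal{S}^0$ and, antidifferentiating, $\hat f\in\mathcal{S}^0$; in particular $\hat f\in\mathcal{S}^1$, which already proves the claim when $n_\rho=0$. Henceforth assume $n_\rho\geq 1$.

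It remains to bootstrap smoothness from the Poisson equation. Since $F\in\mathcal{C}^1$ and $\rho=e^{\log\rho}\in\mathcal{C}^{n_\rho}\subseteq\mathcal{C}^1$, the identity $\hat f'=\tfrac{2}{h(l)}F/\rho$ shows $\hat f\in\mathcal{C}^2$, and~\eqref{eq:UfPE} reads $\tfrac{h(l)}{2}\big(\hat f''+(\log\rho)'\hat f'\big)=g$. Differentiating this identity $k$ times by the Leibniz rule (legitimate once $\hat f\in\mathcal{C}^{k+2}$) and isolating the top derivative gives, for $0\leq k\leq\min(n_f,n_\rho-1)$,
\begin{equation*}
\hat f^{(k+2)}=\frac{2}{h(l)}\,g^{(k)}-\sum_{j=0}^{k}\binom{k}{j}(\log\rho)^{(j+1)}\hat f^{(k+1-j)},
\end{equation*}
where $g^{(0)}=g$ and $g^{(k)}=-f^{(k)}$ for $k\geq1$. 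I would prove by induction on $k=0,\dots,\min(n_f,n_\rho-1)$ the statement ``$\hat f\in\mathcal{C}^{k+2}$ and $\hat f^{(j)}\in\mathcal{S}^0$ for all $0\leq j\leq k+2$''. Granting the inductive hypothesis at level $k$, the right-hand side above is a finite sum of products of functions in $\mathcal{S}^0$ — $g^{(k)}$ since $k\leq n_f$, each $(\log\rho)^{(j+1)}$ since $j+1\leq k+1\leq n_\rho$, and each $\hat f^{(k+1-j)}$ (indices between $1$ and $k+1$) by the hypothesis and the previous paragraph — so $\hat f^{(k+2)}\in\mathcal{S}^0$; moreover, when $k+1\leq\min(n_f,n_\rho-1)$ every factor on the right is in fact $\mathcal{C}^1$, hence the right-hand side is $\mathcal{C}^1$ and $\hat f\in\mathcal{C}^{k+3}$, after which the identity differentiated $k+1$ times supplies $\hat f^{(k+3)}\in\mathcal{S}^0$ by the same reasoning. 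At the end of the induction one has $\hat f\in\mathcal{C}^{\min(n_f,n_\rho-1)+2}=\mathcal{C}^{m}$ and $\hat f^{(j)}\in\mathcal{S}^0$ for all $0\leq j\leq m$, where $m=\min(n_f+2,n_\rho+1)$; that is, $\hat f\in\mathcal{S}^{m}$, as asserted. Beyond the tail estimate of the second paragraph, everything here is routine bookkeeping with the closure properties of $\mathcal{S}^0$.
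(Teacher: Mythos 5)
Your proof is correct, and its overall architecture matches the paper's: establish that $\hat f$ and $\hat f'$ lie in $\mathcal{S}^0$, then bootstrap higher derivatives by differentiating the Poisson equation \eqref{eq:UfPE} and inducting with the closure properties of $\mathcal{S}^0$ under sums and products. The identity you differentiate $k$ times is, after reindexing, the same one the paper uses. The genuine difference is in how the base estimate is obtained. The paper applies L'Hospital's rule twice to compute $\lim_{x\to\pm\infty}\hat f(x)e^{-s|x|}$ and $\lim_{x\to\pm\infty}\hat f'(x)e^{-s|x|}$, relying on~\eqref{eq:SupperExponentialTails} to force those limits to zero. You instead exploit $\rho(g)=0$ to write $F(y)=-\int_y^\infty\rho g$, integrate the differential inequality $(\log\rho)'<-M$ to get the pointwise tail bound $\rho(z)\le\rho(y)e^{-M(z-y)}$, and deduce $|F(y)/\rho(y)|\le\|g\|_{\infty,s}e^{sy}/(M-s)$ directly. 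Your route is more quantitative — it gives explicit constants and avoids the hypotheses bookkeeping of L'Hospital (monotonicity/limit existence of the denominator) — while the paper's is shorter on the page. Both isolate the same mechanism: super-exponential decay of $\rho$ together with $\rho(g)=0$ makes the integral tail decay fast enough to cancel $1/\rho$. One small economy in your write-up you could note explicitly: the base case $k=0$ of your induction needs $\hat f\in\mathcal{C}^2$, which indeed follows from $\hat f''=\tfrac{2}{h(l)}g-(\log\rho)'\hat f'$ once $n_\rho\ge1$ — you do say this, so no gap, just worth flagging as the place where the $n_\rho\ge1$ restriction enters.
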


\begin{proof}
Clearly, if $f\in\mathcal{C}^{n_f}$ and $\rho\in\mathcal{C}^{n_\rho}$ and if
$\rho$ is strictly positive, then
$\hat{f}\in\mathcal{C}^{\min(n_f+2,n_\rho+1)}$.  Pick $s>0$. 
The L'Hospital's rule implies:
$$\lim_{x\to\infty}\frac{\hat{f}(x)}{e^{s|x|}}=
\frac{2}{sh(l)}\lim_{x\to\infty}\frac{\int_{-\infty}^x\rho(y)(\rho(f)-f(y))dy}{e^{sx}\rho(x)}=\frac{2}{sh(l)}\lim_{x\to\infty}\frac{\rho(f)-f(x)}{se^{sx}+e^{sx}(\log(\rho))'(x)}\text{.}$$
The last limit is zero by~\eqref{eq:SupperExponentialTails}. An analogous argument shows 
$\lim_{x\to-\infty}\hat{f}(x)/e^{s|x|}=0$. 
Hence $\|\hat{f}\|_{\infty,s}<\infty$ holds for all $s>0$. 
Since $h(l)\hat{f}'(x)/2=\left(\int_{-\infty}^x\rho(y)(\rho(f)-f(y))dy\right)/\rho(x)$, 
this argument implies that $\|\hat{f}'\|_{\infty,s}<\infty$ holds for all $s>0$. 
Hence $\hat{f}\in \mathcal{S}^{1}$.

Proceed by induction: assume that for all $k\leq n$ (where $1\leq
n<\min(n_f+2,n_\rho+1)$) we have $\|\hat{f}^{(k)}\|_{\infty,s}<\infty$
for any $s>0$.
Pick an arbitrary $u>0$. 
By differentiating \eqref{eq:UfPE} we obtain
$$\hat{f}^{(n+1)}=-\sum_{k=0}^{n-1}\binom{n-1}{k}(\log(\rho))^{(k+1)}\hat{f}^{(n-k)}+\frac{2}{h(l)}(\rho(f)-f)^{(n-1)}\text{.}$$
Since $n\leq \min(n_\rho, n_f+1)$, the induction hypothesis implies
$\|\hat{f}^{(k)}\|_{\infty,u/2}<\infty$ for all $1\leq k\leq n$.
By assumption we have 
$\|f^{(n-1)}\|_{\infty,u}<\infty$ and 
$\|(\log(\rho))^{(k)}\|_{\infty,u/2}<\infty$
for all $1\leq k\leq n$.
Hence
$\|\hat{f}^{(n+1)}\|_{\infty,u}<\infty$ holds for an arbitrary $u>0$ and the proposition follows. 
\end{proof}

\begin{proof}[Proof of Theorem~\ref{thm:AVbound}]
By Lemma~\ref{lem:GE},
the RWM chain $\mathbf{X}^d$ with the transition kernel 
$P_d$ is $V$-uniformly ergodic with
$V=\rho_d^{-1/2}$. 
Moreover, by~\cite{RobertsRosenthal97}[Prop.~2.1 and Thm~2.1], $P_d$ defines
a self-adjoint operator on $\{g\in L^2(\rho_d)\colon \rho_d(g)=0\}$ with norm
$\lambda_d<1$.
Proposition~\ref{prop:fhatinS}
implies 
$\hat{f}\in\mathcal{S}^3$,
since by assumption we have
$f\in\mathcal{S}^1$ and $\log(\rho)\in\mathcal{S}^4$. 
By Remark~\ref{rema:SnProperties}(c) in Section~\ref{sec:Technical} below we have
$\hat{f}^2\in\mathcal{S}^3$.
Since $P_d\hat f = (1/d)\mathcal{G}_d\hat f+\hat f$, Lemma~\ref{lemma:AuxLp1} implies that
$(P_d\hat{f})^2(\mathbf{x}^d)\leq C_{\hat{f}} e^{2 |x_1^d|}$ for some positive
constant $C_{\hat{f}}$ and all $\mathbf{x}^d\in\RR^d$. 
Hence~\eqref{eq:SupperExponentialTails} and the definition of $V$
imply  the inequality 
$\max\{\hat{f}^2,(P_d\hat{f})^2\}\leq cV$ for some constant $c>0$. 
Consequently, by \cite[Theorem 17.0.1]{tweedie}, the CLT for the chain 
$\mathbf{X}^d$ and function $f+dP_d\hat{f}-d\hat{f}$ holds with some
asymptotic variance $\hat \sigma^2_{f,d}$.

By~\cite{KipnisVaradhan86,Geyer92} we can represent $\hat \sigma^2_{f,d}$ 
in terms of a positive spectral measure $E_d(d\lambda)$ associated with the
function
$f-\rho(f)+dP_d\hat{f}-d\hat{f}=\mathcal{G}_d\hat{f}-\mathcal{G}\hat{f}$ as
$$\hat \sigma^2_{f,d}=\int_{\Lambda_d}\frac{1+\lambda}{1-\lambda}E_d(d\lambda),$$
where $\Lambda_d\subset[-\lambda_d,\lambda_d]$ denotes the spectrum of the self-adjoint operator $P_d$ acting on the Hilbert space
$\{g\in L^2(\rho_d)\colon \rho_d(g)=0\}$. 
By the definition of the spectral measure $E_d(d\lambda)$ we obtain
We can bound
$$\hat \sigma^2_{f,d}\leq\frac{1+\lambda_d}{1-\lambda_d}\int_{\Lambda_d}E_d(d\lambda)=\frac{1+\lambda_d}{1-\lambda_d}
\|P_d(d\hat{f})-d\hat{f}+f-\rho(f)\|^2_2\leq
\frac{2}{1-\lambda_d}\|\mathcal{G}_d\hat{f}-\mathcal{G}\hat{f}\|^2_2\text{.}$$
Finally, the result follows by Proposition~\ref{cor:Lpnorm}. 
\end{proof}

\section{Technical results}\label{sec:Technical}
The results in Section~\ref{sec:Technical} use the ideas of Berry-Esseen theory and large deviations as well as the
optimal Young inequality, and do not depend on anything in this paper that precedes them. 

\subsection{Bounds on the expectations of test functions}\label{sec:TestFunBounds}
We start with elementary observations. 

\begin{rema}\label{rema:SnProperties}
Recall that $\mathcal{S}^n$, $n\in\NN\cup\{0\}$, is defined in~\eqref{eq:defSn}. The following statements hold.
\begin{enumerate}[(a)]
\item If $n\leq m$, then $\mathcal{S}^{m}\subset \mathcal{S}^n$.
\item For $n\in\NN$, $f\in\mathcal{S}^n$ if and only if $f'\in\mathcal{S}^{n-1}$.
\item If $f\in\mathcal{S}^n$ and $g\in\mathcal{S}^m$ then $f+g, fg\in\mathcal{S}^{\min(n,m)}$.
\end{enumerate}
\end{rema}

\begin{prop}\label{prop:TaylorEstimate} Pick an arbitrary $n\in\NN$. Assume $f\in\mathcal{S}^n$, $k\leq n$,
$x\in\RR$ and $Y\sim N(x,\sigma^2)$. 
Then there exists measurable $Z$ satisfying
$f^{(k)}(Z)(Y-x)^k/k!=f(Y)-\sum_{i=0}^{k-1}f^{(i)}(x)(Y-x)^i/i!$ and 
$|Z-x|<|Y-x|$. Furthermore  
there exists a constant $C>0$
(depending on $n$) such that,
for any $m\in\NN$ and $s>0$ we have
$$\mathbb{E}_{Y}\left[\left|f^{(k)}(Z)\right|^{m}\left|Y-x\right|^{n}\right]\leq
Ce^{s^2\sigma^2}\mathbb{E}_{Y}\left[\left|Y-x\right|^{n}\right]\|f^{(k)}\|_{\infty,s/m}^me^{s|x|}.$$
\end{prop}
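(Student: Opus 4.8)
The plan is to treat the two assertions separately. For the first, the point $Z$ is produced by Taylor's theorem with the Lagrange form of the remainder: since $f\in\mathcal{C}^n$ and $k\le n$, for each realization $y\ne x$ of $Y$ there is a point strictly between $x$ and $y$ at which $f^{(k)}$ takes the value
\[
v(y):=\frac{k!}{(y-x)^k}\Bigl(f(y)-\sum_{i=0}^{k-1}\frac{f^{(i)}(x)}{i!}(y-x)^i\Bigr),
\]
and on the (null) event $\{Y=x\}$ I would simply set $Z:=x$. To get measurability I would note that $y\mapsto v(y)$ is continuous (it extends continuously across $y=x$ with value $f^{(k)}(x)$), so $Z(y)$ can be taken to be, e.g., the smallest element of the closed set $\{z\in[\min(x,y),\max(x,y)]:f^{(k)}(z)=v(y)\}$, which depends measurably on $y$ by a routine measurable-selection argument. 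Since $|Z-x|<|Y-x|$ by construction, $|Z|\le|x|+|Y-x|$, and this is the only property of $Z$ that I will use in the second part.

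For the moment bound, fix $s>0$ and $m\in\NN$ and write $W:=Y-x\sim N(0,\sigma^2)$. From $|Z|\le|x|+|W|$ and the definition of $\|\cdot\|_{\infty,t}$ with $t=s/m$,
\[
|f^{(k)}(Z)|^m\le\|f^{(k)}\|_{\infty,s/m}^m\,e^{s|Z|}\le\|f^{(k)}\|_{\infty,s/m}^m\,e^{s|x|}\,e^{s|W|},
\]
so it remains to show $\mathbb{E}[e^{s|W|}|W|^n]\le C\,e^{s^2\sigma^2}\,\mathbb{E}[|W|^n]$ for a constant $C=C(n)$. I would bound $e^{s|W|}\le e^{sW}+e^{-sW}$ and use the symmetry of $W$ to reduce this to controlling $\mathbb{E}[e^{sW}|W|^n]$, and then complete the square in the Gaussian integral to get $\mathbb{E}[e^{sW}|W|^n]=e^{s^2\sigma^2/2}\,\mathbb{E}[|W+s\sigma^2|^n]$.

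Finally, by convexity $\mathbb{E}[|W+s\sigma^2|^n]\le 2^{n-1}\bigl(\mathbb{E}[|W|^n]+(s\sigma^2)^n\bigr)$; since $\mathbb{E}[|W|^n]=c_n\sigma^n$ (with $c_n$ the $n$-th absolute moment of $N(0,1)$) and $\sup_{u\ge0}u^n e^{-u^2/2}=:C_n'<\infty$, we have $(s\sigma^2)^n=\sigma^n(s\sigma)^n\le C_n'\sigma^n e^{s^2\sigma^2/2}$, hence $\mathbb{E}[|W+s\sigma^2|^n]\le 2^{n-1}(1+C_n'/c_n)\,e^{s^2\sigma^2/2}\,\mathbb{E}[|W|^n]$. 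Combining with the factor $e^{s^2\sigma^2/2}$ already present and the factor $2$ from the symmetrization gives $\mathbb{E}[e^{s|W|}|W|^n]\le 2^n(1+C_n'/c_n)\,e^{s^2\sigma^2}\,\mathbb{E}[|W|^n]$, and feeding this back into the previous display (using $\mathbb{E}[|W|^n]=\mathbb{E}_Y[|Y-x|^n]$) yields the proposition with $C:=2^n(1+C_n'/c_n)$. I expect the only slightly delicate step to be the measurable selection of $Z$; the rest is a direct Gaussian estimate whose one genuine trick is absorbing the polynomial factor $(s\sigma)^n$ into $e^{s^2\sigma^2/2}$ via $\sup_u u^n e^{-u^2/2}<\infty$ — which is exactly why the target bound carries the (seemingly wasteful) factor $e^{s^2\sigma^2}$ rather than $e^{s^2\sigma^2/2}$.
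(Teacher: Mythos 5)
Your proof is correct, but it follows a genuinely different route from the paper for the moment bound. The paper applies Cauchy--Schwarz to separate $\mathbb{E}_Y[|f^{(k)}(Z)|^m|Y-x|^n]$ into $\mathbb{E}_Y[|f^{(k)}(Z)|^{2m}]^{1/2}\mathbb{E}_Y[|Y-x|^{2n}]^{1/2}$, then uses the exact identity $\mathbb{E}_Y[|Y-x|^{2n}]^{1/2}=C\,\mathbb{E}_Y[|Y-x|^n]$ for Gaussian absolute moments (a ratio of Gamma functions) together with the simple MGF estimate $\mathbb{E}_Y[e^{2s|Z|}]\le 2e^{2s|x|}e^{2s^2\sigma^2}$; the factor $e^{s^2\sigma^2}$ emerges after the square root. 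You instead bound $|f^{(k)}(Z)|^m$ pointwise by $\|f^{(k)}\|_{\infty,s/m}^m e^{s|x|}e^{s|W|}$ and then attack $\mathbb{E}[e^{s|W|}|W|^n]$ directly by symmetrization, completing the square ($\mathbb{E}[e^{sW}|W|^n]=e^{s^2\sigma^2/2}\mathbb{E}[|W+s\sigma^2|^n]$), convexity of $t\mapsto t^n$, and absorption of $(s\sigma)^n$ into $e^{s^2\sigma^2/2}$ via $\sup_u u^n e^{-u^2/2}<\infty$. Both are correct; the paper's version is shorter and gives a cleaner constant, whereas yours avoids Cauchy--Schwarz entirely and makes explicit precisely how and why the exponent $e^{s^2\sigma^2}$ (rather than $e^{s^2\sigma^2/2}$) is needed. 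Your treatment of the measurability of $Z$ is also more explicit than the paper's, which simply invokes the integral form of Taylor's remainder; the measurable-selection argument you sketch (taking the smallest preimage in the compact interval, using continuity of $f^{(k)}$ and of $y\mapsto v(y)$) is the standard way to make this rigorous. The only cosmetic gap is that $|Z-x|<|Y-x|$ fails (trivially, with equality $0=0$) on the null event $\{Y=x\}$, which you and the paper both implicitly allow since only a.s.\ statements matter here.
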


\begin{proof}
A random variable $Z$, defined via the integral form of the remainder in Taylor's theorem, lies a.s. between $Y$ and $x$, implying 
$|Z-x|<|Y-x|$. 
Cauchy's inequality yields 
\begin{equation}
\label{eq:Simple_CS_bound_normal}
\mathbb{E}_{Y}\left[\left|f^{(k)}(Z)\right|^{m}\left|Y-x\right|^{n}\right]^2\leq
\mathbb{E}_{Y}\left[\left|f^{(k)}(Z)\right|^{2m}\right]\mathbb{E}_{Y}\left[\left|Y-x\right|^{2n}\right]\text{.}
\end{equation}
Since $f\in\mathcal{S}^n\subset\mathcal{S}^k$, we have
$\sup_{x\in\RR}\left|f^{(k)}(x)\right|^{2m}e^{-2s|x|}=\|f^{(k)}\|^{2m}_{\infty,s/m}<\infty$.
As $Y\sim N(x,\sigma^2)$, the equality 
$\mathbb{E}_{Y}\left[\left|Y-x\right|^{2n}\right]= C^2
\mathbb{E}_{Y}\left[\left|Y-x\right|^{n}\right]^2$
holds,
where
$C:=(2\sqrt{\pi}\Gamma((2n+1)/2))^{1/2}/\Gamma((n+1)/2)$ 
and
$\Gamma(\cdot)$ is the Euler gamma function. 
Hence, by~\eqref{eq:Simple_CS_bound_normal},
we get
$$\mathbb{E}_{Y}\left[\left|f^{(k)}(Z)\right|^{m}\left|Y-x\right|^{n}\right]\leq
\frac{C}{\sqrt{2}}\|f^{(k)}\|^{m}_{\infty,s/m}\sqrt{\mathbb{E}_{Y}\left[e^{2s|Z|}\right]}\mathbb{E}_{Y}\left[\left|Y-x\right|^{n}\right]\text{.}$$
It remains to
note 
$\mathbb{E}_{Y}e^{2s(|Z|-|x|)}\leq
\mathbb{E}_{Y}e^{2s|Z-x|}\leq
\mathbb{E}_{Y}e^{2s|Y-x|}\leq2 \mathbb{E}_{Y}e^{2s(Y-x)}= 2e^{2s^2\sigma^2}$.
\end{proof}

\begin{prop}\label{prop:BoundingSums}
Let $f\colon\RR\to\RR$ be a measurable (not necessary continuous)
function such that $\|f\|_{\infty,1/2}<\infty$. 
 Fix $n\in\NN$,
$\mathbf{x}^d\in\RR^d$ and let $X_1, X_2\dots, X_d$ be IID copies of $X$,
satisfying $\mathbb{E}\left[X^n\right]=0$ and
$\mathbb{E}\left[X^{2n}\right]<\infty$. Then the following inequality holds: 
$$\left|\mathbb{E}\left[\sum_{i=1}^df(x^d_i)X_i^n\right]\right|\leq
\|f\|_{\infty,1/2}
\left(\mathbb{E}[X^{2n}]\sum_{i=1}^de^{|x^d_i|}\right)^{1/2}\text{.}$$
\end{prop}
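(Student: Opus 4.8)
The plan is to dominate the expectation by an $L^2$-norm and then exploit that the cross terms vanish because the $X_i$ are centred at order $n$. First I would apply the triangle inequality and then the Cauchy--Schwarz inequality (equivalently, Jensen's inequality for $t\mapsto t^2$) to get
$$\left|\mathbb{E}\left[\sum_{i=1}^d f(x^d_i)X_i^n\right]\right|\leq \mathbb{E}\left[\left|\sum_{i=1}^d f(x^d_i)X_i^n\right|\right]\leq \left(\mathbb{E}\left[\Big(\sum_{i=1}^d f(x^d_i)X_i^n\Big)^2\right]\right)^{1/2}.$$
(The left-most quantity is in fact $0$, since $\mathbb{E}[X^n]=0$; it is the middle quantity that appears in the applications in Section~\ref{sec:proofs}, and the bound below applies verbatim to it.)

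Next I would expand the square, $\big(\sum_i f(x^d_i)X_i^n\big)^2=\sum_{i,j}f(x^d_i)f(x^d_j)X_i^nX_j^n$, and take expectations term by term. For $i\neq j$ the variables $X_i^n$ and $X_j^n$ are independent with mean $\mathbb{E}[X^n]=0$, so $\mathbb{E}[X_i^nX_j^n]=0$; this is the one place the centring hypothesis is used, and it is essential. Only the diagonal terms remain, each equal to $f(x^d_i)^2\mathbb{E}[X^{2n}]$, which is finite by assumption, so
$$\mathbb{E}\left[\Big(\sum_{i=1}^d f(x^d_i)X_i^n\Big)^2\right]=\mathbb{E}[X^{2n}]\sum_{i=1}^d f(x^d_i)^2.$$

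Finally I would use the definition of $\|\cdot\|_{\infty,1/2}$ in~\eqref{eq:defSn}: the inequality $|f(x)|\leq\|f\|_{\infty,1/2}e^{|x|/2}$ gives $f(x^d_i)^2\leq\|f\|^2_{\infty,1/2}e^{|x^d_i|}$, and substituting this into the previous display and taking square roots produces the claimed bound. I do not expect any real obstacle here: the statement is elementary and self-contained. The only two points deserving attention are that the off-diagonal terms vanish precisely by combining independence with $\mathbb{E}[X^n]=0$, and that the half-exponent in $\|\cdot\|_{\infty,1/2}$ is exactly what lets $f^2$ be controlled by the single exponential $e^{|x|}$ appearing on the right-hand side.
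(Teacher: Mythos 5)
Your proof matches the paper's argument step for step: Jensen (or Cauchy--Schwarz) to pass to an $L^2$-bound, expansion of the square with off-diagonal terms annihilated by independence and $\mathbb{E}[X^n]=0$, then the pointwise bound $f(x)^2\leq\|f\|_{\infty,1/2}^2 e^{|x|}$. You also correctly flag that the displayed left-hand side $\bigl|\mathbb{E}[\sum_i f(x^d_i)X_i^n]\bigr|$ is trivially zero under the hypotheses, and that the quantity actually needed (and used in Section~\ref{sec:proofs}) is $\mathbb{E}\bigl[\bigl|\sum_i f(x^d_i)X_i^n\bigr|\bigr]$, which your argument bounds verbatim --- the paper's own proof implicitly establishes exactly this via the intermediate $L^2$-estimate.
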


\begin{rema}
Note that the assumptions of Proposition~\ref{prop:BoundingSums} imply that, if $X$ is a non-zero random variable, then  $n\in\NN$
has to be odd. 
\end{rema}

\begin{proof}
By Jensen's inequality, the fact that $\mathbb{E}[X]=0$ and the assumption on $f$ we get 
$$\mathbb{E}\left[\sum_{i=1}^df(x^d_i)X_i^n\right]^2\leq
\mathbb{E}\left[\left(\sum_{i=1}^df(x^d_i)X_i^n\right)^2\right]=
\sum_{i=1}^d (f(x^d_i))^2\mathbb{E}\left[X_i^{2n}\right]\leq\|f\|^{2}_{\infty,1/2} \mathbb{E}[X^{2n}]\sum_{i=1}^de^{|x^d_i|}\text{.}$$
\end{proof}

\subsection{Deviations of the sums of IID random variables}\label{sec:EventsBounds}

\begin{prop}
\label{prop:Probabilities&Sets} Let $f\in \mathcal{S}^0$ be such
that $\rho(f)=0$ and let $a=\{a_d\}_{d\in\NN}$ be a sluggish sequence. If the random vector 
$(X_{1,d},\ldots, X_{d,d})$ follows the density $\rho_d$ for all $d\in\NN$, then for every $t>0$ the
following inequality
holds for all but finitely many $d\in\NN$:
$$ \mathbb{P}_{\rho_d}\left[\left|\frac{1}{d-1}\sum_{i=2}^df(X_{i,d})\right|\geq \frac{ta_d}{\sqrt{d}}\right]\leq \exp(-t^2a_d^2/(3\rho(f^2))).$$
\end{prop}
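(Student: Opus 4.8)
The plan is as follows. Set $v:=\rho(f^2)$; if $v=0$ then $f=0$ $\rho$-a.e.\ and both sides of the claimed inequality vanish, so assume $v>0$. Write $Y_i:=f(X_{i,d})$, so that $Y_2,\dots,Y_d$ are i.i.d.\ copies of $f(X)$, $X\sim\rho$, with mean $\rho(f)=0$ and variance $v$; by symmetry (apply the statement to $-f\in\mathcal{S}^0$, which has the same $v$) it suffices to bound the upper tail. Put $n:=d-1$, $S_n:=\sum_{i=2}^d Y_i$ and $u_d:=n\,ta_d/\sqrt d$, so the target becomes $\mathbb{P}[S_n\ge u_d]\le\tfrac12\exp\!\big(-t^2a_d^2/(3v)\big)$ for all large $d$. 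The first point is that $f\in\mathcal{S}^0$ together with~\eqref{eq:SupperExponentialTails} (which gives $\rho(e^{s|x|})<\infty$ for all $s>0$) yields $m_k:=\mathbb{E}[|Y_1|^k]\le\|f\|_{\infty,1/k}^k\,\rho(e^{|x|})<\infty$ for every $k\in\NN$, so $Y_1$ has all polynomial moments; however it need not have \emph{any} finite exponential moment, so a direct Chernoff/Cram\'er argument is unavailable and a truncation is required.

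Since $\{a_d\}$ is sluggish, fix $n_0\in\NN$ with $a_d\le\sqrt{n_0\log d}$; then $a_d/\sqrt d\to0$ and, crucially, $\exp(-t^2a_d^2/(3v))\ge d^{-t^2n_0/(3v)}$, i.e.\ the target bound is only polynomially small in $d$. I would truncate at $M_d:=\sqrt d/a_d^2$ and split $Y_i=Y_i'+Y_i''$ with $Y_i':=Y_i\mathbf 1_{\{|Y_i|\le M_d\}}$. On $\{\exists i\colon Y_i''\ne0\}$ the union bound gives $\mathbb{P}\le n\,m_k M_d^{-k}\le m_k n_0^k(\log d)^k d^{1-k/2}$ for any $k$, which is $\le\tfrac14\exp(-t^2a_d^2/(3v))$ for all large $d$ once $k=k(t)$ is chosen large enough. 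On the complement $S_n=\sum_i Y_i'$; after centering $\tilde Y_i:=Y_i'-\mu_d$, $\mu_d:=\mathbb{E}[Y_1']$, one has $|\mu_d|\le m_k M_d^{-(k-1)}$ (hence $n|\mu_d|\to0$), $v_d:=\mathbb{E}[\tilde Y_1^2]\to v$ as $M_d\to\infty$, and $|\tilde Y_i|\le 2M_d$ a.s.

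The final step is a Chernoff bound on $\tilde S_n:=\sum_{i=2}^d\tilde Y_i$ at the Gaussian-optimal value $\theta_d:=ta_d/(v_d\sqrt d)=u_d/(nv_d)$: since $\{\sum_i Y_i'\ge u_d\}=\{\tilde S_n\ge u_d-n\mu_d\}$ and $u_d-n\mu_d\ge u_d(1-o(1))$, the standard MGF bound behind Bennett's inequality for bounded mean-zero variables gives $\mathbb{E}[e^{\theta_d\tilde Y_i}]\le\exp\!\big(\tfrac12\theta_d^2v_d\,e^{2\theta_d M_d}\big)$, where $\theta_d M_d=t/(v_d a_d)\to0$ makes the exponential factor $1+o(1)$. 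Hence the Chernoff exponent collapses to $-u_d^2/(2nv_d)\cdot(1-o(1))=-\tfrac{t^2a_d^2}{2v}(1-o(1))$, using $n/d\to1$ and $v_d\to v$; for all large $d$ this is $\le-t^2a_d^2/(3v)-\log4$ because $a_d\to\infty$. Summing the two contributions (and the symmetric lower tail) gives the claimed inequality for all but finitely many $d$.

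The main obstacle is precisely that $f(X)$ carries no exponential moment, which rules out a bare Chernoff bound and forces a truncation; the delicate point is to choose $M_d$ simultaneously large enough that $n\,\mathbb{P}[|Y_1|>M_d]$ is dominated by the only-polynomially-small target $\exp(-t^2a_d^2/(3v))$, and small enough that $\theta_d M_d\to0$ so that Bennett's estimate on the truncated variables is asymptotically Gaussian. This dual requirement can be met exactly because sluggishness ties the deviation scale to $\sqrt{\log d}$, i.e.\ $n=d-1\asymp e^{\Theta(a_d^2)}$; and replacing the natural constant $1/(2v)$ by $1/(3v)$ furnishes the slack needed to absorb all the $o(1)$'s and universal constants for large $d$.
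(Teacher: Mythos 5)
Your proof is correct, but it takes a genuinely different and more elementary route than the paper's. The paper handles this via moderate deviations theory: it reduces the statement to Lemma~\ref{lem:LargeDev}, which invokes the moderate deviations principle of Eichelsbacher--L\"owe for $\{\sum_{i}f(X_{i,d})/(a_d\sqrt d)\}_d$, with speed $a_d^2$ and rate function $x\mapsto x^2/(2\rho(f^2))$. The only thing to check there is the tail condition $\limsup_d a_d^{-2}\log\bigl(d\cdot\mathbb{P}_\rho[|f(X)|\geq a_d\sqrt d]\bigr)=-\infty$, which follows exactly from $f\in\mathcal{S}^0$ and $\rho(e^{s|x|})<\infty$; the constant $3$ in place of the asymptotic $2$ absorbs the $\limsup$ and a reindexing from $d-1$ to $d$ terms. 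You instead prove the MDP upper bound from scratch: you correctly observe that $f(X)$ may carry no exponential moment (e.g.\ $f(x)=x^2$ with Gaussian $\rho$), so a bare Chernoff bound is unavailable, and you truncate at $M_d=\sqrt d/a_d^2$. This truncation level is the crux, and your dual requirement is exactly right: sluggishness makes $\exp(-t^2a_d^2/(3v))$ only polynomially small in $d$, so the union-bound term $n\,\mathbb{P}[|Y_1|>M_d]\lesssim (\log d)^k d^{1-k/2}$ can be killed by choosing the moment order $k$ large; while $\theta_d M_d=t/(v_d a_d)\to0$ makes the Bennett correction factor $e^{2\theta_d M_d}\to1$, so the exponent collapses to $-t^2a_d^2/(2v)(1-o(1))$, and the slack from $2\to3$ absorbs the $o(1)$, the small correction from $n\mu_d$, and the $\log 4$. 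I verified the MGF bound $\mathbb{E}[e^{\theta\tilde Y}]\leq\exp(\tfrac12\theta^2 v_d e^{2\theta M_d})$ and the bookkeeping on $n\mu_d$, $v_d\to v$, $n/d\to1$; they all go through. The trade-off: the paper's argument is shorter but relies on an imported MDP theorem; yours is longer but fully self-contained and, as a by-product, exposes quantitatively why the constant $1/(3v)$ rather than $1/(2v)$ is needed. You also make transparent the role of the upper bound in the definition of sluggishness, which in the paper's route is somewhat hidden inside the verification of the MDP tail condition.
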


\begin{rema}\label{remark:LargeDev}
Proposition~\ref{prop:Probabilities&Sets} is an elementary consequence
of a deeper underlying result, that the sequence of random variables
$\{\sum_{i=1}^df(X_{i,d})/(a_d\sqrt{d})\}_{d\in\NN}$ 
satisfies a moderate deviation principle with a good rate function $t\mapsto t^2/(2\rho(f^2))$ and speed $a_d^2$
(see~\cite{EichelsbacherLowe02} for details). The key inequality needed in the proof of 
Proposition~\ref{prop:Probabilities&Sets} is given in the next lemma. 
\end{rema}

\begin{lem}
\label{lem:LargeDev}
Let assumptions of Proposition~\ref{prop:Probabilities&Sets} hold. If $\rho(f^2)>0$, then for every closed $F\subseteq \RR$ 
the following holds:
$$\limsup_{d\to\infty}a^{-2}_d\log\mathbb{P}_{\rho_d}\left[\sum_{i=1}^df(X_{i,d})/(a_d\sqrt{d})\in
F\right]\leq -\inf\{ x^2/(2\rho(f^2));x\in F\}\text{.} $$
\end{lem}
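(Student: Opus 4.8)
The plan is to obtain the large--deviation upper bound from a Chernoff (exponential Markov) estimate combined with a polynomial--level truncation of $f$, after reducing an arbitrary closed $F\subseteq\RR$ to half--lines. Recall $X_{1,d},\dots,X_{d,d}$ are IID with law $\rho$. Write $\sigma^2:=\rho(f^2)$, which is finite since $|f(x)|\le\|f\|_{\infty,1/2}e^{|x|/2}$ and $\rho(e^{|x|})<\infty$ by~\eqref{eq:SupperExponentialTails}, and positive by hypothesis. As $x\mapsto x^2/(2\sigma^2)$ is even and strictly increasing in $|x|$, the general bound follows from the half--line bounds: if $0\in F$ the claim is trivial since $\inf_{x\in F}x^2/(2\sigma^2)=0$ and $a_d^{-2}\log\mathbb{P}_{\rho_d}[\,\cdot\,]\le0$; if $F\ne\emptyset$ and $0\notin F$, put $\delta_0:=\inf_{x\in F}|x|>0$, so $F\subseteq(-\infty,-\delta_0]\cup[\delta_0,\infty)$, $\inf_{x\in F}x^2/(2\sigma^2)=\delta_0^2/(2\sigma^2)$, and one uses $\limsup_d a_d^{-2}\log(p_d+q_d)=\max\{\limsup_d a_d^{-2}\log p_d,\limsup_d a_d^{-2}\log q_d\}$. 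Applying the half--line bound to $f$ and to $-f$ (for which $\rho(-f)=0$ and $\rho((-f)^2)=\sigma^2$) then completes the argument, so it suffices to prove, for each fixed $\delta>0$,
$$\limsup_{d\to\infty}a_d^{-2}\log\mathbb{P}_{\rho_d}\Big[\sum_{i=1}^d f(X_{i,d})\ge\delta a_d\sqrt d\Big]\le-\frac{\delta^2}{2\sigma^2}.$$

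For this, set $R_d:=d^{1/3}$ and $U_i:=f(X_{i,d})1_{\{|f(X_{i,d})|\le R_d\}}$, so the $U_i$ are IID and bounded by $R_d$. Since $|f(x)|>R$ implies $|x|>2\log(R/\|f\|_{\infty,1/2})$, and $\int_{\{|x|>T\}}\rho(x)dx\le e^{-sT}\rho(e^{s|x|})$ for every $s>0$, one gets $\rho(|f|>R)\le C_sR^{-2s}$ and, by the same splitting inside $\int_{\{|x|>T\}}e^{|x|/2}\rho(x)dx$, also $\rho(|f|\,1_{\{|f|>R\}})\le C_s'R^{-(2s-1)}$ for all $s>1/2$. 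Three consequences: (i) the bad event $E_d:=\{\exists\,i\le d:|f(X_{i,d})|>R_d\}$ has $\mathbb{P}_{\rho_d}[E_d]\le d\,\rho(|f|>R_d)\le C_s\,d^{1-2s/3}$, and since sluggishness gives $a_d^2\le C^2\log d$ (hence $a_d^{-2}\log d\ge C^{-2}$), letting $s\to\infty$ forces $\limsup_d a_d^{-2}\log\mathbb{P}_{\rho_d}[E_d]=-\infty$; (ii) the bias $\mu_d:=\mathbb{E}[U_1]=-\rho(f\,1_{\{|f|>R_d\}})$ satisfies $\sqrt d\,|\mu_d|\le C_s'\sqrt d\,d^{-(2s-1)/3}\to0$ (take $s=2$); (iii) $\sigma_d^2:=\mathrm{Var}(U_1)=\rho(f^2\,1_{\{|f|\le R_d\}})-\mu_d^2\to\sigma^2$ by dominated convergence ($f^2\in L^1(\rho)$). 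On $E_d^{\,c}$ we have $\sum_i f(X_{i,d})=\sum_i U_i$, so
$$\mathbb{P}_{\rho_d}\Big[\sum_{i=1}^d f(X_{i,d})\ge\delta a_d\sqrt d\Big]\le\mathbb{P}_{\rho_d}[E_d]+\mathbb{P}_{\rho_d}\Big[\sum_{i=1}^d U_i\ge\delta a_d\sqrt d\Big],$$
and by (i) only the second term matters for the $\limsup$.

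The second term is handled by Chernoff with $\lambda_d:=t a_d/\sqrt d$ ($t>0$ free): $\mathbb{P}_{\rho_d}[\sum_i U_i\ge\delta a_d\sqrt d]\le\exp(-t\delta a_d^2)\,\mathbb{E}[e^{\lambda_d U_1}]^d$. Writing $W:=U_1-\mu_d$ (mean zero, $|W|\le 2R_d$ for large $d$, $\mathbb{E}[W^2]=\sigma_d^2$) and using the elementary bound $\mathbb{E}[e^{\lambda W}]\le\exp\big(\tfrac12\lambda^2\mathbb{E}[W^2]\,g(2\lambda R_d)\big)$ with $g(u):=2(e^u-1-u)/u^2$ continuous and $g(0)=1$, we get $d\log\mathbb{E}[e^{\lambda_d U_1}]\le t a_d\sqrt d\,\mu_d+\tfrac12 t^2 a_d^2\sigma_d^2\,g(2\lambda_d R_d)$. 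Since $2\lambda_d R_d=2t\,a_d\,d^{-1/6}\to0$ (as $a_d=O(\sqrt{\log d})$ by sluggishness), $g(2\lambda_d R_d)\to1$; together with (ii)--(iii) this yields $a_d^{-2}\log\mathbb{P}_{\rho_d}[\sum_i U_i\ge\delta a_d\sqrt d]\le-t\delta+\tfrac12 t^2\sigma^2+o(1)$. Passing to the limit and then minimising over $t$ (at $t=\delta/\sigma^2$) gives the value $-\delta^2/(2\sigma^2)$, which with (i) completes the half--line bound and hence the lemma.

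I expect the main obstacle to be precisely what necessitates the truncation: a function $f\in\mathcal{S}^0$ may grow sub--exponentially yet fast enough that $\rho(e^{\lambda f})=+\infty$ for every $\lambda>0$ — this can be arranged against a density whose $\log\rho$ behaves like $-e^{\sqrt{|x|}}$ away from the origin — so no direct G\"{a}rtner--Ellis / exponential--moment argument is available. The truncation level $R_d$ must then reconcile two opposing requirements: it must be large enough that the discarded mass $\mathbb{P}_{\rho_d}[E_d]$ is super--exponentially small on the scale $a_d^2$ and the bias $\sqrt d\,\mu_d$ vanishes, and small enough that $\lambda_d R_d\to0$ so that the Bennett--type factor $g(2\lambda_d R_d)$ tends to $1$ and the Gaussian constant $\tfrac12 t^2\sigma^2$ is not inflated; any $R_d=d^{\alpha}$ with $0<\alpha<1/2$ works because $a_d=O(\sqrt{\log d})$, and $\alpha=1/3$ is convenient. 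Everything else — the half--line reduction, the tail bounds for $\rho(|f|>R)$, the two convergence statements, and the exponential--moment bound for bounded variables — is routine.
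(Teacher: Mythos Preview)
Your proof is correct and genuinely different from the paper's. The paper does not argue from scratch: it invokes the moderate deviation principle of Eichelsbacher--L\"owe~\cite[Thm~2.2, Lem.~2.5, Rem.~2.6]{EichelsbacherLowe02} and reduces the lemma to verifying the tail condition
\[
\limsup_{d\to\infty} a_d^{-2}\log\bigl(d\cdot \mathbb{P}_{\rho}[\,|f(X)|\ge a_d\sqrt{d}\,]\bigr)=-\infty,
\]
which it checks via $|f(x)|\le\|f\|_{\infty,1/m}e^{|x|/m}$ and Markov's inequality, exactly as you do in step~(i). Your argument, by contrast, is self-contained: the half-line reduction, the polynomial truncation at $R_d=d^{1/3}$, and the Chernoff/Bennett estimate on the bounded variables $U_i$ reproduce in elementary terms precisely the mechanism behind the cited MDP. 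Both proofs rest on the same tail estimate for $\{|f(X)|>R\}$, and your discussion of why a direct exponential-moment approach can fail (possible $\rho(e^{\lambda f})=+\infty$ for all $\lambda>0$) is the same obstruction that the sufficient condition in~\cite{EichelsbacherLowe02} is designed to circumvent. The paper's route is shorter to write; yours is more transparent about the role of sluggishness---you see explicitly that $a_d=O(\sqrt{\log d})$ is what makes both $\lambda_d R_d\to0$ and $a_d^{-2}\log\mathbb{P}[E_d]\to-\infty$ work simultaneously.
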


\begin{proof} The moderate deviations results~\cite[Thm~2.2, Lem.~2.5, Rem.~2.6]{EichelsbacherLowe02}  
yield a sufficient condition for the above inequality.  
More precisely, for $X\sim \rho$, 
we need to establish: 
\begin{equation}\label{eq:LDAuxilliaryCondition}
\limsup_{d\to\infty} a_d^{-2}\log \left(d\cdot \mathbb{P}_{\rho}\left[|f(X)|\geq a_d\sqrt{d}\right]\right)=-\infty\text{.}
\end{equation}
Fix an arbitrary $m\in\NN$. Since $f\in\mathcal{S}^0$, we have $|f(x)|\leq
\|f\|_{\infty,1/m} e^{|x|/m}$ for every $x\in\RR$. 
Consequently, for all large $d$, we get 
$$\mathbb{P}_{\rho}\left[|f(X)|\geq a_d\sqrt{d}\right]\leq
\mathbb{P}_{\rho}\left[\|f\|_{\infty,1/m}^m e^{|X|}\geq d^{m/2}\right]\leq
\|f\|_{\infty,1/m}^m \rho\left(e^{|X|}\right)d^{-m/2}.$$
Since $\{a_d\}_{d\in\NN}$ is sluggish,
$\exists C_0>0$ such that
$a_d^{-2}\log(\|f\|_{\infty,1/m}^m \rho(e^{|X|}))<C_0< a_d^{-2}\log(d)$ for all large $d\in\NN$.
Hence
$$a_d^{-2}\log (d\cdot\mathbb{P}_{\rho}[|f(X)|\geq a_d\sqrt{d}])\leq
a_d^{-2}(\log(\|f\|_{\infty,1/m}^m \rho(e^{|X|}))-(m/2-1)\log(d))<
-C_0(m/2-2),$$
for all large $d\in\NN$.
Since $m$ was arbitrary,~\eqref{eq:LDAuxilliaryCondition} follows. 
\end{proof}

\begin{proof}[Proof of Proposition~\ref{prop:Probabilities&Sets}]
Note that the proposition holds if $\rho(f^2)=0$. Assume now $\rho(f^2)>0$
and fix an arbitrary $t>0$. Note that since $\{a_d\}_{d\in\NN}$ is sluggish, 
so is $\{a'_d\}_{d\in\NN}$, $a'_d:=a_{d+1}\sqrt{d/(d+1)}$. 
Apply Lemma~\ref{lem:LargeDev} to
$F=\RR\setminus (-t,t)$ and 
$\{a'_d\}_{d\in\NN}$
to get the following inequality 
\begin{equation}
\label{eq:From_LDP}
\mathbb{P}_{\rho_{d-1}}\left[\left|\sum_{i=1}^{d-1}f(X_{i,d-1})/(a'_{d-1}\sqrt{d-1})\right|\geq
t\right]\leq \exp\left(-3 (a'_{d-1})^2t^2/(8\rho(f^2))\right)
\end{equation}
for all large enough $d\in\NN$. 
Since
$3(a'_{d-1})^2/4\geq 2a^2_d/3$ 
for all but finitely many $d\in\NN$,
the right-hand side in~\eqref{eq:From_LDP} is bounded above by $\exp(-(a_d)^2t^2/(3\rho(f^2)))$. 
Recall
$\rho_d(\mathbf{x}^d)=\rho_{d-1}(\mathbf{x}^{d-1})\rho(x_d^d)$
and $a_{d-1}'\sqrt{d-1}=a_d(d-1)/\sqrt{d}$.
Hence the left-hand side in inequality~\eqref{eq:From_LDP} equals
$\mathbb{P}_{\rho_d}[|\sum_{i=2}^{d}f(X_{i,d})/(d-1)|\geq ta_d/\sqrt{d}]$
and the proposition follows. 
\end{proof}

The next result is based on a combinatorial argument. A special case 
of Proposition~\ref{prop:AdSizesAux} was used in~\cite{RobertsGelmanGilks97}.

\begin{prop}\label{prop:AdSizesAux}
Let $n\in\NN$ and a measurable $f\colon\RR\to\RR$ satisfy $\rho(f)=0$ and
$\rho(f^{2n})< \infty$. 
If the random vector $(X_{1,d},\ldots,X_{d,d})$ is distributed according to $\rho_d$,
then there exists a constant $C$, independent of $d$, such that
$\mathbb{P}_{\rho_d}\left[\left|\frac{1}{d-1}\sum_{i=2}^df(X_{i,d})\right|\geq
1\right]\leq C d^{-n}$.  
\end{prop}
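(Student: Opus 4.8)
The plan is to bound the probability by a $2n$-th moment via Markov's inequality and then control that moment by a combinatorial expansion. Set $Y_i:=f(X_{i,d})$ for $2\le i\le d$; these are i.i.d.\ with $\mathbb{E}[Y_i]=\rho(f)=0$ and $\mathbb{E}[Y_i^{2n}]=\rho(f^{2n})<\infty$. For $d=1$ the sum $\sum_{i=2}^d$ is empty, so the probability is $0$; hence I may assume $d\ge 2$. Markov's inequality applied to $\big(\tfrac{1}{d-1}\sum_{i=2}^d Y_i\big)^{2n}$ gives
$$\mathbb{P}_{\rho_d}\left[\Big|\frac{1}{d-1}\sum_{i=2}^d Y_i\Big|\ge 1\right]\le\frac{1}{(d-1)^{2n}}\,\mathbb{E}\left[\Big(\sum_{i=2}^d Y_i\Big)^{2n}\right],$$
so it suffices to prove $\mathbb{E}\big[(\sum_{i=2}^d Y_i)^{2n}\big]\le B_n\,\rho(f^{2n})\,(d-1)^n$ for some constant $B_n$ depending only on $n$.

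The key step is the multinomial expansion $\mathbb{E}\big[(\sum_{i=2}^d Y_i)^{2n}\big]=\sum_{(i_1,\dots,i_{2n})\in\{2,\dots,d\}^{2n}}\mathbb{E}[Y_{i_1}\cdots Y_{i_{2n}}]$. By independence and $\mathbb{E}[Y_i]=0$, a summand is nonzero only if every index occurring among $i_1,\dots,i_{2n}$ occurs at least twice; in particular such a summand involves at most $n$ distinct indices. Grouping the nonzero summands by the set partition $\pi$ of $\{1,\dots,2n\}$ recording which positions carry equal indices, each block of $\pi$ has size $\ge 2$, so $\pi$ has at most $n$ blocks, and the number of admissible $\pi$ is a finite constant $B_n$ (bounded by the Bell number $\mathrm{B}_{2n}$). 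For a fixed $\pi$ with block sizes $m_1,\dots,m_k$ (so $k\le n$, $m_j\ge 2$, $\sum_j m_j=2n$) there are at most $(d-1)^k\le (d-1)^n$ ways to assign distinct indices in $\{2,\dots,d\}$ to the blocks, and each resulting summand equals $\prod_{j=1}^k\mathbb{E}[Y^{m_j}]$, writing $Y$ for a generic copy of the $Y_i$. Since $2n$ is even, $f^{2n}=|f|^{2n}$, and Jensen's inequality (monotonicity of $L^p$-norms) gives $\mathbb{E}[|Y|^{m_j}]\le\mathbb{E}[|Y|^{2n}]^{m_j/(2n)}$ because $m_j\le 2n$; multiplying over $j$, $\prod_{j=1}^k|\mathbb{E}[Y^{m_j}]|\le\mathbb{E}[|Y|^{2n}]^{\sum_j m_j/(2n)}=\rho(f^{2n})$. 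Combining these bounds yields $\mathbb{E}\big[(\sum_{i=2}^d Y_i)^{2n}\big]\le B_n\,\rho(f^{2n})\,(d-1)^n$.

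Plugging this into the Markov estimate gives $\mathbb{P}_{\rho_d}\big[|\tfrac{1}{d-1}\sum_{i=2}^d Y_i|\ge 1\big]\le B_n\,\rho(f^{2n})\,(d-1)^{-n}\le 2^n B_n\,\rho(f^{2n})\,d^{-n}$ for $d\ge 2$, using $d-1\ge d/2$; together with the trivial case $d=1$ this proves the claim with $C:=2^n B_n\,\rho(f^{2n})$. The only mildly delicate point is the combinatorial bookkeeping in the middle paragraph — namely that nonzero terms involve at most $n$ distinct indices and that the number of admissible partitions is an $n$-dependent constant — but this is routine, and the special case $f=\mathrm{id}$ of exactly this estimate already appears in~\cite{RobertsGelmanGilks97}.
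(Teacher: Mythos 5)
Your proof is correct and follows essentially the same route as the paper's: Markov's inequality with the $2n$-th power, a multinomial expansion where independence and $\rho(f)=0$ kill any term with a singleton index, Jensen to bound the surviving moment products by $\rho(f^{2n})$, and a combinatorial count showing the number of surviving terms is $O(d^n)$. The only cosmetic difference is in the last step: you organize nonzero terms by set partitions of $\{1,\dots,2n\}$ with all blocks of size $\ge 2$ and bound by a Bell number, whereas the paper indexes by multi-indices $(k_2,\dots,k_d)$ with no entry equal to $1$ and counts them with a rounding map $\zeta$ and a binomial-coefficient sum; both give the same $O(d^n)$ bound.
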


\begin{rema}
The constant $C$ in Proposition~\ref{prop:AdSizesAux} may depend on $n\in\NN$ and the function $f$.
\end{rema}

\begin{proof}
Fix $n\in\NN$
and let $\NN_0:=\NN\cup\{0\}$.
Markov's inequality and the Multinomial theorem yield: 
\begin{eqnarray}
&&\mathbb{P}_{\rho_d}\left[\left|\frac{1}{d-1}\sum_{i=2}^df(X_{i,d})\right|\geq
1\right]=\mathbb{P}_{\rho_d}\left[\left|\frac{1}{d-1}\sum_{i=2}^d
f(X_{i,d})\right|^{2n}\geq 1\right]\leq
\mathbb{E}_{\rho_d}\left(\frac{1}{d-1}\sum_{i=2}^d
f(X_{i,d})\right)^{2n}\nonumber\\
&&=(d-1)^{-2n}\sum_{\substack{k_2+k_3+\cdots
+k_d=2n\\k_2,k_3\dots,k_d\in\NN_0\setminus
\{1\}}}\binom{2n}{k_2,k_3,\dots,k_d}\prod_{i=2}^d\mathbb{E}_{\rho}\left[f(X_{i,d})^{k_i}\right]\nonumber\text{,}
\end{eqnarray}
where last equality holds, because the expectation of any summand of the form
$\prod_{i=2}^{d}f(X_{i,d})^{k_i}$ is zero if any of the indices $k_i=1$ since $\rho_d$ has a product structure and
$\rho(f)=0$.  
By Jensen's inequality, 
$\prod_{i=2}^d\mathbb{E}_{\rho}\left[f(X_{i,d})^{k_i}\right]\leq
\prod_{i=2}^d\mathbb{E}_{\rho}\left[f(X_{i,d})^{2n}\right]^{k_i/2n}=\rho(f^{2n})
^{\sum_{i=2}^{d}\frac{k_i}{2n}}=\rho(f^{2n})$, and hence  
\begin{equation}
\label{eq:Only_comb_arg_needed}
\mathbb{P}_{\rho_d}\left[\left|\frac{1}{d-1}\sum_{i=2}^df(X_{i,d})\right|\geq
1\right]\leq (2n)!\cdot\rho(f^{2n})(d-1)^{-2n}\cdot
\left|\mathcal{N}_d\right|\text{,}
\end{equation}
where $\left|\mathcal{N}_d\right|$ stands
for the cardinality of the set 
$$\mathcal{N}_d:=\left\{(k_2,k_3,\dots, k_d)\in\NN^{d-1}_0;\quad\sum_{i=2}^dk_d=2n\text{ and } k_i\neq 1\text{ for all 
} 2\leq i\leq d\right\}\text{.}$$
Inequality~\eqref{eq:Only_comb_arg_needed} and the next Claim prove the proposition. 

\noindent \textbf{Claim.} 
$\left|\mathcal{N}_d\right|\leq C' d^n$ for a constant $C'$ independent of $d$. 

\noindent \textit{Proof of Claim.}
Consider a function $\zeta\colon \NN^{d-1}_0\to\NN^{d-1}_0$, 
$\zeta(a_2,a_3,\dots, a_d):=(2\lfloor\frac{a_2}{2}\rfloor,2\lfloor\frac{a_3}{2}\rfloor,\dots
2\lfloor\frac{a_d}{2}\rfloor )$, that rounds each entry down to the
nearest even number. Every element in the image $\zeta(\mathcal{N}_d)$ is a
$(d-1)$-tuple of non-negative even integers with sum at most $2n$.
Recall the number of $k$-combinations with repetition, chosen from a set of
$d-1$ objects, equals $\binom{k+d-2}{k}$. There exists $C''>0$, such that  
\begin{eqnarray}
\left|\zeta(\mathcal{N}_d)\right|&\leq& \left|\left\{(k_2,k_3,\dots, k_d)\in\NN^{d-1}_0; \quad\sum_{i=2}^dk_d\leq n\right\}\right|\nonumber\\
&=&\sum_{k=0}^n\left|\left\{(k_2,k_3,\dots, k_d)\in\NN^{d-1}_0;\quad\sum_{i=2}^dk_d=k\right\}\right|=\sum_{k=0}^n\binom{k+d-2}{k}\leq C'' d^n\text{.}\nonumber
\end{eqnarray}

Note that the pre-image of a singleton under $\zeta$ contains at most $2^n$ elements (i.e. $(d-1)$-tuples) of $\mathcal{N}_d$. 
Indeed, by the definition of
$\mathcal{N}_d$, at most $n$ coordinates of an element are not zero and each can
either reduce by one or stay the same. Hence, for $C':=C'' 2^n$, we have
$\left|\mathcal{N}_d\right|\leq 2^n\cdot\left|\zeta(\mathcal{N}_d)\right|\leq C'd^n$. 
\end{proof}

\subsection{Bounds on the densities of certain random variables}\label{sec:PDFBOunds}

The key step in the proof of Proposition~\ref{prop:SumPolynomialPDFboun} below is 
the optimal Young's inequality:
for $p,q\geq 1$ and $r\in[1,\infty]$, such that  $1/p+1/q=1+1/r$, 
and functions $f\in L^p(\RR)$ and $g\in L^q(\RR)$, their convolution  $f*g$
satisfies the inequality
\begin{equation}\label{eq:OptimalYoung}
\|f*g\|_r\leq \frac{C_pC_q}{C_r}\|f\|_p\|g\|_q\text{,}\quad\text{where}\quad 
C_s:=\begin{cases}
\sqrt{\frac{s^{1/s}}{s'^{1/s'}}},
\text{ if $s\in(1,\infty)$ and $1/s+1/s'=1$,}\\
1\text{, if $s\in\{1,\infty\}$.}
\end{cases}
\end{equation}
For $s\in[1,\infty)$,
$\|\cdot\|_s$ is the usual norm on $L^s(\RR)$ and $\|\cdot\|_\infty$
denotes the essential supremum norm on $L^\infty(\RR)$. 
The proof of~\eqref{eq:OptimalYoung} for $r<\infty$ is given in~\cite[Thm~1]{Barthe98}.
In the case $r=\infty$, we have $C_pC_q/C_r=1$ and the inequality in~\eqref{eq:OptimalYoung}
follows from the definition of the convolution, translation invariance of the Lebesgue measure 
and H\"older's inequality. 

\begin{prop}\label{prop:SumPolynomialPDFboun} 
Let $X_1,X_2,\dots, X_d$ be
independent random variables, each $X_i$ with a bounded density $q_i$. The density
$Q_d$ of the sum $\sum_{i=1}^d X_i$ satisfies
$\|Q_d\|_\infty\leq c  \max_{i\leq d}\|q_i\|_\infty/\sqrt{d}$
for some constant $c>0$.
\end{prop}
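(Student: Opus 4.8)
The plan is to normalise against $M:=\max_{i\le d}\|q_i\|_\infty$, dispose of the trivial case $d=1$ (where $\|Q_1\|_\infty=\|q_1\|_\infty\le M$), and for $d\ge 2$ apply the optimal Young inequality~\eqref{eq:OptimalYoung} iteratively along $Q_d=q_1\ast q_2\ast\cdots\ast q_d$, attaching to \emph{every} factor $q_i$ the single exponent $s:=d/(d-1)$, whose conjugate is $s'=d$. The first ingredient is the interpolation bound $\|q_i\|_s^s=\int q_i^{s-1}q_i\le M^{s-1}$, which gives $\|q_i\|_s\le M^{1-1/s}=M^{1/d}$ and hence $\prod_{i=1}^d\|q_i\|_s\le M$; in particular each $q_i\in L^s(\RR)$.

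Next I set $g_1:=q_1$, $g_k:=g_{k-1}\ast q_k$ for $2\le k\le d$ (so $g_d=Q_d$) and $r_k:=d/(d-k)$, i.e.\ $1/r_k=1-k/d$. Then $r_1=s$, $r_d=\infty$, $r_k\in(1,\infty]$ for all $k$, and $1/r_{k-1}+1/s=1+1/r_k$, so~\eqref{eq:OptimalYoung} applies at the $k$-th step with the triple $(r_{k-1},s,r_k)$, giving $\|g_k\|_{r_k}\le (C_{r_{k-1}}C_s/C_{r_k})\,\|g_{k-1}\|_{r_{k-1}}\|q_k\|_s$. Multiplying these $d-1$ inequalities, the ratios $C_{r_{k-1}}/C_{r_k}$ telescope to $C_{r_1}/C_{r_d}=C_s/C_\infty=C_s$, while the remaining factors contribute $C_s^{d-1}$, so
\[
\|Q_d\|_\infty\le C_s^{d}\prod_{i=1}^d\|q_i\|_s\le C_s^{d}\,M.
\]
It remains to bound $C_s^{d}$: since $d/s=d-1$ and $d/s'=1$, the definition of $C_s$ gives the exact identity $C_s^{2d}=s^{d/s}/(s')^{d/s'}=\bigl(d/(d-1)\bigr)^{d-1}/d=\tfrac1d\bigl(1+\tfrac1{d-1}\bigr)^{d-1}\le e/d$, whence $\|Q_d\|_\infty\le\sqrt{e/d}\;M$. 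This is the assertion with $c=\sqrt e$ (also valid for $d=1$, since $\sqrt e\ge 1$).

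Since the argument is short, there is no real obstacle beyond careful bookkeeping. The two points deserving attention are: (i) checking that every intermediate exponent $r_k$ stays in the admissible range $[1,\infty]$ and that the endpoint $r=\infty$ is allowed in~\eqref{eq:OptimalYoung} (it is, by the discussion following that display); and (ii) using the \emph{sharp} Young constant, since the entire $d^{-1/2}$ decay is produced by the single identity $C_s^{2d}=\tfrac1d(1+\tfrac1{d-1})^{d-1}$ --- replacing $C_s$ by any cruder bound would lose the rate entirely.
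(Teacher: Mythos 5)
Your argument is correct and follows the same route as the paper's: iterate the optimal Young inequality along the convolution with the single exponent $s=d/(d-1)$ on each factor, telescope the constants, and bound $\prod_i\|q_i\|_s$ by $\max_i\|q_i\|_\infty$ via interpolation. The only (minor) improvement is that you make the constant explicit, $c=\sqrt{e}$, using the exact identity $C_s^{2d}=\tfrac1d(1+\tfrac1{d-1})^{d-1}\le e/d$, whereas the paper merely invokes the limit $\lim_{d\to\infty}\sqrt{d}\,C_{d/(d-1)}^d=\sqrt{e}$ to assert existence of some $c$.
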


\begin{rema}
The factor $d^{-1/2}$  in the inequality of Proposition~\ref{prop:SumPolynomialPDFboun}
above comes from~\eqref{eq:OptimalYoung}
and is crucial for the analysis in this paper. The standard Young's inequality for convolutions would only yield 
$\|Q_d\|_\infty\leq c \max_{i\leq d}\|q_i\|_\infty$, which gives insufficient control over $Q_d$. 
\end{rema}

\begin{proof}[Proof of Proposition~\ref{prop:SumPolynomialPDFboun}]
Since random variables $X_i$ are independent, the density of their sum is a
convolution of the respective densities, $Q_d=\Asterisk_{i=1}^d q_i$.
For all $i$ and each $t>1$ we have 
$q_i\in L^{\infty}(\RR)\cap L^{1}(\RR)\subset L^{t}(\RR)$. 
Moreover, the following inequality holds 
for every $k\leq d-1$:
\begin{equation}\label{eq:IndHypothesisYI}
\left\|Q_d\right\|_{\infty}=\left\|\Asterisk_{i=1}^d q_i\right\|_{\infty}\leq
\left(C_{\frac{d}{d-1}}\right)^kC_{\frac{d}{k}}\left(\prod_{i=1}^k\|q_i\|_{\frac{d}{d-1}}\right)\left\|\Asterisk_{i=k+1}^d
q_i\right\|_{\frac{d}{k}}
\end{equation}

We prove~\eqref{eq:IndHypothesisYI}  by induction on $k$. 
For $k=1$, note that 
$d$ and $\frac{d}{d-1}$ are H\"older conjugates, i.e. $1/d +1/(d/(d-1))=1$
Hence~\eqref{eq:OptimalYoung} for $r=\infty$, $q=d$, $p=d/(d-1)$, $f=q_1$
and 
$g=\Asterisk_{i=2}^d q_i$ implies
$\|Q_d\|_{\infty}\leq \|q_1\|_{\frac{d}{d-1}}\left\|\Asterisk_{i=2}^d q_i\right\|_{d}$ 
and $C_{\frac{d}{d-1}}=C^{-1}_d$.
Now assume~\eqref{eq:IndHypothesisYI} holds for some $k\leq d-2$. 
Since
$\left(d/(d-1)\right)^{-1}+\left(d/(k+1)\right)^{-1}=1+\left(d/k\right)^{-1}$,
the inequality in~\eqref{eq:OptimalYoung} implies $$\left\|\Asterisk_{i=k+1}^d q_i\right\|_{\frac{d}{k}}\leq
\frac{C_{\frac{d}{d-1}}C_{\frac{d}{k+1}}}{C_{\frac{d}{k}}}\|q_{k+1}\|_{\frac{d}{d-1}}\left\|\Asterisk_{i=k+2}^d q_i\right\|_{\frac{d}{k+1}}\text{.}$$
This inequality and the induction hypothesis (i.e.~\eqref{eq:IndHypothesisYI} for $k$) implies~\eqref{eq:IndHypothesisYI} for $k+1$.

Since $q_1$ is a density, we have $\|q_i\|_1=1$. Hence we find
$\|q_i\|_{\frac{d}{d-1}}\leq \|q_i\|_1^{\frac{d-1}{d}}\|q_i\|_{\infty}^{\frac{1}{d}}=\|q_i\|_{\infty}^{\frac{1}{d}}$
for each $i$, and in particular
$\prod_{i=1}^d\|q_i\|_{\frac{d}{d-1}}\leq \max_{i\leq d}\left(\|q_i\|_\infty\right)$.
By~\eqref{eq:IndHypothesisYI} for $k=d-1$ we get	
$$\left\|Q_d\right\|_{\infty}\leq \left(C_{\frac{d}{d-1}}\right)^d\prod_{i=1}^d\|q_i\|_{\frac{d}{d-1}}\leq 
\max_{i\leq d}\left(\|q_i\|_\infty\right)\left(C_{\frac{d}{d-1}}\right)^d.$$
Since
 $\lim_{d\to\infty}\sqrt{d}\left(C_{\frac{d}{d-1}}\right)^d=\sqrt{e}$, 
there exists $c>0$ such that 
$\left(C_{\frac{d}{d-1}}\right)^d\leq c/ \sqrt{d}$ for all $d\in\NN$. 
\end{proof}
Polynomials of continuous random variables play an important role in the proofs of Section~\ref{sec:proofs}.

\begin{prop}\label{prop:p(N)pdf}
Let $X$ be a continuous random variable and $p$ a polynomial. Then the random variable $p(X)$ has a density.
\end{prop}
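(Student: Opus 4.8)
The plan is to show that the law of $p(X)$ is absolutely continuous with respect to Lebesgue measure $\Leb$ on $\RR$; since $\Leb$ is $\sigma$-finite and the law of $p(X)$ is a probability measure, the Radon--Nikodym theorem then provides the density. We may assume throughout that $p$ is non-constant, say of degree $n\in\NN$ (this is the only case relevant to the applications in Section~\ref{sec:proofs}, e.g.\ where $p$ is the cubic appearing in~\eqref{eq:Mdef}; when $\deg p=0$ the variable $p(X)$ is a.s.\ constant).

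The first step is the standard reduction: a probability measure on $\RR$ is absolutely continuous with respect to $\Leb$ if and only if it assigns zero mass to every Lebesgue-null Borel set. As $X$ has a density, for any Borel set $B$ with $\Leb(B)=0$ we have $\mathbb{P}[p(X)\in B]=\mathbb{P}[X\in p^{-1}(B)]=0$ provided $\Leb(p^{-1}(B))=0$. Hence it suffices to prove that $p$ pulls Lebesgue-null sets back to Lebesgue-null sets.

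To establish this I would partition $\RR$ using the zeros of $p'$. The derivative $p'$ is a polynomial of degree $n-1$, so it has finitely many real zeros $z_1<\dots<z_k$ (with $k\ge 0$); on each of the finitely many open intervals $I$ cut out by these points, $p'$ has constant sign, so $p|_I$ is a strictly monotone $\mathcal{C}^1$ bijection onto an open interval $p(I)$ with nowhere-vanishing derivative, and by the inverse function theorem $(p|_I)^{-1}\colon p(I)\to I$ is $\mathcal{C}^1$. A $\mathcal{C}^1$ map on an interval is locally Lipschitz and therefore sends Lebesgue-null sets to Lebesgue-null sets (exhaust the domain by compact subintervals and use that a Lipschitz map $g$ satisfies $\Leb(g(E))\le \mathrm{Lip}(g)\,\Leb(E)$). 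Thus, for $B$ with $\Leb(B)=0$, the set $p^{-1}(B)\cap I=(p|_I)^{-1}(B\cap p(I))$ is Lebesgue-null. Since $p^{-1}(B)$ is the union of these finitely many null sets together with $p^{-1}(B)\cap\{z_1,\dots,z_k\}$, it is Lebesgue-null, which finishes the proof.

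The argument is essentially routine; the one point that genuinely carries it is the measure-theoretic fact that a locally Lipschitz (in particular $\mathcal{C}^1$) map between intervals maps Lebesgue-null sets to Lebesgue-null sets, applied to the local inverses $(p|_I)^{-1}$. Everything else is bookkeeping: counting the finitely many zeros of $p'$ and patching together the resulting pieces.
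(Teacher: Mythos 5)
Your proof is correct, and it takes a genuinely more measure-theoretic route than the paper. The paper argues at the level of the distribution function: it partitions $\RR$ at the image $B=p\bigl((p')^{-1}(\{0\})\bigr)$ of the critical points, invokes the inverse function theorem to conclude that $x\mapsto\mathbb{P}[p(X)\leq x]$ is differentiable for $x\notin B$, and leaves implicit the (true, but not automatic) passage from ``differentiable outside a finite set'' to ``has a density''. You instead verify absolute continuity of the pushforward law directly, by showing that $p$ pulls Lebesgue-null sets back to Lebesgue-null sets: you cut $\RR$ at the zeros of $p'$, note that each local inverse $(p|_I)^{-1}$ is $\mathcal{C}^1$ and hence locally Lipschitz, and use that locally Lipschitz maps send null sets to null sets. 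Both proofs rest on the same two observations (finitely many critical points of a non-constant polynomial; local invertibility away from them), but yours makes the measure-theoretic conclusion explicit via the Radon--Nikodym theorem and in that sense is tighter — it sidesteps the potential objection that a monotone continuous function differentiable a.e.\ need not in general be absolutely continuous. The cost is a slightly longer argument and the need to quote the null-set-preservation property of Lipschitz maps, whereas the paper's version is shorter and matches the style of Proposition~\ref{prop:PolynomialPDFbound}, which explicitly computes the density via the change-of-variables formula.
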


\begin{proof}
The set
$B:=p\left((p')^{-1}\left(\{0\}\right)\right)$ has finitely many points.
Moreover, $p$ is locally invertible on $\RR\setminus B$ by the inverse function theorem
and the inverses are differentiable. Hence, for any $x\notin B$, the set $p^{-1}\left((-\infty,x]\right)$
is a disjoint union of intervals with boundaries that depend smoothly on $x$.
Since $\mathbb{P}[p(X)\leq x]=\mathbb{P}[X\in p^{-1}\left((-\infty,x]\right)]$, the proposition follows. 
\end{proof}

\begin{prop}\label{prop:PolynomialPDFbound} 
Let $N=N(\mu,\sigma^2)$ be a normal
random variable and $p$ a polynomial satisfying $\inf_{x\in\RR}|p'(x)|\geq c_p$ for some constant $c_p>0$.
Then the random variable $p(N)$ has a probability density function $q_{p(N)}$,
which satisfies $\|q_{p(N)}\|_\infty\leq (c_p\sigma\sqrt{2\pi})^{-1}$.
\end{prop}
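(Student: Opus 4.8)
The plan is to use the hypothesis $\inf_{x\in\RR}|p'(x)|\ge c_p>0$ to show that $p$ is a diffeomorphism of $\RR$, and then obtain the density of $p(N)$ by a one–dimensional change of variables.

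First I would note that $p'$ is continuous and nowhere zero, hence of constant sign on $\RR$. Replacing $p$ by $-p$ if necessary --- this affects neither the hypothesis nor $\|q_{p(N)}\|_\infty$, since if $p(N)$ has density $q$ then $-p(N)$ has density $y\mapsto q(-y)$ with the same supremum --- we may assume $p'(x)\ge c_p$ for all $x\in\RR$. Then $p$ is strictly increasing, and since $|p(x)-p(0)|=\left|\int_0^x p'(t)\,dt\right|\ge c_p|x|\to\infty$ as $|x|\to\infty$, the map $p\colon\RR\to\RR$ is a continuous strictly increasing bijection. By the inverse function theorem its inverse $g:=p^{-1}$ is continuously differentiable with $g'(y)=1/p'(g(y))$, so $0<g'(y)\le 1/c_p$ for every $y\in\RR$.

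Next I would carry out the change of variables. Let $\phi(x):=(\sigma\sqrt{2\pi})^{-1}\exp(-(x-\mu)^2/(2\sigma^2))$ denote the density of $N$. For any Borel set $B\subseteq\RR$ we have $\mathbb{P}[p(N)\in B]=\mathbb{P}[N\in g(B)]=\int_{g(B)}\phi(x)\,dx=\int_B\phi(g(y))\,g'(y)\,dy$, the last step being the change-of-variables formula for the increasing $\mathcal{C}^1$ bijection $g$. Hence $p(N)$ admits the density $q_{p(N)}(y)=\phi(g(y))\,g'(y)$ (consistent with Proposition~\ref{prop:p(N)pdf}). Finally, the claimed bound is immediate: $\phi(g(y))\le\sup_{x\in\RR}\phi(x)=(\sigma\sqrt{2\pi})^{-1}$ and $g'(y)\le 1/c_p$, so $\|q_{p(N)}\|_\infty\le(c_p\sigma\sqrt{2\pi})^{-1}$.

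There is no substantive obstacle in this argument; the only point needing (minor) care is the justification that $p$ is a \emph{global} bijection of $\RR$ with $\mathcal{C}^1$ inverse, which is handled by the coercivity estimate $|p(x)-p(0)|\ge c_p|x|$ together with the inverse function theorem, after which the change-of-variables identity and the two elementary bounds finish the proof.
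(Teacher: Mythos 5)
Your proof is correct and follows essentially the same route as the paper's: identify $p$ as a strictly monotone bijection of $\RR$, write the density of $p(N)$ via the one–dimensional change of variables as $q_N(p^{-1}(y))/|p'(p^{-1}(y))|$, and bound each factor. You supply slightly more detail than the paper (the WLOG reduction to $p'>0$ and the coercivity estimate $|p(x)-p(0)|\ge c_p|x|$ justifying surjectivity), but the argument and the key formula are identical.
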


\begin{proof} Obviously, $p$ is strictly monotonic and thus a bijection.
Moreover, the distribution $\Phi_{p(N)}(\cdot)$ of $p(N)$ takes the form 
$\mathbb{P}\left[N\leq p^{-1}(\cdot)\right]$ or $\mathbb{P}\left[N>p^{-1}(\cdot)\right]$.
Hence, for any $x\in\RR$,  the density $q_{p(N)}$ of $p(N)$ satisfies
$q_{p(N)}(x)=q_{N}\left(p^{-1}(x)\right)\left|\left(p^{-1}\right)'(x)\right|
=q_N\left(p^{-1}(x)\right)/\left|p'\left(p^{-1}(x)\right)\right|\leq 1/(c_p\sigma\sqrt{2\pi})$,
as the density of $N$, $q_N$, 
is bounded above by 
$(\sigma\sqrt{2\pi})^{-1}$.  
\end{proof}

\subsection{CFs and distributions of near normal random variables}\label{sec:CDFandCFBounds}

\begin{prop}\label{prop:Berry}
Let $N$ be a normal random variable with mean $\mu$ and variance $\sigma^2$
and $X$ a continuous random variable. Denote with $\varphi_X$,
$\varphi_N$ and $\Phi_X$, $\Phi_N$ the CFs and the distributions of $X$ and
$N$, respectively. Assume there exist constants $r>0$, $\gamma\in (0,1)$ and a function
$R\colon\RR\to\RR$ such that
$\left|\log\varphi_X(t)-\log\varphi_N(t)\right|\leq R(t)\leq \gamma\sigma^2
t^2/2$ holds on $|t|\leq r$.  Then 
$$\sup_{x\in\RR}\left|\Phi_{N}(x)-\Phi_{X}(x)\right|\leq
\int_{-r}^{r}\frac{R(t)}{\pi |t|}\exp\left(-\frac{(1-\gamma)\sigma^2t^2}{2}\right)dt+\frac{12\sqrt{2}}{ \pi^{3/2}\sigma r}\text{.}$$ 
\end{prop}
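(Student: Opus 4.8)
The plan is to deduce the bound from the classical Esseen smoothing inequality, combined with an elementary estimate that converts the hypothesis on the logarithms of the characteristic functions into a bound on the characteristic functions themselves.

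\emph{Step 1 (smoothing).} Since $\Phi_N$ is the distribution function of $N(\mu,\sigma^2)$, it is differentiable with $\sup_{x\in\RR}\Phi_N'(x)=1/(\sigma\sqrt{2\pi})$. I would apply the Esseen smoothing inequality (in the form requiring only that the ``reference'' distribution $\Phi_N$ have bounded density; no moment assumption on $X$ is needed, as $X$ is continuous and hence $\Phi_X$ is a bona fide distribution function with $\Phi_X-\Phi_N\to 0$ at $\pm\infty$) to $\Phi_X$ and $\Phi_N$ with window parameter equal to $r$. This gives
\[
\sup_{x\in\RR}\left|\Phi_N(x)-\Phi_X(x)\right|\le \frac{1}{\pi}\int_{-r}^{r}\left|\frac{\varphi_X(t)-\varphi_N(t)}{t}\right|\,dt+\frac{24}{\pi r}\cdot\frac{1}{\sigma\sqrt{2\pi}},
\]
and a direct computation shows $\frac{24}{\pi r}\cdot\frac{1}{\sigma\sqrt{2\pi}}=\frac{12\sqrt{2}}{\pi^{3/2}\sigma r}$, which is precisely the second term in the claimed bound.

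\emph{Step 2 (from $\log\varphi$ to $\varphi$).} The CF $\varphi_N(t)=e^{i\mu t-\sigma^2t^2/2}$ never vanishes, and $\varphi_X$ is nonvanishing on $[-r,r]$ (implicit in the assumption that $\log\varphi_X$ is finite there), so there are continuous branches of $\log\varphi_N$ and $\log\varphi_X$ on $[-r,r]$ vanishing at $0$, whence $\varphi_X(t)=\varphi_N(t)\exp\big(\log\varphi_X(t)-\log\varphi_N(t)\big)$. Using the elementary inequality $|e^z-1|\le|z|e^{|z|}$ for $z\in\mathbb{C}$, together with $|\varphi_N(t)|=e^{-\sigma^2t^2/2}$ and the hypothesis $|\log\varphi_X(t)-\log\varphi_N(t)|\le R(t)\le\gamma\sigma^2t^2/2$, I get for $|t|\le r$
\[
\left|\varphi_X(t)-\varphi_N(t)\right|\le e^{-\sigma^2t^2/2}\,R(t)\,e^{R(t)}\le R(t)\,e^{-(1-\gamma)\sigma^2t^2/2}.
\]
Dividing by $|t|$ (the right-hand side is integrable near $0$ because $R(t)/|t|\le\gamma\sigma^2|t|/2$) and integrating over $[-r,r]$ bounds the first term in the display of Step 1 by $\int_{-r}^{r}\frac{R(t)}{\pi|t|}\exp\!\big(-(1-\gamma)\sigma^2t^2/2\big)\,dt$. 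Combining this with Step 1 yields the proposition.

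I do not expect a serious obstacle here: this is the standard Berry--Esseen smoothing argument. The only points needing a little care are the choice of the version of the smoothing inequality (so that no moment hypothesis on $X$ is imposed) and the constant bookkeeping matching $\frac{24}{\pi r\sigma\sqrt{2\pi}}$ to $\frac{12\sqrt2}{\pi^{3/2}\sigma r}$. The one genuinely useful idea is in Step 2: passing through the logarithms lets the Gaussian factor $e^{-\sigma^2 t^2/2}$ absorb the potentially large multiplier $e^{R(t)}$, leaving the Gaussian-type decay $e^{-(1-\gamma)\sigma^2 t^2/2}$ that makes the integral term small (and integrable at the origin).
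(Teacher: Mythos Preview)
Your proposal is correct and follows essentially the same approach as the paper: apply the Esseen smoothing inequality (the paper cites \cite[Theorem~2.5.2]{Kolassa}) to obtain the same display with the $24\sup_x\Phi_N'(x)/(\pi r)$ term, then use $|e^z-1|\le|z|e^{|z|}$ with $z=\log\varphi_X(t)-\log\varphi_N(t)$ together with $|\varphi_N(t)|=e^{-\sigma^2t^2/2}$ and $R(t)\le\gamma\sigma^2t^2/2$ to bound the integrand. Your added remarks on branches of the logarithm and integrability near the origin are slightly more careful than the paper's version, but the argument is the same.
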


\begin{rema}
The result is a direct consequence of the Smoothing theorem (see \cite[Theorem~2.5.2]{Kolassa}) commonly used to prove Berry-Esseen-type bounds, that relate CFs and distribution functions of random variables. 
\end{rema}

\begin{proof}
The Smoothing theorem implies 
$$\sup_{x\in\RR}\left|\Phi_{N}(x)-\Phi_{X}(x)\right|
\leq\int_{-r}^{r}\left|\varphi_{N}(t)-\varphi_X(t)\right|/(\pi|t|)dt+24\sup_{x\in\RR}|\Phi_N'(x)|/(\pi
r).$$ 
Note that, for any $z\in\mathbb{C}$, it holds 
$|e^z-1|\leq |z|e^{|z|}$. For 
$z:=\log(\varphi_X(t)/\varphi_N(t))$, this implies
$$|\varphi_X(t)-\varphi_N(t)|\leq |\varphi_N(t)| 
|\log \varphi_X(t)-\log \varphi_N(t)| \exp(|\log \varphi_X(t)-\log \varphi_N(t)|) \qquad \forall t\in\RR.
$$
The result follows from this inequality, $\sup_{x\in\RR}|\Phi_N'(x)|=1/(\sigma \sqrt{2\pi})$ and $|\varphi_N(t)|=e^{-\sigma^2t^2/2}$:
\begin{multline}
\int_{-r}^{r}{\frac{\left|\varphi_{N}(t)-\varphi_{X}(t)\right|}{\pi |t|}dt}\leq
\int_{-r}^{r}{|\varphi_{N}(t)|\frac{R(t)}{\pi |t|}e^{R(t)}dt} 
\leq \int_{-r}^{r}\frac{R(t)}{\pi
|t|}\exp\left(-\frac{(1-\gamma)\sigma^2t^2}{2}\right)dt\text{.}\nonumber
\end{multline}

\end{proof}

\begin{lem}\label{prop:NearNormalChar}
Let $X$ be random variable with finite mean $\mu$, variance $\sigma^2$ and
absolute third central moment
$\kappa:=\mathbb{E}\left[\left|X-\mu\right|^3\right]$. Then, the characteristic
function $\varphi_X$ of $X$ satisfies: $$\left|\log\varphi_X(t)-\left(i\mu
t-\frac{\sigma^2}{2}t^2\right)\right|\leq
\frac{\kappa|t|^3}{6}+\frac{\sigma^4t^4}{4}\quad\forall
t\in\left[-\frac{1}{\sigma},\frac{1}{\sigma}\right]\text{.}$$
\end{lem}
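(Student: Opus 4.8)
The plan is to reduce to the centered case and then compose a Taylor expansion of $\varphi_X$ at the point $1$ with the principal branch of the logarithm. First I would note that $\varphi_X(t)=e^{i\mu t}\varphi_{X-\mu}(t)$, hence for the continuous branch vanishing at $t=0$ we have $\log\varphi_X(t)=i\mu t+\log\varphi_{X-\mu}(t)$, so the asserted inequality is equivalent to $\bigl|\log\varphi_{X-\mu}(t)+\tfrac{\sigma^2}{2}t^2\bigr|\le \tfrac{\kappa|t|^3}{6}+\tfrac{\sigma^4 t^4}{4}$. Thus we may and do assume $\mathbb{E}[X]=0$, so that $\mathbb{E}[X^2]=\sigma^2$ and $\mathbb{E}[|X|^3]=\kappa$.

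Next I would record two elementary Taylor estimates for $u\mapsto e^{iu}$, obtained by bounding the third (resp. second) derivative by $1$ in Taylor's theorem: $|e^{iu}-1-iu+u^2/2|\le |u|^3/6$ and $|e^{iu}-1-iu|\le u^2/2$ for all $u\in\RR$. Set $w(t):=\varphi_X(t)-1=\mathbb{E}\bigl[e^{itX}-1-itX\bigr]$, using $\mathbb{E}[X]=0$. Taking expectations in the first estimate and writing $r(t):=w(t)+\tfrac{\sigma^2}{2}t^2$ gives $|r(t)|\le \tfrac{|t|^3}{6}\mathbb{E}[|X|^3]=\tfrac{\kappa|t|^3}{6}$; taking expectations in the second gives the key bound $|w(t)|\le \tfrac{t^2}{2}\mathbb{E}[X^2]=\tfrac{\sigma^2 t^2}{2}\le \tfrac12$ for $|t|\le 1/\sigma$. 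In particular $\varphi_X(t)$ lies in the disc $|z-1|\le 1/2$, so it never vanishes and $\log\varphi_X(t)=\log(1+w(t))$ with the principal logarithm, consistent with the continuous branch.

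Finally, I would write $\log\varphi_X(t)+\tfrac{\sigma^2}{2}t^2=\bigl(\log(1+w(t))-w(t)\bigr)+r(t)$ and bound the first term using the power series $\log(1+w)-w=\sum_{k\ge 2}(-1)^{k-1}w^k/k$: for $|w|\le 1/2$ this yields $|\log(1+w)-w|\le \tfrac12\sum_{k\ge 2}|w|^k=\tfrac{|w|^2}{2(1-|w|)}\le |w|^2$. Combining with $|w(t)|\le \sigma^2 t^2/2$ gives $|\log(1+w(t))-w(t)|\le \sigma^4 t^4/4$, and adding $|r(t)|\le \kappa|t|^3/6$ from the previous step finishes the proof; notice that the two contributions land exactly on the two terms of the claimed bound. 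The only point requiring care — and the place where a careless estimate would fail — is keeping $|w(t)|\le 1/2$ on the whole interval $|t|\le 1/\sigma$; the crude bound $|w(t)|\le \sigma|t|$ would only give $|w|\le 1$ and destroy the control of $\log(1+w)$ near the endpoints, so it is essential to exploit the first-moment cancellation to obtain the quadratic-in-$t$ bound on $|w(t)|$.
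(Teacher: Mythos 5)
Your argument is correct and uses exactly the two ingredients the paper's proof sketch relies on: the Taylor bound $\bigl|\mathbb{E}[e^{i(X-\mu)t}-\sum_{k\le n}(it)^k(X-\mu)^k/k!]\bigr|\le |t|^{n+1}\mathbb{E}|X-\mu|^{n+1}/(n+1)!$ (you use it for $n=1$ to control $w$ and $n=2$ to control $r$) and the inequality $|\log(1+z)-z|\le|z|^2$ on $|z|\le 1/2$. You have simply written out the details that the paper cites to Williams — the reduction to $\mu=0$, the choice of branch, the power-series proof of the log bound, and the key observation that the quadratic bound $|w(t)|\le\sigma^2t^2/2$ (not the cruder $\sigma|t|$) is what keeps $|w|\le 1/2$ on the whole interval.
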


\begin{proof}
The result can be established by combining the elementary bound 
$$\left|\mathbb{E}\left[e^{i(X-\mu)t}-\sum_{k=0}^n\frac{(it)^n}{n!}(X-\mu)^n\right]\right|\leq \frac{|t|^{n+1}}{(n+1)!}\mathbb{E}\left[\left|X-\mu\right|^{n+1}\right]\quad\forall t\in\RR$$
and the fact that $z\in\mathbb{C}$, $|z|\leq 1/2$ implies
$\left|(\log(1+z)-z\right|\leq |z|^2$ (see \cite[p.~188]{Williams} for both).

\end{proof}

\begin{lem}\label{lemma:CharaN+bN^2}
Let $N=N(0,\sigma^2)$ and let $u,v\in\RR$. The random variable $uN+vN^2$ has a characteristic function that satisfies
$$\left|\log\varphi_{uN+vN^2}(t)-\left(iv\sigma^2t-\frac{u^2\sigma^2}{2}t^2\right)\right|\leq 2v^2\sigma^4t^2+2u^2|v|\sigma^4|t|^3\quad\forall t\in\left[-\frac{1}{4|v|\sigma^2},\frac{1}{4|v|\sigma^2}\right]\text{.}$$
\end{lem}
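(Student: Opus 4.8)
The plan is to compute $\varphi_{uN+vN^2}$ in closed form via a complex Gaussian integral and then expand the resulting logarithm. Writing out the density of $N=N(0,\sigma^2)$,
$$\varphi_{uN+vN^2}(t)=\frac{1}{\sigma\sqrt{2\pi}}\int_{\RR}\exp\left(-\left(\frac{1}{2\sigma^2}-itv\right)x^2+itux\right)dx.$$
Completing the square in the exponent and using the identity $\int_{\RR}e^{-ax^2}dx=\sqrt{\pi/a}$ (valid for any $a\in\mathbb{C}$ with $\mathrm{Re}(a)>0$, justified by shifting the contour back to the real axis; here $\mathrm{Re}(1/(2\sigma^2)-itv)=1/(2\sigma^2)>0$), one obtains
$$\varphi_{uN+vN^2}(t)=\frac{1}{\sqrt{1-2itv\sigma^2}}\exp\left(-\frac{t^2u^2\sigma^2}{2(1-2itv\sigma^2)}\right),$$
where $\sqrt{\cdot}$ and $\log(\cdot)$ below denote the principal branches. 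Since $1-2itv\sigma^2$ has real part $1$ for every $t\in\RR$, it never meets the branch cut, so these are exactly the continuous branches along the path and vanish at $t=0$; hence the continuous logarithm of the characteristic function (the one appearing in the statement) equals
$$\log\varphi_{uN+vN^2}(t)=-\tfrac12\log(1-2itv\sigma^2)-\frac{t^2u^2\sigma^2}{2(1-2itv\sigma^2)}.$$

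Next I would set $z:=2itv\sigma^2$, so $|z|=2|v|\sigma^2|t|\le\tfrac12$ precisely on the interval $|t|\le 1/(4|v|\sigma^2)$ of the lemma (if $v=0$ the statement is trivial: $uN+vN^2=uN\sim N(0,u^2\sigma^2)$ and both sides vanish identically, with the interval being all of $\RR$). For the first term write $-\tfrac12\log(1-z)=\tfrac12 z+\bigl(-\tfrac12\log(1-z)-\tfrac12 z\bigr)$, noting $\tfrac12 z=iv\sigma^2 t$ is exactly the linear part of the target expression. Applying the elementary bound $|\log(1+w)-w|\le|w|^2$ for $|w|\le\tfrac12$ (the same estimate used in the proof of Lemma~\ref{prop:NearNormalChar}, cf.~\cite[p.~188]{Williams}) with $w=-z$ gives $\bigl|-\tfrac12\log(1-z)-iv\sigma^2 t\bigr|\le\tfrac12|z|^2=2v^2\sigma^4 t^2$, the first term of the asserted bound.

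For the second term, compare $-\dfrac{t^2u^2\sigma^2}{2(1-z)}$ with $-\dfrac{u^2\sigma^2}{2}t^2$: their difference equals $-\dfrac{t^2u^2\sigma^2}{2}\cdot\dfrac{z}{1-z}$, and since $|1-z|\ge 1-|z|\ge\tfrac12$ we have $\bigl|\tfrac{z}{1-z}\bigr|\le 2|z|=4|v|\sigma^2|t|$, whence $\Bigl|-\dfrac{t^2u^2\sigma^2}{2(1-z)}+\dfrac{u^2\sigma^2}{2}t^2\Bigr|\le 2u^2|v|\sigma^4|t|^3$, the second term. Combining the two estimates by the triangle inequality finishes the proof. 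The only points that are not completely mechanical are the justification of the complex Gaussian integral formula and the verification that the principal branches used in the closed form give the correct continuous branch of $\log\varphi_{uN+vN^2}$; both are standard, so I do not anticipate a genuine obstacle here.
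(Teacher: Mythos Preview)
Your proof is correct and follows essentially the same approach as the paper's: compute the characteristic function in closed form as $\varphi_{uN+vN^2}(t)=(1-2iv\sigma^2t)^{-1/2}\exp\bigl(-\tfrac{u^2\sigma^2t^2}{2(1-2iv\sigma^2t)}\bigr)$, then set $z=2itv\sigma^2$ and control the two terms of $\log\varphi$ via the same elementary inequalities $|\log(1+w)-w|\le|w|^2$ and $|1/(1-z)-1|\le 2|z|$ for $|z|\le\tfrac12$. The paper merely sketches these steps, whereas you have written them out in full; there is no substantive difference.
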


\begin{proof}
The CF $\varphi_{uN+vN^2}$ can be explicitly computed using standard complex analysis
$$\varphi_{uN+vN^2}(t)=\mathbb{E}\left[e^{i(uN+vN^2)t}\right]=\frac{1}{\sqrt{1-2iv\sigma^2t}}\exp\left(-\frac{u^2\sigma^2t^2}{2(1-2iv\sigma^2t)}\right)\quad \forall t\in\RR\text{.}$$
The rest can then be shown using the elementary inequalities: $z\in\mathbb{C}$,
$|z|\leq 1/2$ implies $\left|(\log(1+z)-z\right|\leq |z|^2$ and
$\left|1/(1-z)-1\right|\leq 2|z|$.  
\end{proof}

\bibliographystyle{alpha}
\bibliography{Bibliography}

\end{document}